\theoremstyle{plain}
\newtheorem{thm}{Theorem}[section]
\newtheorem*{thm*}{Theorem}
\theoremstyle{plain}
\newtheorem{lem}[thm]{Lemma}
\theoremstyle{plain}
\newtheorem{prop}[thm]{Proposition}
\theoremstyle{plain}
\newtheorem{example}[thm]{Example}
\newtheorem{cor}[thm]{Corollary}
\theoremstyle{definition}
\theoremstyle{remark}
\newtheorem{rem}[thm]{Remark}
\DeclareMathAlphabet{\mathpzc}{OT1}{pzc}{m}{it}
\newcommand{\dirac}{\mbox{$\mathcal{D}\!\!\!\!\!\:/\!\;$}}
\newcommand{\spinor}{\mbox{$S\!\!\!\!\!\:/\;\!$}}
\newcommand{\bundle}[1]{\CMcal{#1}}
\newcommand{\R}{\mathbbm{R}}
\newcommand{\C}{\mathbbm{C}}
\newcommand{\Q}{\mathbbm{Q}}
\newcommand{\F}{\mathbbm{F}}
\renewcommand{\H}{\mathscr{H}}
\newcommand{\Z}{\mathbbm{Z}}
\newcommand{\ke}{\mathpzc{k}}
\newcommand{\Di}{D}
\newcommand{\mlambda}{\lambda}
\begin{document}
\title{Relative K--area homology and applications}
\author{Mario Listing}

\begin{abstract}
We show an Uhlenbeck type estimate for closed simply connected manifolds which provides the existence of certain exact sequences in  K--area homology. This leads to the behavior of the K--area homology under surgery. Moreover, we give an index theoretic obstruction to positive scalar curvature on compact spin manifolds with boundary.
\end{abstract}
\keywords{Uhlenbeck estimate, K--area homology, surgery, scalar curvature}
\subjclass[2010]{55N35 (57R19, 57R20, 53C23)}
\maketitle

\section{Introduction}

We introduced in \cite{List10,habil} the notion of K--area homology  on the category  pairs of compact smooth manifolds and continuous maps, and proved that $\H _*$ is a functor on this category which satisfies the Eilenberg--Steenrod axioms up to the existence of long exact homology sequences. The following theorem proves the existence of \emph{very short} exact homology sequences if the submanifold has a finite fundamental group. In order to simplify notations we state the results in the introduction only for integral coefficients. However, the following theorem holds for arbitrary coefficients if the path components of $N$ are simply connected.
\begin{thm}
\label{main_thm}
Let $M^n$ be a compact manifold and $N\stackrel{i}\hookrightarrow M$ be a closed submanifold with finite fundamental group on all components, then the long sequence
\[
\stackrel{\partial _*}\longrightarrow \H _k(N )\stackrel{i_*}\longrightarrow\H _k(M  )\stackrel{j_*}\longrightarrow \H _k(M,N )\stackrel{\partial _*}\longrightarrow \H _{k-1}(N )\stackrel{i_*}\longrightarrow 
\]
is well defined with $\mathrm{Im}(i_*)=\ker (j_*)$, $\mathrm{Im}(j_*)= \ker (\partial _*)$ and $\mathrm{Im}(\partial _*)\subseteq \ker (i_*)$. 
\end{thm}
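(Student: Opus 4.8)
The plan is to derive the stated complex from the ordinary long exact homology sequence of the pair $(M,N)$ and then to establish exactness at $\H_k(M)$ and at $\H_k(M,N)$ by means of the Uhlenbeck--type estimate, applied over the finite covering spaces of the components of $N$. Recall from \cite{List10,habil} that $\H_*$ is built from Hermitian bundles of small curvature and is related to singular homology by a natural transformation; in the model where $\H_k(X;\Lambda)$ is $H_k(X;\Lambda)$ modulo the functorial subgroup $\mathcal F_k(X;\Lambda)$ of classes of finite K--area (with the obvious relative version), $i_*$ and $j_*$ are the functorial maps and $\partial_*$ is induced by the singular boundary operator. Since $j_*i_*=0$, $\partial_*j_*=0$ and $i_*\partial_*=0$ already in singular homology, the sequence is a well--defined complex, and in particular $\mathrm{Im}(\partial_*)\subseteq\ker(i_*)$, as soon as one knows the compatibility $\partial_*(\mathcal F_k(M,N))\subseteq\mathcal F_{k-1}(N)$; a routine diagram chase in the quotient of an exact sequence by a subgroup then reduces the two remaining equalities to: \emph{(a)} every relative class of finite K--area lying in the image of $j_*$ is the image of an absolute class of finite K--area; and \emph{(b)} every class of $N$ of finite K--area lying in the image of $\partial_*$ is $\partial_*$ of a relative class of finite K--area.

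The crux is the following consequence of the Uhlenbeck--type estimate. Each component $N_\alpha$ of $N$ is closed with finite fundamental group, so its universal cover $\widetilde N_\alpha$ is closed and simply connected; hence there is $\varepsilon>0$ such that every Hermitian bundle $E\to M$ admitting a connection of curvature norm less than $\varepsilon$ becomes, after replacing $E$ by a direct sum $E^{\oplus\ell}\oplus\C^q$ (which alters neither the curvature norm nor, up to a nonzero integer multiple, the pairing of the Chern character with any homology class), trivial over $N$ together with a connection $C^0$--close to the product connection. Indeed, applying the estimate on the $\widetilde N_\alpha$ yields triviality with a near--flat connection upstairs; the rational characteristic classes of $E|_N$ then vanish by the transfer for the finite coverings $\widetilde N_\alpha\to N_\alpha$, and the residual torsion in $\widetilde K(N)$ is annihilated by passing to a suitable multiple, no torsion correction being needed when the components of $N$ are simply connected, which is precisely why the statement then holds for arbitrary coefficients. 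Consequently every small--curvature test bundle over $M$ restricts trivially to $N$ and defines a relative test object for $(M,N)$ that pairs with relative cycles through the same Chern character; feeding this into \emph{(a)} and \emph{(b)}, together with the same comparison applied with $N$ in place of $M$ to control the K--area carried by $N$ and its Chern--Simons type secondary invariants, yields the two exactness statements, and the compatibility $\partial_*(\mathcal F_k(M,N))\subseteq\mathcal F_{k-1}(N)$ drops out of the same argument.

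I expect the main obstacle to be this crux, namely converting the analytic Uhlenbeck--type estimate, which is genuinely available only on the closed simply connected covers, into a clean statement over $N$ itself. One has to carry out the descent through the finite coverings without creating characteristic numbers in passing from $\widetilde N_\alpha$ back to $N_\alpha$; handle the torsion bookkeeping via the passage to multiples (and see why the clean all--coefficients version forces the components of $N$ to be simply connected rather than merely of finite fundamental group); and, most delicately, prove the relative Chern--Weil/Chern--Simons comparison showing that a trivialization of $E$ over $N$ yields a relative test object pairing correctly with the given relative cycle, i.e.\ that the relative K--area of $(M,N)$, computed with bundles trivialized over $N$, agrees with the absolute K--area over $M$ in the relevant range, the secondary contributions over $N$ being killed because near--flat, near--product connections differ by near--constant gauge transformations. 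Granting the crux, the remainder is a diagram chase in the quotient of the singular long exact sequence of $(M,N)$ by the subgroups of classes of finite K--area, which now behave well enough at the terms $\H_k(M)$ and $\H_k(M,N)$, though, as the one--sided inclusion $\mathrm{Im}(\partial_*)\subseteq\ker(i_*)$ reflects, not necessarily at the terms $\H_k(N)$.
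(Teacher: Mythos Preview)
Your proposal rests on a mistaken reading of the definition: in this paper $\H_k(X;G)$ is \emph{not} the quotient $H_k(X;G)/\mathcal F_k(X;G)$ but is \emph{equal to} the subgroup of classes of finite K--area. Thus for closed $N$ with finite fundamental group one has $\H_k(N)=H_k(N)$ for all $k>0$, not $\H_k(N)=0$. This reverses the logic of the diagram chase. Well-definedness of $\partial_*:\H_k(M,N)\to\H_{k-1}(N)$ is automatic for $k\geq 2$ precisely because $\H_{k-1}(N)=H_{k-1}(N)$ (the case $k=1$ needs a separate small argument via $\H_1(M,N)\subseteq\mathrm{Tor}(H_1(M,N))$), and exactness at $\H_k(M)$ is immediate from singular exactness for the same reason: if $\theta\in\H_k(M)$ has $j_*\theta=0$, then $\theta=i_*\eta$ with $\eta\in H_k(N)=\H_k(N)$. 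Your statement~(b) and the compatibility $\partial_*(\mathcal F_k(M,N))\subseteq\mathcal F_{k-1}(N)$ are artefacts of the quotient model and play no role here.

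The one nontrivial step is exactly your~(a), namely $\ker(\partial_*)\subseteq\mathrm{Im}(j_*)$ inside $\H_k(M,N)$: given $\theta\in\H_k(M,N)$ with $\partial_*\theta=0$, singular exactness yields $\eta\in H_k(M)$ with $j_*\eta=\theta$, and one must show $\eta$ has finite K--area. Your analytic crux is the right idea, but the paper's execution is more direct than the Chern--Simons comparison you sketch. One takes a small-curvature bundle $(\bundle{E},\nabla)$ on $M$ detecting $\eta$, pulls it back along a map $f\simeq\mathrm{id}_M$ collapsing a tubular neighbourhood onto $N$, applies the Uhlenbeck estimate on the universal covers of the components of $N$ (descending via $p_!p^*\bundle{E}\cong\bigoplus^m\bundle{E}$ to handle finite $\pi_1$) to produce a trivialization there with $\|A\|\leq C\|R^\nabla\|$, and then cuts off the connection form $A$ to make the connection literally trivial near $N$ while keeping the curvature bounded by a fixed multiple of $\|R^\nabla\|$. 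No secondary invariants enter; one obtains directly $\ke(M;\eta)\leq\mathfrak c\cdot\ke(M,N;j_*\eta)$. The passage to $\bigoplus^m\bundle{E}$ is why, for nontrivial finite $\pi_1(N)$, the argument runs through the Chern character and hence through $\Q$ (equivalently $\Z$) coefficients, and fails for general coefficients---your intuition about torsion bookkeeping is correct, but the mechanism is this stabilization rather than anything involving $\widetilde K(N)$ or relative Chern--Weil theory.
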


This theorem is more useful than it seems at first glance. In particular, it provides the behavior of the K--area homology under surgery. The assumption $| \pi _1(N,\, .\, )|<\infty $ is essential, counterexamples will be given in section \ref{sect4}. As usual the homomorphism $j_*$ is induced from $j:M\hookrightarrow (M,N)$ and $\partial _*$ is the connecting homomorphism from singular homology restricted to $\H _k(M,N)\subseteq H_k(M,N)$. The proof of the following corollaries are standard exercises in algebraic topology and will be left to the reader.

\begin{cor}
If $M'$ is obtained from $M^n$ by surgery in dimension $m\notin \{ 1,n-2\} $, then $\H _k(M')=\H _k(M)$ holds for all $k\not \in \{ m,m+1,n-m-1,n-m\} $. 
\end{cor}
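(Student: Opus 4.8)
The plan is to present $M$ and $M'$ as obtained from one common compact manifold with boundary by attaching complementary handles, and then to feed the two resulting pairs into Theorem~\ref{main_thm} together with excision.

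First I would fix the surgery data: an embedding $S^m\times D^{n-m}\hookrightarrow M$, and set $W:=M\setminus (S^m\times\mathring D^{n-m})$, a compact manifold with $\partial W=P:=S^m\times S^{n-m-1}$. Writing $U:=S^m\times D^{n-m}$ and $U':=D^{m+1}\times S^{n-m-1}$, one has $M=W\cup_P U$ and $M'=W\cup_P U'$, the gluings identifying $\partial U\cong P\cong\partial U'$. The handle $U$ is a compact submanifold homotopy equivalent to $S^m$, hence has finite fundamental group on each component precisely because $m\neq 1$; symmetrically $U'\simeq S^{n-m-1}$ has finite fundamental group on each component precisely because $m\neq n-2$. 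These are exactly the two excluded values of $m$, and they are what makes Theorem~\ref{main_thm} applicable to the pairs $(M,U)$ and $(M',U')$.

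Next I would identify the relative groups by excision. Shrinking the tube to $S^m\times D^{n-m}_{1/2}$ and excising its interior from $(M,U)$ leaves a pair that deformation retracts, as a pair, onto $(W,P)$; doing the same on the $M'$ side gives
\[
 \H_k(M,U)\;\cong\;\H_k(W,P)\;\cong\;\H_k(M',U')\qquad(k\in\Z).
\]
It then remains to compare $\H_k(M)$ with $\H_k(M,U)$ and $\H_k(M')$ with $\H_k(M',U')$. Applying Theorem~\ref{main_thm} to $(M,U)$: since $\H_j(U)$ is a subgroup of $H_j(U)\cong H_j(S^m)$, we have $\H_j(U)=0$ for $j\notin\{0,m\}$, so in the partial long exact sequence
\[
 \H_k(U)\xrightarrow{i_*}\H_k(M)\xrightarrow{j_*}\H_k(M,U)\xrightarrow{\partial_*}\H_{k-1}(U)\xrightarrow{i_*}\H_{k-1}(M)
\]
one gets, for $k\geq 1$ with $k\notin\{m,m+1\}$: the term $\H_k(U)$ vanishes, so $\ker j_*=\mathrm{Im}\,i_*=0$ and $j_*$ is injective; and either $\H_{k-1}(U)=0$, or $k=1$ and then $\mathrm{Im}\,\partial_*\subseteq\ker\bigl(i_*\colon\H_0(U)\to\H_0(M)\bigr)=0$, so in both cases $\partial_*=0$ and $\mathrm{Im}\,j_*=\ker\partial_*=\H_k(M,U)$, i.e.\ $j_*$ is onto. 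Hence $j_*\colon\H_k(M)\xrightarrow{\ \cong\ }\H_k(M,U)$ for all $k\geq 1$ outside $\{m,m+1\}$; in degree $0$ the groups $\H_0(M)$ and $\H_0(M')$ are visibly isomorphic, since surgery in the dimensions that remain (there $2\le m\le n-3$) does not change the set of connected components. The identical argument for $(M',U')$ gives $\H_k(M')\cong\H_k(M',U')$ for $k\geq 1$ outside $\{n-m-1,n-m\}$. Stringing these isomorphisms together with the excision identification above yields $\H_k(M)\cong\H_k(M')$ whenever $k\notin\{m,m+1,n-m-1,n-m\}$.

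I expect the only genuinely delicate point to be keeping the excision step honest: the excision axiom is stated for removing a set from an interior, so one must work with the strictly smaller tube and check that the excised pair really is homotopy equivalent, \emph{as a pair}, to $(W,P)$ — and, relatedly, that the conclusion of Theorem~\ref{main_thm} is available for the pair $(M,U)$ with $U$ a compact codimension-zero submanifold with boundary (equivalently, that $\H_*(M,U)$ may be replaced by $\H_*(M,S^m)$ for the core sphere). Everything else is bookkeeping, and the appearance of exactly the four degrees $m,m+1,n-m-1,n-m$ is explained by the fact that these are the only degrees in which the (possibly nonzero) fundamental classes of the core spheres $S^m$ and $S^{n-m-1}$ can obstruct the injectivity or surjectivity of $j_*$.
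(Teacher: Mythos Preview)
Your argument is correct and is essentially the paper's own: excision gives $\H_k(M,S^m\times D^{n-m})\cong\H_k(M',D^{m+1}\times S^{n-m-1})$, and then Theorem~\ref{main_thm} on each side produces the isomorphism in the stated range. The paper resolves the ``delicate point'' you flag exactly as you suggest parenthetically, by passing to the core spheres via homotopy equivalence of pairs, $\H_k(M,S^m)=\H_k(M',S^{n-m-1})$, so that Theorem~\ref{main_thm} applies literally to closed submanifolds; note also that your separate handling of $k=0$ is unnecessary since $\H_0\equiv 0$.
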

We assume that surgery is done in the interior of $M$ respectively $M'$. If $M'$ is obtained from $M$ by a surgery along $S^m\subset M$ and $M$ is obtained from $M'$ by a surgery along $S^{n-m-1}\subset M'$, then excision shows 
\[
\H _k(M,S^m\times D^{n-m})=\H _k(M',D^{m+1}\times S^{n-m-1})
\]
for all $m$ and $k$. This implies $\H _k(M,S^m)=\H _k(M',S^{n-m-1})$ and hence, the last corollary follows immediately from the theorem. If $k\in \{ m,n-m-1\}$, then theorem \ref{main_thm} yields the relation between $\H _k(M)$ and $\H _k(M')$ if the maps $H_k(S^m)\to H_k(M)$ and $H_k(S^{n-m-1})\to H_k(M')$ are known explicitly. However, theorem \ref{main_thm} has its limits, because in general it is a nontrivial question to relate $\H _k(M')$ and $\H _k(M)$ if $k\in \{ m+1,n-m\} $. For instance, if $M^n_1$ and $M^n_2$ are connected orientable closed manifolds with $\H _n(M_1\# M_2)=0$, then theorem \ref{main_thm} can only show $\H _n(M_1\# M_2,S^{n-1})\neq \Z \oplus \Z $ for the connecting sphere $S^{n-1}\subset M_1\# M_2$, indeed $\H _4(T^4\# T^4,S^3)=0$ whereas $\H _{4}(T^4\# \C P^2,S^3)=\Z $.  The condition $m\notin \{ 1,n-2\}$ is even more essential because it is a highly nontrivial question to determine $\H _k(M)$ from $\H _k(M,S^1)$. 

\begin{cor}(Surgery in dimension $0$ and $n-1$)\\
Let $M^n$, $n\geq 3$, be a closed orientable manifold and $M'$ be obtained from $M$ by a $0$--dimensional surgery.
\begin{enumerate}
\item[a)] If $M$ and $M'$ have the same number of components, then $\H _k(M')=\H _k(M)$ holds for all $k\neq n-1$ and $\H _{n-1}(M')=\H _{n-1}(M)\oplus \Z $. 
\item[b)] If $M=M_0\coprod M_1$ and $M'=M_0\# M_1$ holds for connected manifolds $M_0$ and $M_1$, then $\H _k(M')=\H _k(M)$ is satisfied for all $k<n$ and $\H _n(M')=\Z $ holds if and only if $\H _n(M)=\Z \oplus \Z $.
\end{enumerate}

\end{cor}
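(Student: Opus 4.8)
The plan is to deduce both statements from Theorem~\ref{main_thm} applied to the two pairs $(M,S^0)$ and $(M',S^{n-1})$, where $S^0\subset M$ is the $0$--sphere along which the surgery is performed and $S^{n-1}\subset M'$ is the belt sphere of the resulting handle. Both $\pi_1(S^0)$ and, since $n\ge 3$, $\pi_1(S^{n-1})$ are trivial, so Theorem~\ref{main_thm} provides the two long sequences, and, as recorded in the discussion above, excision identifies $A_k:=\H_k(M,S^0)\cong\H_k(M',S^{n-1})$. Hence
\begin{gather*}
\cdots\to\H_k(S^0)\xrightarrow{i_*}\H_k(M)\xrightarrow{j_*}A_k\xrightarrow{\partial_*}\H_{k-1}(S^0)\to\cdots,\\
\cdots\to\H_k(S^{n-1})\xrightarrow{i_*}\H_k(M')\xrightarrow{j_*}A_k\xrightarrow{\partial_*}\H_{k-1}(S^{n-1})\to\cdots
\end{gather*}
with the exactness properties of Theorem~\ref{main_thm}, and it remains to insert the known groups $\H_*(S^0)$, $\H_*(S^{n-1})$ from \cite{List10,habil} and chase both diagrams.

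For $2\le k\le n-2$ all of $\H_k(S^0)$, $\H_{k-1}(S^0)$, $\H_k(S^{n-1})$, $\H_{k-1}(S^{n-1})$ vanish, so $j_*$ is an isomorphism in both sequences and $\H_k(M)\cong A_k\cong\H_k(M')$; this recovers the previous corollary in this range. In degree $0$ the claim is the evident behaviour of $\H_0$ under a $0$--surgery. In degree $1$, since $\H_1(S^0)=0$ and (as $n-1\ge 2$) $\H_1(S^{n-1})=0$, both $j_*\colon\H_1(M)\hookrightarrow A_1$ and $j_*\colon\H_1(M')\hookrightarrow A_1$ are injective, with images the kernels of $\partial_*\colon A_1\to\H_0(S^0)$ and $\partial_*\colon A_1\to\H_0(S^{n-1})$; both of these connecting maps vanish -- a low--degree instance of the finite--fundamental--group phenomenon behind Theorem~\ref{main_thm} (their targets are $\H_0$ of a sphere) -- so $\H_1(M')\cong A_1\cong\H_1(M)$.

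The real content lies in degrees $n-1$ and $n$. In degree $n-1$ the first sequence gives $\H_{n-1}(M)\cong A_{n-1}$ (the adjacent $S^0$--groups vanish), while the second, since $\partial_*\colon A_{n-1}\to\H_{n-2}(S^{n-1})=0$, yields a short exact sequence
\[
0\to\mathrm{Im}(i_*)\to\H_{n-1}(M')\xrightarrow{j_*}A_{n-1}\to 0,
\]
with $\mathrm{Im}(i_*)$ generated by the class of the belt sphere $S^{n-1}\subset M'$. In case (a) that class spans a free cyclic direct summand of $H_{n-1}(M')$, so $i_*$ is injective with $\mathrm{Im}(i_*)\cong\Z$, and the corresponding splitting of $H_{n-1}(M')$ restricts to $\H_{n-1}(M')$; thus $\H_{n-1}(M')\cong\H_{n-1}(M)\oplus\Z$, which together with the unchanged degrees proves (a). In case (b) the belt sphere bounds in $M_0\#M_1$, so $i_*=0$ and $\H_{n-1}(M')\cong A_{n-1}\cong\H_{n-1}(M)$. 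In degree $n$, the first sequence gives $\H_n(M)\cong A_n$ and the second $\H_n(M')\cong\ker\bigl(\partial_*\colon A_n\to\H_{n-1}(S^{n-1})=\Z\bigr)$. In case (a) this $\partial_*$ is $0$ (the belt sphere is nonzero in $H_{n-1}(M')$), so $\H_n(M')\cong\H_n(M)$, completing (a). In case (b), identifying $A_n\cong\H_n(M)=\H_n(M_0)\oplus\H_n(M_1)$ inside $\Z\langle[M_0]\rangle\oplus\Z\langle[M_1]\rangle$ and $\partial_*$ with the restriction of $(a,b)\mapsto a-b$, one gets $\H_n(M')\cong\H_n(M)\cap\Z\langle[M_0]+[M_1]\rangle$; since $\H_n$ of a connected closed orientable $n$--manifold is $0$ or $\Z$ (\cite{List10,habil}), this is $\cong\Z$ precisely when $\H_n(M_0)=\H_n(M_1)=\Z$, i.e.\ when $\H_n(M)=\Z\oplus\Z$, which is the remaining part of (b).

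The only step that is not purely formal is the identification of the maps $i_*$ and $\partial_*$. As $\H_*$ sits inside singular homology and these maps are restrictions of the singular ones, it comes down to the elementary singular--homology bookkeeping of a $0$--surgery: that the belt sphere spans a free cyclic direct summand of $H_{n-1}(M')$ in case (a) but bounds in case (b); that the excision isomorphism matches the relevant fundamental classes (with correct signs), so that $\partial_*\colon A_n\to\H_{n-1}(S^{n-1})$ is computed correctly; and that the degree--$(n-1)$ extension in case (a) splits over $\Z$. Granted these, the rest is the diagram chase that the authors leave to the reader.
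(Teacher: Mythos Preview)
Your argument is correct and is precisely the ``standard exercise'' the paper leaves to the reader: apply Theorem~\ref{main_thm} to $(M,S^0)$ and $(M',S^{n-1})$, use the excision identification $\H_k(M,S^0)\cong\H_k(M',S^{n-1})$ recorded after the first corollary, and then do the singular--homology bookkeeping for the belt sphere (non-separating in case (a), separating in case (b)). The one place where care is needed---that Theorem~\ref{main_thm} only gives $\mathrm{Im}(\partial_*)\subseteq\ker(i_*)$ rather than equality---does not affect your argument, since you only invoke exactness at $\H_k(M)$, $\H_k(M')$ and at $A_k$, all of which are guaranteed. Your justification of the degree-$1$ step could be simplified: $\partial_*\colon A_1\to\H_0(S^0)$ and $\partial_*\colon A_1\to\H_0(S^{n-1})$ vanish simply because $\H_0=0$ for every space (property~(2)), no appeal to finite fundamental groups is needed.
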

\begin{cor}
If $M^n$ is closed and orientable with $\H _n(M)=H_n(M)$ and $M'$ is obtained from $M$ by a surgery in dimension $m\neq n-2$, then $\H _n(M')=H_n(M')$.
\end{cor}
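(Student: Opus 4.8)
The plan is to read off $\H_n(M')$ from the exact sequence of Theorem~\ref{main_thm} applied to a pair produced by the surgery, comparing at each step with the ordinary long exact sequence of that pair. I shall use that $\H_*$ carries natural monomorphisms $\H_*\hookrightarrow H_*$ which commute with $i_*$, $j_*$ and $\partial_*$ (indeed $\partial_*$ was defined in Theorem~\ref{main_thm} as the restriction of the singular connecting homomorphism), so that the hypothesis $\H_n(M)=H_n(M)$ says precisely that the inclusion $\H_n(M)\subseteq H_n(M)$ is an equality; I also use the identity $\H_k(M,S^m)=\H_k(M',S^{n-m-1})$ from the discussion above together with its classical singular counterpart $H_k(M,S^m)=H_k(M',S^{n-m-1})$. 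We may assume $n\geq 3$, the case $n\leq 2$ being elementary. Let $N:=S^{n-m-1}\subseteq M'$ be the belt sphere of the attached handle, a closed submanifold; since $m\neq n-2$ we have $\dim N=n-m-1\neq 1$, so Theorem~\ref{main_thm} applies to $(M',N)$. The value $m=n-1$ must be handled separately.

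\textbf{Case $0\leq m\leq n-3$.} Since $m<n-1$ one has $H_{n-1}(S^m)=0$, so the singular $j_*\colon H_n(M)\to H_n(M,S^m)$ is onto; restricting it to $\H_n(M)\to\H_n(M,S^m)$ and using $\H_n(M)=H_n(M)$ gives $j_*(\H_n(M))=j_*(H_n(M))=H_n(M,S^m)$, which, being contained in $\H_n(M,S^m)\subseteq H_n(M,S^m)$, forces $\H_n(M,S^m)=H_n(M,S^m)$ and hence $\H_n(M',N)=H_n(M',N)$. Now apply Theorem~\ref{main_thm} to $(M',N)$. Because $n-m-1<n$ we have $\H_n(N)\subseteq H_n(N)=0$, so for $j_*\colon\H_n(M')\to\H_n(M',N)$ one gets $\ker j_*=\mathrm{Im}\,i_*=0$ and $\mathrm{Im}\,j_*=\ker\partial_*$. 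On $\H_n(M',N)=H_n(M',N)$ the map $\partial_*$ is the restriction of the singular one, so $\ker\partial_*$ is the kernel of the singular $\partial_*$, which by exactness is the image of the singular $j_*\colon H_n(M')\to H_n(M',N)$, a map that is injective because $H_n(N)=0$. Therefore $j_*(\H_n(M'))=\mathrm{Im}\,j_*=j_*(H_n(M'))$ inside $H_n(M',N)$ with $j_*$ injective on $H_n(M')$, whence $\H_n(M')=H_n(M')$. Note that this step does not need the vanishing of $H_{n-1}(N)$, which is crucial since $H_{n-1}(N)\cong\Z$ when $m=0$.

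\textbf{Case $m=n-1$.} An $(n-1)$--surgery is the reverse of a $0$--surgery, so $M$ is obtained from $M'$ by a $0$--dimensional surgery and the corollary on surgery in dimensions $0$ and $n-1$ applies with $M$ and $M'$ interchanged. If $M$ and $M'$ have the same number of components (the surgery sphere is non-separating), part~(a) of that corollary yields $\H_k(M)=\H_k(M')$ for all $k\neq n-1$, in particular $\H_n(M')=\H_n(M)=H_n(M)=H_n(M')$, the last equality because $M$ and $M'$ are closed orientable with equally many components. If $M'$ has one more component than $M$ (the surgery sphere separates), then, looking only at the affected components, $M=M_0\#M_1$ and $M'=M_0\coprod M_1$ with $M_0,M_1$ closed connected orientable, and part~(b) says that $\H_n=H_n$ holds for the connected sum $M$ if and only if it holds for the disjoint union $M'$; the former is the hypothesis, the unaffected components satisfy $\H_n=H_n$ by hypothesis as well, and additivity of $\H_*$ then gives $\H_n(M')=H_n(M')$.

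I expect the only real subtlety to be the case $m=n-1$: there $H_{n-1}(S^{n-1})\cong\Z$, and once the surgery sphere separates the singular $j_*\colon H_n(M)\to H_n(M,S^{n-1})$ is no longer onto, so the first step of the generic argument collapses. This is exactly the connected-sum behaviour of $\H_n$, which is why the corollary on $0$-- and $(n-1)$--dimensional surgery must be taken as an input rather than reproved here. Everything else is a diagram chase whose only delicate points---the compatibility of $i_*,j_*,\partial_*$ with $\H_*\hookrightarrow H_*$ and the identification $\H_*(M,S^m)=\H_*(M',S^{n-m-1})$---are already in place.
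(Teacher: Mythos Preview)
Your proof is correct and realizes precisely the ``case by case analysis'' the paper indicates without writing out. The split into $0\le m\le n-3$ (where Theorem~\ref{main_thm} applies directly to $(M',S^{n-m-1})$ because $n-m-1\ge 2$) versus $m=n-1$ (where you fall back on the $0$--/$(n-1)$--surgery corollary) is exactly the natural case distinction, and the diagram chase in the first case---pushing $\H_n(M)=H_n(M)$ through $j_*$ to get $\H_n(M,S^m)=H_n(M,S^m)$, then using $\mathrm{Im}(j_*)=\ker(\partial_*)$ on the $M'$ side---is clean and uses only the stated properties.
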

The proof of this last corollary needs a case by case analysis. It shows in particular that finite K--area in Gromov's sense is preserved under surgery in dimension $m\neq n-2$. This was also proved in \cite{Fuku} if $n$ is even. The statement in the corollary fails obviously for surgeries in dimension $n-2$ as the example $M=S^2$ and $M'=T^2$ shows. Remarkable is the relation to questions of positive scalar curvature. We observed in \cite{List10} that a closed Riemannian spin manifold $(M^n,g)$ of positive scalar curvature satisfies $\H _n(M)=H_n(M)$. Moreover, if $M'$ is obtained from $M$ by a surgery in dimension $m\leq n-3$, then $M'$ admits a metric of positive scalar curvature. This was proved by Gromov, Lawson in \cite{GrLa1} respectively by Schoen, Yau in \cite{SchY6}. We notice that in general the existence of a positive scalar curvature metric is not preserved under surgery in dimension $m\in \{ n-2,n-1\} $. Theorem \ref{main_thm} may also be used to consider the behavior of the K--area homology under connected sums along closed submanifolds with finite fundamental group. The question of preserving positive scalar curvature in this context was treated by Ammann et.~al in \cite{pre_AmDH}. Using the APS index theorem we give an index theoretic obstruction to positive scalar curvature on compact spin manifolds with boundary which generalizes the result in \cite{APS2}:
\begin{thm}
\label{thm_aps_scalar}
Let $(M^n,g)$ be a compact Riemannian spin manifold of positive scalar curvature such that $g$ is a Riemannian product near $\partial M$, then
\begin{enumerate}
\item[(i)] $\int _M\widehat{A}(TM,\nabla ^{TM}) =\frac{\eta (\dirac _{\partial M})}2$ where $\eta (\dirac _{\partial M})$ means the $\eta $--invariant of the associated spin Dirac operator on $\partial M$.
\item[(ii)] The $\widehat{A}$--class of the tangent bundle satisfies
\[
\widehat{A}_k(TM)\cap [M]\in \H _{n-4k}(M,\partial M;\Q )
\]
for all $4k<n$ where $\cap [M]:H^{4k}(M;\Q )\to H_{n-4k}(M,\partial M;\Q )$ is the Poincar\'e--Lefschetz duality map. In fact, $\widehat{A}_0=1\in H^0(M;\Q )$ implies $\H _n(M,\partial M)=H_n(M,\partial M)$.
\end{enumerate}
\end{thm}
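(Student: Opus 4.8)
The plan is to derive both parts from the Atiyah--Patodi--Singer index theorem \cite{APS2} together with the Lichnerowicz--Bochner vanishing argument, with careful attention to the boundary.

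For (i) I would apply the APS theorem to the spin Dirac operator of $(M,g)$ with the Atiyah--Patodi--Singer boundary condition. Since $g$ is a Riemannian product near $\partial M$ this gives
\[
\mathrm{ind}(\dirac_M,\mathrm{APS})=\int_M\widehat{A}(TM,\nabla^{TM})-\frac{\eta(\dirac_{\partial M})+h}{2},\qquad h=\dim\ker\dirac_{\partial M}.
\]
Because $g=dt^{2}+g_{\partial M}$ on a collar, the induced metric on $\partial M$ has scalar curvature $s_g>0$, so Lichnerowicz on the closed manifold $\partial M$ forces $h=0$; and attaching the half--cylinder $\partial M\times[0,\infty)$ produces a complete spin manifold of uniformly positive scalar curvature on which $\dirac^{2}=\nabla^{*}\nabla+\tfrac{s}{4}$ kills every $L^{2}$ harmonic spinor, so the $L^{2}$--index---equivalently $\mathrm{ind}(\dirac_M,\mathrm{APS})$---vanishes. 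Substituting $h=0$ and $\mathrm{ind}=0$ into the displayed identity yields (i).

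For (ii) I would run the same argument for the Dirac operator twisted by a Hermitian bundle with unitary connection $(E,\nabla^{E})$ that is trivial, with the product connection, on a collar of $\partial M$. Then the induced boundary operator is $\mathrm{rk}(E)$ copies of $\dirac_{\partial M}$, so $\eta(\dirac^{E}_{\partial M})=\mathrm{rk}(E)\,\eta(\dirac_{\partial M})$ and $h^{E}=\mathrm{rk}(E)\,h=0$; subtracting $\mathrm{rk}(E)$ times the identity of (i) from the twisted APS formula cancels all boundary terms and leaves
\[
\mathrm{ind}(\dirac^{E}_M,\mathrm{APS})=\int_M\widehat{A}(TM)\,\mathrm{ch}'(E,\nabla^{E}),\qquad\mathrm{ch}'(E)=\mathrm{ch}(E)-\mathrm{rk}(E),
\]
where the reduced Chern character on the right, since $E$ is trivialised near $\partial M$, refines canonically to a class in $\bigoplus_{j\geq1}H^{2j}(M,\partial M;\Q)$. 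The twisted Weitzenb\"ock formula $\dirac_{E}^{2}=\nabla^{*}\nabla+\tfrac{s}{4}+\mathcal{R}^{E}$, with the rank--independent bound $\|\mathcal{R}^{E}\|\leq\binom{n}{2}\|R^{\nabla^{E}}\|$, shows that there is $\varepsilon_{0}>0$, depending only on $n$ and on $\min_M s>0$, such that $\|R^{\nabla^{E}}\|<\varepsilon_{0}$ makes the right--hand side of this formula positive; passing again to the cylindrical end (where $E$ is extended by the product) we obtain $\mathrm{ind}(\dirac^{E}_M,\mathrm{APS})=0$, hence $\int_M\widehat{A}(TM)\,\mathrm{ch}'(E)=0$ for every such $E$. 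By the cup/cap adjunction $\langle\beta,\widehat{A}_{k}(TM)\cap[M]\rangle=\langle\beta\cup\widehat{A}_{k}(TM),[M]\rangle$ this reads $\sum_{4k<n}\langle\mathrm{ch}'(E),\widehat{A}_{k}(TM)\cap[M]\rangle=0$; after the standard step of separating out the individual cohomological degrees one gets $\langle\mathrm{ch}'(E),\widehat{A}_{k}(TM)\cap[M]\rangle=0$ for each $k$ with $4k<n$ and every admissible $E$. By the characterisation of relative K--area homology in \cite{List10,habil}---a class lies in $\H_{q}(M,\partial M;\K)$ once it pairs trivially with the curvature data of all bundles of sufficiently small curvature that are trivial near $\partial M$---this is precisely the statement $\widehat{A}_{k}(TM)\cap[M]\in\H_{n-4k}(M,\partial M;\Q)$. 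The case $k=0$ says $[M]\in\H_{n}(M,\partial M;\Q)$, and since $H_{n}(M,\partial M;\Q)$ is spanned by the fundamental classes of the components (Poincar\'e--Lefschetz duality), $\H_{n}(M,\partial M)=H_{n}(M,\partial M)$.

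The analytic input---Lichnerowicz vanishing and its transfer to a manifold with a cylindrical end---is classical; I expect the main obstacle to be the relative bookkeeping: setting up the twisted APS theorem for a coefficient bundle that is trivial near $\partial M$ so that the $\eta$-- and kernel--contributions on $\partial M$ cancel exactly against $\mathrm{rk}(E)$ copies of the untwisted ones, and then matching the resulting vanishing of $\int_M\widehat{A}(TM)\,\mathrm{ch}'(E)$ with the precise form of the relative K--area homology pairing from \cite{List10,habil}, including the degree--separation step. For $n$ odd the Dirac spectrum of the even--dimensional closed manifold $\partial M$ is symmetric about $0$, so $\eta(\dirac_{\partial M})=0$ and (i) degenerates to $0=0$, all of the content then residing in (ii).
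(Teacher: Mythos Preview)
Your approach is essentially the same as the paper's---APS index theorem, Lichnerowicz vanishing on the manifold with cylindrical end, and the Chern--character criterion for relative K--area homology---and works as written for even $n$. Two points need attention.

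\textbf{The odd--dimensional case for (ii) is missing.} Your index argument for (ii) uses $\mathrm{ind}(\dirac^{E}_{M},\mathrm{APS})$, which presupposes a chiral splitting $\spinor^{+}\oplus\spinor^{-}$ and hence even $n$. Your final paragraph observes correctly that for odd $n$ statement (i) is vacuous, but saying ``all of the content then residing in (ii)'' does not prove (ii): you have given no argument that $\widehat{A}_{k}(TM)\cap[M]\in\H_{n-4k}(M,\partial M;\Q)$ when $n$ is odd. Since for odd $n$ these are odd--degree classes, their K--area is by definition that of $\theta\times[S^{1}]$ on $M\times S^{1}$, and this is exactly how the paper proceeds: it applies the even--dimensional argument to $M\times S^{1}$. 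You should do the same, or supply an alternative.

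\textbf{The degree--separation step is misstated.} From $\sum_{k}\langle\mathrm{ch}'(E),\widehat{A}_{k}(TM)\cap[M]\rangle=0$ for a \emph{fixed} admissible $E$ you cannot conclude that each summand vanishes; different Chern--character components could cancel. What is true, and what the paper uses, is that the \emph{total} class $\widehat{A}(TM)\cap[M]\in H_{2*}(M,\partial M;\Q)$ has finite K--area by the (relative) Chern--character criterion, and since the classifying map $\rho^{E}_{*}$ preserves degree, a mixed--degree class has finite K--area if and only if each graded component does. So drop the per--$k$ pairing claim and argue directly for the total class; the componentwise statement then follows automatically.
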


\section{Bundles of small curvature if $\pi _1(M)=0$}
In this section we extend methods and results \'a la Uhlenbeck \cite{Uhl1,Uhl2} and Gromov \cite{Gr01}. The main tool proving theorem \ref{main_thm} is theorem \ref{thm_uhl} below. Particularly important is the independence of the constants $C$ and $\epsilon $ from $\mathrm{rk}(\bundle{E})$, because in the presence of infinite K--area, the rank of the interesting bundles tends to infinity. Note that the assumption $\pi _1(M)=0$ in theorem \ref{thm_uhl} is essential. For instance, there exist nontrivial flat complex bundles on $\R P^n$ if $n\geq 2$. Moreover, even if a vector bundle is trivial in case $\pi _1(M)\neq 0$, in general we can not find a gauge transformation $s:M\to \mathrm{U}(m)$ which provides control of the connection form by the curvature like in theorem \ref{thm_uhl}.  For instance, a Hermitian connection $\nabla =d+A$ on the complex line bundle $S^1\times \C \to S^1$ is always flat: $R^\nabla =0$, however $\nabla $ is gauge equivalent to $d$ if and only if $\int _{S^1}A\in 2\pi \mathbf{i}\Z $. Observe that \cite[theorem 2.5]{Uhl2} is false for $n=2$, in the proof of \cite[lemma 2.2]{Uhl2} one needs $S^{n-2}$ to be connected. We also notice that the proof of \cite[theorem 1.3]{Uhl1} can not as easily be generalized to closed manifolds because an important part of this proof uses the contractibility of the base manifold (cf.~\cite[proof of lemma 2.3]{Uhl1}). Furthermore, the proof of \cite[section $4\frac{1}{4}$]{Gr01} does not provide a smooth unitary trivialization, i.e.~trying to extend Gromov's ansatz needs a more subtle selection theorem. 
\begin{thm}
\label{thm_uhl}
Let $(M^n,g)$ be a simply connected closed Riemannian manifold, then there are constants $\epsilon =\epsilon (M,g)>0$ and $C=C(M,g)>0$ with the following property: If $(\bundle{E},\nabla )\stackrel{\pi }\to M$ is a Hermitian vector bundle with curvature $\| R^\nabla \| _{g}<\epsilon $, then there is a smooth unitary trivialization $\Psi :\bundle{E}\to M\times \C ^m$ such that
\[
\| \nabla -\Psi ^*d\| _{g}\leq C\cdot \| R^\nabla \| _{g},
\] 
here $\Psi ^*d=\Psi ^{-1}d\Psi $ is the pullback of the canonical trivial connection $d$.
\end{thm}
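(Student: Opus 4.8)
The plan is to construct $\Psi$ out of local \emph{radial} (synchronous) gauges and to patch them together by a quantitative, rank--free gluing argument in which the hypothesis $\pi _1(M)=0$ is used in an essential way --- both to trivialize the first cohomology that would otherwise obstruct the gluing and, ultimately, to see that $\bundle{E}$ is topologically trivial at all.

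First I would fix, depending only on $(M,g)$, a finite cover $\{U_\alpha\}_{\alpha\in I}$ of $M$ by geodesically convex balls with a subordinate partition of unity, chosen small enough that every nonempty multiple intersection is again geodesically convex, and pick centers $p_\alpha\in U_\alpha$. Parallel transport of $\nabla$ along the radial geodesics from $p_\alpha$ gives unitary trivializations $\psi_\alpha\colon\bundle{E}|_{U_\alpha}\to U_\alpha\times\C^m$ for which the radial component of $A_\alpha:=\psi_\alpha^*\nabla-d$ vanishes; inserting this into the structure equation $dA_\alpha+A_\alpha\wedge A_\alpha=\psi_\alpha^*R^\nabla$ and comparing ODEs along radial geodesics yields a \emph{pointwise} estimate $\|A_\alpha\|_g\le\Lambda\,\|R^\nabla\|_g$ on $U_\alpha$, where $\Lambda=\Lambda(M,g)$ does \emph{not} depend on $m=\mathrm{rk}(\bundle{E})$ --- this independence is the heart of the matter, and it is automatic because the bound is pointwise and $\mathrm{U}(m)$ carries the operator norm. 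Consequently the transition functions $g_{\alpha\beta}:=\psi_\alpha\circ\psi_\beta^{-1}$ satisfy $g_{\alpha\beta}^{-1}dg_{\alpha\beta}=A_\beta-g_{\alpha\beta}^{-1}A_\alpha g_{\alpha\beta}$, so $\|dg_{\alpha\beta}\|_g\le 2\Lambda\|R^\nabla\|_g$, and on the controlled--diameter sets $U_\alpha\cap U_\beta$ each $g_{\alpha\beta}$ is $C^1$--close to a constant $c_{\alpha\beta}\in\mathrm{U}(m)$ with $\|g_{\alpha\beta}-c_{\alpha\beta}\|_{C^1}\le\Lambda'\|R^\nabla\|_g$, while the cocycle identity makes $c_{\alpha\beta}c_{\beta\gamma}c_{\gamma\alpha}$ lie within $\Lambda''\|R^\nabla\|_g$ of the identity on triple overlaps. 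Thus, up to $O(\|R^\nabla\|_g)$, the bundle looks locally constant.

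The goal is then to upgrade this to honest global data: gauge transformations $\rho_\alpha\colon U_\alpha\to\mathrm{U}(m)$ with $\rho_\alpha g_{\alpha\beta}\rho_\beta^{-1}\equiv\mathrm{Id}$ --- so that $\Psi:=\rho_\alpha\psi_\alpha$ is a well--defined unitary trivialization --- satisfying in addition $\|d\rho_\alpha\|_g\le K(M,g)\,\|R^\nabla\|_g$; granting this, on $U_\alpha$ one has $\Psi^*\nabla-d=\rho_\alpha A_\alpha\rho_\alpha^{-1}-(d\rho_\alpha)\rho_\alpha^{-1}$ and the theorem follows with $C:=\Lambda+K$. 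I would produce the $\rho_\alpha$ by induction over the skeleta of the (finite) nerve, merging the local trivializations over an increasing union $U_{\alpha_1}\cup\dots\cup U_{\alpha_k}$; each merging step amounts to solving a near--identity gluing equation on the relevant overlap whose linearization is a \v{C}ech coboundary equation $\delta^0\xi=\eta$ with values in $\mathfrak{u}(m)$, and the point is that because $H^1(M;\R)=0$ this can be solved with a right inverse whose norm is controlled \emph{independently of $m$} --- the right inverse being assembled from the fixed partition of unity and the scalar operator $d^*d$, none of which sees the rank --- while all nonlinear errors are estimated by the operator norm and hence by $m$--independent constants. At the final stage the remaining constant cocycle defines a flat $\mathrm{U}(m)$--bundle, which over the simply connected $M$ is necessarily trivial; this is where $\pi_1(M)=0$ (and not merely $b_1(M)=0$) is used, and it is also the point at which one learns \emph{a posteriori} that $\bundle{E}$ was trivial.

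The step I expect to be the main obstacle is exactly this inductive patching on a \emph{closed} manifold with rank--uniform constants, and within it the verification that $\bundle{E}$ is topologically trivial at all. The naive adaptation of the arguments of Uhlenbeck and Gromov fails because the overlaps $U_\alpha\cap(U_{\alpha_1}\cup\dots\cup U_{\alpha_k})$ produced in the merging process need not be connected --- precisely the situation where ``$S^{n-2}$ is connected'' is invoked in \cite{Uhl2} and where the contractibility of the base is used in \cite{Uhl1} --- so one must carry gauge transformations that are defined componentwise and jump by constants across components, and use $H^1(M;\R)=0$ to absorb those jumps into a single globally defined primitive of controlled size, \emph{smoothly} and not merely continuously. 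I would expect the analytic core to be a uniform (rank--independent) elliptic estimate for $d^*d$ on $\mathfrak{u}(m)$--valued functions on $M$, feeding a contraction argument that simultaneously produces the trivialization $\Psi$, establishes triviality of $\bundle{E}$, and yields the stated curvature bound.
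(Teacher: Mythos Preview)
Your local step agrees with the paper: radial (synchronous) gauges on geodesic balls give $\|A_\alpha\|_g\le\Lambda\|R^\nabla\|_g$ with $\Lambda$ independent of the rank (this is the paper's Proposition~\ref{local_uhl}), and hence $\|dg_{\alpha\beta}\|_g\le 2\Lambda\|R^\nabla\|_g$. From that point on the two arguments diverge.

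The paper does \emph{not} try to show that the $g_{\alpha\beta}$ are close to some constants $c_{\alpha\beta}$ and then trivialize a residual flat bundle. Instead it makes a single additional choice that forces the transitions to be close to the \emph{identity} from the start: fix a base point $x_1$, minimal geodesics $\kappa_i$ from $x_i$ to $x_1$, and compose each radial gauge with parallel transport along $\kappa_i$. Then $g_{ij}(y)$ is exactly the holonomy around the loop $\kappa_i\circ(-\gamma_i)\circ\gamma_j\circ(-\kappa_j)$ based at $x_1$. Simple connectivity enters \emph{here}, via Lemma~\ref{lemma_area} (the area--holonomy estimate): since the loop bounds a disc of area $\le\beta=\beta(M,g)$, one gets $\|g_{ij}-\mathrm{Id}\|\le\beta\|R^\nabla\|_g$ uniformly in the rank. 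With $g_{ij}$ already $C^0$--close to $\mathrm{Id}$ and $dg_{ij}$ small, the patching is literally \cite[Prop.~3.2]{Uhl1} applied with $h_{ij}=\mathrm{Id}$; no residual constant cocycle appears, and the difficulties you anticipate about disconnected overlaps and $S^{n-2}$ never arise.

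Your route---independent radial gauges, transitions close to constants, inductive merging over the nerve, and a final appeal to $\pi_1=0$ to kill a flat bundle---can in principle be made to work, but as written the patching step is underdetermined. The constants $c_{\alpha\beta}$ you extract satisfy the cocycle condition only up to $O(\|R^\nabla\|_g)$, so ``the remaining constant cocycle'' at the final stage is not literally a cocycle; you would first need a rank--uniform non-abelian perturbation argument to replace them by an exact constant cocycle, then trivialize that by $\pi_1=0$, and only then run Uhlenbeck's near--identity patching on $c_\alpha g_{\alpha\beta}c_\beta^{-1}$. Invoking $H^1(M;\R)=0$ alone does not do this, since the obstruction lives in non-abelian $\check H^1$ with values in $\mathrm{U}(m)$, not in $H^1(M;\R)$. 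The paper's device of routing everything through $x_1$ and using the holonomy--area estimate collapses these three steps into one and is the main simplification you are missing.
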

Here and in the following $\| .\| _g$ denotes the $L^\infty $--comass operator norm on sections of $\Lambda ^*M\otimes \mathrm{End}(\bundle{E})$, in fact
\[
\| A\| _g=\sup _{x\in M}\{ |A(v_1,\ldots ,v_s)|_{op}\ |\ 0\leq s,v_i\in T_xM,|v_i|_g\leq 1\} 
\]
where $|.|_{op}$ denotes the operator norm on the fibers of $\bundle{E}$ induced by the Hermitian metric on $\bundle{E}$. If $A $ has values in $\mathrm{End}(\bundle{E})$, we omit the subscript $g$ and set $\| A\| =\sup _{x\in M}|A_x|_{op}$.
\begin{lem}[{\cite[$\text{sec. }4\frac{1}{4} \text{ eq. }(\Box )$]{Gr01}}]
\label{lemma_area}
Let $D\hookrightarrow (M,g)$ be a compact contractible surface with piecewise smooth boundary $\partial D$. If $(\bundle{E},\nabla )\to M$ is a Hermitian vector bundle, then the parallel transport $P$ on $\bundle{E}$ along $\partial D$ satisfies
\[
\| P-\mathrm{Id}\| \leq \mathrm{area}(D,g)\cdot \| R^\nabla _{|D}\| _g,
\] 
here $R^\nabla _{|D}$ denotes the curvature of $(\bundle{E},\nabla )_{|D}$.
\end{lem}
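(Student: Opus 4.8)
The statement is a form of the \emph{non--abelian Stokes theorem}, and the plan is to prove it by a variation--of--holonomy computation that yields precisely the constant $\mathrm{area}(D,g)$. Since a compact contractible surface with piecewise smooth boundary is a closed $2$--disk, after restricting $(\bundle{E},\nabla )$ to $D$ we may forget $M$ and work on $D$ with its induced metric. First I would fix an interior point $p_0\in D$ and a piecewise smooth ``polar'' parametrization $\phi \colon [0,1]\times (\R /\Z )\to D$ that collapses $\{ 0\} \times (\R /\Z )$ to $p_0$, restricts to an orientation preserving diffeomorphism of $(0,1)\times (\R /\Z )$ onto the punctured interior, and maps $\{ 1\} \times (\R /\Z )$ onto $\partial D$; such a $\phi $ exists because $D$ is a disk, and the area formula gives $\int _0^1\!\!\int _0^1 |\partial _r\phi \wedge \partial _\theta \phi |\, d\theta \, dr=\mathrm{area}(D,g)$. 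For $r\in [0,1]$ let $c_r$ be the radial path $\rho \mapsto \phi (\rho ,0)$ from $p_0$ to $q_r:=\phi (r,0)$, let $\gamma _r$ be the loop $\theta \mapsto \phi (r,\theta )$ based at $q_r$, and let $P_r$ be the parallel transport around $c_r\ast \gamma _r\ast c_r^{-1}$, a unitary endomorphism of $\bundle{E}_{p_0}$ (parallel transport of a Hermitian connection is an isometry). Then $P_0=\mathrm{Id}$, while $P_1$ is the conjugate of $P$ by the unitary parallel transport along $c_1$, so that $\| P-\mathrm{Id}\| =\| P_1-\mathrm{Id}\| $ (the left side being independent of the base point on $\partial D$), and it suffices to bound the latter.

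The core step is the identity
\[
\tfrac{d}{dr}P_r=-\mathcal{F}(r)\, P_r,\qquad |\mathcal{F}(r)|_{op}\leq \int _0^1 \bigl| R^\nabla (\partial _r\phi ,\partial _\theta \phi )\bigr| _{op}\, d\theta ,
\]
with each $\mathcal{F}(r)$ skew--Hermitian. To derive it I would trivialize $\phi ^*\bundle{E}$ over $[0,1]\times (\R /\Z )$ by parallel transport along the radial lines emanating from $p_0$. In this unitary ``radial gauge'' the connection form is $A=A_\theta (r,\theta )\, d\theta $ with $A_\theta $ skew--Hermitian, its curvature is $\phi ^*R^\nabla =(\partial _rA_\theta )\, dr\wedge d\theta $, and $P_r$ equals $Q(r,1)$, where $\partial _\theta Q=-A_\theta Q$ and $Q(r,0)=\mathrm{Id}$. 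Differentiating the $Q$--equation in $r$ and putting $W:=(\partial _rQ)Q^{-1}$, which is skew--Hermitian and vanishes at $\theta =0$, one computes
\[
\partial _\theta \bigl(Q^{-1}WQ\bigr)=-Q^{-1}(\partial _rA_\theta )Q ,\qquad\text{hence}\qquad W(r,1)=-Q(r,1)\Bigl(\int _0^1 Q^{-1}(\partial _rA_\theta )Q\, d\theta \Bigr)Q(r,1)^{-1}.
\]
Since $\partial _rP_r=\partial _rQ(r,1)=W(r,1)Q(r,1)$, since $\partial _rA_\theta $ is the coefficient of $\phi ^*R^\nabla $, and since conjugation by the unitary $Q$ preserves operator norms, this gives the displayed bound with $\mathcal{F}(r)=-W(r,1)$.

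It remains to integrate. Because $P_r$ is unitary, $|\tfrac{d}{dr}P_r|_{op}=|\mathcal{F}(r)|_{op}$, hence
\[
\| P-\mathrm{Id}\| =\| P_1-\mathrm{Id}\| \leq \int _0^1 |\mathcal{F}(r)|_{op}\, dr\leq \int _0^1\!\!\int _0^1 \bigl| R^\nabla (\partial _r\phi ,\partial _\theta \phi )\bigr| _{op}\, d\theta \, dr .
\]
Writing $R^\nabla _{|D}=f\cdot \mathrm{dA}$ with $\mathrm{dA}$ the area form of $(D,g)$ and $f$ an $\mathrm{End}(\bundle{E})$--valued function on $D$, one has $|R^\nabla (\partial _r\phi ,\partial _\theta \phi )|_{op}=|f\circ \phi |_{op}\cdot |\partial _r\phi \wedge \partial _\theta \phi |$, while $|f(x)|_{op}=|R^\nabla (e_1,e_2)|_{op}\leq \| R^\nabla _{|D}\| _g$ for any $g$--orthonormal basis $e_1,e_2$ of $T_xD$; so the last double integral equals $\int _D |f|_{op}\, \mathrm{dA}\leq \| R^\nabla _{|D}\| _g\cdot \mathrm{area}(D,g)$, which is the assertion.

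The one step that needs actual work is the variation--of--holonomy identity of the second paragraph; everything else is bookkeeping, with one minor technical caveat: $\phi $ is merely Lipschitz at $p_0$ and at the corners of $\partial D$. This is harmless --- one runs the estimate on the subannulus $\{ r\geq \delta \}$, where the radial gauge is smooth, and lets $\delta \downarrow 0$, using continuity of parallel transport (so that the holonomy around the shrinking loop $\gamma _\delta $ tends to $\mathrm{Id}$) together with $\mathrm{area}(\phi ([\delta ,1]\times (\R /\Z )))\to \mathrm{area}(D,g)$. Alternatively one can avoid the ODE by cutting $D$ along a fine grid and telescoping the holonomies of the plaquettes, each contributing at most its own area times $\| R^\nabla _{|D}\| _g$ up to higher order; this produces the same constant with comparable effort.
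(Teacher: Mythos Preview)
Your argument is correct and is essentially the same variation--of--holonomy proof the paper gives: the paper also parametrizes $D$ by a homotopy $c:[0,1]^2\to D$ contracting the boundary loop to a point, derives the differential equation $\partial _sP_s=\bigl(\int _0^1 P_{s,t}R(\partial _tc,\partial _sc)P_{s,t}^{-1}\,dt\bigr)P_s$ (your radial--gauge conjugation $Q^{-1}(\partial _rA_\theta )Q$ is precisely this curvature transport), and integrates using unitarity and the area formula. The only cosmetic differences are that the paper contracts to a boundary point rather than an interior point and phrases the ODE via product integrals rather than via a radial gauge.
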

\begin{proof}
Fix a point $p\in \partial D$ and let $c:[0,1]\times [0,1]\to D\subseteq M$ be a piecewise smooth homotopy which parametrizes $D$ and contracts $D$ to $p$, i.e.~$c(s,0)=c(s,1)=p$ for all $s$, $c(0,t)=p$ for all $t$ and $t\mapsto c(1,t)$ is the loop based at $p$ which parametrizes $\partial D$. Let $P_s\in \mathrm{U}(\bundle{E}_p)$ be the parallel transport on $\bundle{E}$ along the closed loop $t\mapsto c_s(t):=c(s,t)$ and define the curvature transport
\[
R_{s,t}=P_{s,t}R(\partial _tc(s,t),\partial _sc(s,t))P_{s,t}^{-1}\in \frak{u}(\bundle{E}_p)
\]
where $P_{s,t}$ is the parallel transport along $c_s$ from $c_s(t)$ to $p=c_s(0)$.
Then $P_s$ is given by the product integral (cf.~\cite{Nij}):
\[
P_s=\prod _{[0,s]\times [0,1]}\left( \mathrm{Id}+R_{u,t}dtdu\right) =\prod _{0}^s\left( \mathrm{Id}+\left( \int _0^1R_{u,t}dt\right) du\right) .
\]
Thus, the parallel transport satisfies the differential equation $\partial _sP_s=\left( \int _0^1R_{s,t}dt\right) P_s$ respectively the integral equation
\[
P_s=\mathrm{Id}+\int _0^s\int _0^1R_{u,t}dtP_udu
\]
for all $s\in [0,1]$. Moreover, $P_{s,t}$ and $P_{s}$ are unitary operators, i.e.~$|R_{s,t}|_{op}\leq \| R^\nabla _{|D}\| _g\cdot |\partial _tc(s,t)\wedge \partial _sc(s,t)|_g$ yields the claim
\[
\| P_1-\mathrm{Id}\| \leq \| R^\nabla _{|D}\| _{g}\cdot \int _0^1\int _0^1|\partial _tc(s,t)\wedge \partial _sc(s,t)|_g dtds= \| R^\nabla _{|D}\| _{g}\cdot \mathrm{area}(D,g),
\]
note that $P_1$ is the parallel transport based at $p $ along $\partial D$. In the abelian case $\mathrm{rk}(\bundle{E})=1$, $R_{s,t}= R(\partial _tc(s,t),\partial _sc(s,t))$ yields the differential equation
\[
\partial _s\log P_s=\frac{\partial _sP_s}{P_s}=\int _0^1R(\partial _tc(s,t),\partial _sc(s,t))dt.
\]
Hence, if $D_s$ is the surface bounded by $c_s$ with $\| R^\nabla _{|D}\| _g\cdot \mathrm{area}(D_s)<\pi $, we conclude $P_s=\exp \left( \int _0^s\int _0^1R_{u,t}dtdu\right) $ which proves equation ($\Box $) in \cite[$4\frac{1}{4}$]{Gr01} for $\mathrm{rk}(\bundle{E})=1$:
\[
\| P_s-\mathrm{Id}\| \leq 2\sin \left( \frac{1}{2}\| R^\nabla _{|D}\| _g\cdot \mathrm{area}(D_s,g) \right) .
\]
In order to show this inequality in the nonabelian case one may use the above product integral. However, since the methods below can not provide optimal constants $\epsilon $, $C$ and we are only interested in the case $\| R^\nabla \| _g\cdot \mathrm{area}(D,g)\ll 1$ we omit the proof of Gromov's version which involves the sinus function. 
\end{proof}
\begin{prop}
\label{local_uhl}
Let $(M,g)$ be a Riemannian manifold, $\varrho _x$ be the injectivity radius at $x\in M$ and $B_r(x)\subseteq M$ be the open ball around $x$ of radius $r< \varrho _x$, then there is a constant $C=C(g,r)<+\infty $ with the following property. If $\pi :(\bundle{E},\nabla )\to M$ is a Hermitian vector bundle, then the parallel transport $P^\gamma _{y,x}:\bundle{E}_x\to \bundle{E}_y$ induced by $\nabla $ along the unique minimal geodesic $\gamma $ from $x$ to $y$ yields a smooth unitary trivialization $\Psi :\bundle{E}_{|B_r(x)}\to B_r(x)\times \bundle{E}_x$ with
\[
\| \nabla -\Psi ^*d\| _{g}\leq C\cdot \| R^\nabla \| _{g}
\]
on $B_r(x)$. Moreover, $C(r)=\frac{r}{2}$ for the Euclidean space $(M,g)=\R ^n$.
\end{prop}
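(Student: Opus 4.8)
The plan is to build the trivialization $\Psi$ directly from radial parallel transport and then estimate how far the resulting connection form is from $d$ by applying Lemma \ref{lemma_area} to the geodesic triangles swept out as we move along the ball. Concretely, fix $x$, identify $\bundle{E}_x\cong\C^m$ unitarily, and for $y\in B_r(x)$ let $\Psi(e)=P^\gamma_{x,y}(e)$ for $e\in\bundle{E}_y$, where $\gamma$ is the unique minimal geodesic from $x$ to $y$; since $r<\varrho_x$ this geodesic depends smoothly on $y$, so $\Psi$ is a smooth unitary bundle isomorphism onto $B_r(x)\times\C^m$. By construction the pulled-back connection $\Psi^*d=\Psi^{-1}d\Psi$ has the property that radial parallel transport for $\nabla$ agrees with radial parallel transport for $\Psi^*d$ (which is just $d$); equivalently, writing $A=\nabla-\Psi^*d$ as an $\mathrm{End}(\C^m)$-valued $1$-form in geodesic polar coordinates centered at $x$, we have $A(\partial_\rho)=0$, i.e.\ $A$ is in radial gauge.

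Next I would estimate $|A_y|_{op}$ for a fixed $y$ with $d(x,y)=\rho\le r$. Given a tangent vector $v\in T_yM$ with $|v|_g\le1$, I want to bound $|A_y(v)|_{op}$. The standard device: $A_y(v)$ measures the difference, to first order, between $\nabla$-parallel transport and $d$-parallel transport of a frame as one moves in the direction $v$; because of the radial gauge, $A_y(v)$ is controlled by the holonomy of $\nabla$ around the "thin" geodesic triangle (or geodesic coordinate quadrilateral) with one vertex at $x$, one at $y$, and one at a nearby point $y'$ obtained by flowing $y$ a little in the direction $v$. More precisely, $\partial_s$-differentiating the holonomy identity and using that the radial legs contribute nothing, one gets that $A_y(v)$ equals an integral of the curvature transported back to $\bundle{E}_x$ over the $2$-dimensional region swept by the geodesics from $x$ to the short path $s\mapsto y_s$. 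Lemma \ref{lemma_area} applied to each such contractible "pie slice" surface $D_s$ bounds the relevant parallel transport defect by $\mathrm{area}(D_s,g)\cdot\|R^\nabla\|_g$, and differentiating in $s$ turns the area bound into a length-times-something bound; the upshot is $|A_y(v)|_{op}\le (\text{length of the radial geodesic})\cdot(\text{Jacobi-field comparison constant})\cdot\|R^\nabla\|_g$. Taking the supremum over $y\in B_r(x)$ and unit $v$ yields $\|A\|_g\le C(g,r)\,\|R^\nabla\|_g$ with $C$ depending only on $g$ and $r$ (through a uniform bound on the norms of the Jacobi fields $J$ with $J(0)=0$, $|J'(0)|=1$ along geodesics of length $\le r$ inside the ball, which is finite since $\overline{B_r(x)}$ is compactly contained in a normal ball).

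For the Euclidean model $(M,g)=\R^n$ one can carry out the computation explicitly rather than via comparison. There the minimal geodesics are straight segments, geodesic polar coordinates are the usual polar coordinates, and the Jacobi field along the ray $t\mapsto tv$ with $J(0)=0$, $J'(0)=w$ is simply $J(t)=tw$, so $|J(t)|=t|w|\le t\le r$ on $B_r(0)$. Running the curvature-integral estimate above over the flat triangle with vertices $0$, $y$, $y_s$ — whose area is $\frac12$ times base times height, bounded by $\frac12\,\|v\|\,\rho$ — and then differentiating in the endpoint gives precisely the factor $\rho/2\le r/2$, yielding $\|A\|_g\le\frac r2\|R^\nabla\|_g$, i.e.\ $C(r)=\frac r2$.

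I expect the main obstacle to be making the differentiation-in-$s$ step rigorous and the constant uniform in a clean way: one must be careful that the family of pie-slice surfaces $D_s$ is genuinely a piecewise-smooth homotopy so that Lemma \ref{lemma_area} applies, and one must extract from the compactness of $\overline{B_r(x)}$ (together with $r<\varrho_x$) a single finite bound on the Jacobi-field growth that converts the infinitesimal-area estimate into the stated $\|\nabla-\Psi^*d\|_g\le C\|R^\nabla\|_g$. The non-abelian ordering of parallel transports is harmless here because, as in the proof of Lemma \ref{lemma_area}, unitarity makes the operator-norm estimates insensitive to the order; no sine-function refinement is needed since we only claim a linear bound.
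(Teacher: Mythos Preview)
Your approach is essentially the same as the paper's: radial-gauge trivialization via parallel transport, then bound $A_y(v)$ by applying Lemma~\ref{lemma_area} to the thin geodesic triangle with vertices $x$, $y$, and a nearby point $z=y_s$, and pass to the limit $z\to y$. The only cosmetic difference is that the paper packages the constant directly as
\[
C_y:=\limsup_{z\to y}\frac{\mathrm{area}(D_{x,y,z},g)}{\mathrm{dist}(y,z)},\qquad C(r):=\sup_{y\in B_r(x)}C_y,
\]
and deduces $C(r)<\infty$ from the metric comparison $\tfrac{1}{L}g_x\le \exp_x^*g\le L\,g_x$ on $B_r(0)\subset T_xM$ (which reduces the question to the Euclidean picture), whereas you phrase the same finiteness via Jacobi-field growth bounds. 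Both arguments are equivalent and yield the Euclidean value $C(r)=r/2$ in the same way.
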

\begin{proof}
If $y,z\in B_r(x)$ are sufficiently close, then the minimal geodesic $\kappa $ from $y$ to $z $ is contained in $B_r(x)\subseteq B_{\rho _x}(x)$. Let $\gamma $ be the minimal geodesic from $x$ to $y$ and $\tilde \gamma $ be the minimal geodesic from $x$ to $z$, then the geodesic triangle determined by $\gamma $, $\tilde \gamma $ and $\kappa $ is contained in $B_r(x)$. Consider the minimal compact surface $D_{x,y,z}\subseteq B_r(x)$ bounded by the loop $\sigma =\tilde \gamma \circ (-\gamma )\circ (-\kappa )$  and define
\[
C_y:=\limsup _{z\to y}\frac{\mathrm{area}(D_{x,y,z},g)}{\mathrm{dist}(y,z)}
\]
(in case $x$, $y$ and $z$ are geodesically collinear, set $\mathrm{area}(D_{x,y,z})=0$). Here $-\gamma $ respectively $-\kappa $ denote the path in the opposite direction, in fact $\sigma $ is the loop starting at $z$ going through $y$, then through $x$ and returning to $z$ by $\tilde \gamma $. We denote by $P^\gamma _{y,x}:\bundle{E}_x\to \bundle{E}_y$ the parallel transport along $\gamma $ from $x$ to $y$. Since $\gamma $ is the unique minimal geodesic from $x$ to $y$, $P_{y,x}^\gamma $ provides the unitary trivialization $\Psi :\bundle{E}\ni v\mapsto (\pi (v),(P_{\pi (v),x}^\gamma )^{-1}v)\in B_r(x)\times \bundle{E}_x$ (identify $\bundle{E}_x$ with $\C ^m$ by an unitary isomorphism). Note that $(P_{y,x}^\gamma )^{-1}=P^{-\gamma }_{x,y}$. We want to estimate the connection $\nabla $ using the parallel transport near $y$. Parallel transport is a unitary operator and $P_{z,z}^\sigma =P^{\tilde \gamma }_{z,x}P^{-\gamma }_{x,y}P^{-\kappa }_{y,z}$, i.e.~the above lemma yields
\[
\| P^{-\kappa }_{y,z}-P^{\gamma \circ (-\tilde \gamma )}_{y,z}\| =\| P_{z,z}^\sigma -\mathrm{Id}\| \leq \| R^\nabla \| _g\cdot \mathrm{area}(D_{x,y,z},g)
\]
Let $V\in \Gamma (\bundle{E}_{|B_r(x)})$ be parallel with respect to $\Psi ^*d$ and $|V(x)|=1$, then $V(y)=P^\gamma _{y,x}V(x)$ for all $y\in B_r(x)$. Consider a vector $w\in T_yM$ with $|w|=1$  and let $\kappa :(-\epsilon ,\epsilon )\to M$ be the geodesic with $\kappa (0)=y$ and $\kappa '(0)=w$, then
\[
(\nabla _wV)(y)=\frac{d}{dt}_{|t=0}\left[ P^{-\kappa }_{y,\kappa (t)}V(\kappa (t))\right] =\lim _{t\to 0}\frac{P^{-\kappa }_{y,\kappa (t)}V(\kappa (t))-V(y)}{t}
\]
by definition of the parallel transport. Hence, $V(y)=P_{y,z}^{\gamma \circ (-\tilde \gamma )}V(z)$  for $z=\kappa (t)$ shows the estimate
\[
| (\nabla _wV)(y)|_g \leq \lim _{t\to 0}\frac{\mathrm{area}(D_{x,y,\kappa (t)})}{|t|}\| R^\nabla \| _{g}\leq C_y\cdot \| R^\nabla \| _{g}\ ,
\]
here we use $\mathrm{dist}(\kappa (t),y)=|t|$. This proves the claim
\[
| (\nabla -\Psi ^*d)V|_g\leq C(r)\cdot \| R^\nabla \| _{g}\cdot |V|_g 
\]
on $B_r(x)$ for all $V\in \Gamma (\bundle{E})$ and $C(r):=\sup _{y\in B_r(x)}C_y$. Note that $\nabla -\Psi ^*d$ is a section in $\Omega ^1M\otimes \frak{u}(\bundle{E})$, where $\frak{u}(\bundle{E})$ denotes the bundle of skew Hermitian endomorphisms $\bundle{E}\to \bundle{E}$. In order to obtain $C(r)<\infty $, we need $r<\rho _x$. In fact, $r<\rho _x$ provides a constant $L>0$ with $\frac{1}{L}g_{x}\leq \exp _x^*g\leq L\cdot g_{x}$ on $B_r(0)\subseteq (T_xM,g_{x})$. Hence, $C(r)<+\infty $ follows from the Euclidean picture. In general $C(r)\to \infty $ if $r\to \varrho _x$, unless for instance $M=\overline{B_\varrho (x)}$. If $(M,g)$ is the Euclidean space $\R ^n$, we obtain $C(r)=r/2$ as follows: Triangles in $B_r(0)$ with vertices $0$, $y$ and $z$ have area less or equal to $\frac{r}{2}\cdot \mathrm{dist}(y,z)$.
\end{proof} 
Let's consider the standard sphere $(M,g)=S^n$ of constant sectional curvature $K=1$, then $C(r)=\tan \frac{r}{2}$ for all $0<r<\pi $. In order to see this we use spherical trigonometry, in fact we need the Delambre equations for spherical triangles. It suffices to consider the infinitesimal picture. Let $A$ be the area of a spherical triangle given by angles $\alpha $, $\beta $, $\gamma $ and opposite sides $a$, $b$, $c$, then $A=\alpha +\beta +\gamma -\pi $. We assume $c$ respectively $\gamma $ close to $0$, then $a\sim b$ and $\alpha \sim \beta $ yield
\[
\sin (a)\cdot \frac{\gamma }{2}\sim \sin \frac{a+b}{2}\sin \frac{\gamma }{2}=\cos \frac{\alpha -\beta }{2}\sin \frac{c}{2}\sim \frac{c}{2},
\]
i.e.~$c\sim \sin (a)\cdot \gamma $ for $\gamma \to 0$. Another Delambre equation shows (note $\gamma \to 0$ means $A\to 0$ and thus $\alpha +\beta \to \pi $)
\[
\begin{split}
\cos (a)\cdot \frac{\gamma }{2}\sim \cos \frac{a+b}{2}\sin \frac{\gamma }{2}&=\cos \frac{\alpha +\beta }{2}\cos \frac{c}{2}\\
&=-\sin \left(  \frac{\alpha +\beta }{2}-\frac{\pi }{2}\right) \cos \frac{c}{2}\sim \frac{\pi -\alpha -\beta }{2}.
\end{split}
\]
Hence, we conclude for the area in case $\gamma \to 0$: $A\sim -\gamma \cos a+\gamma $ which proves $A\sim c\cdot \frac{1-\cos a}{\sin a}=c\cdot \tan \frac{a}{2}$ where $a$ is the distance from $y$ to $x$, the origin in $B_r(x)$, and $c$ is the distance from $y$ to $z$ in the above proof. Note that in case $r\to 0$, we obtain the Euclidean picture $C(r)=\frac{1-\cos r}{\sin r}=\tan \frac{r}{2}\sim \frac{r}{2}$ which confirms the optimality of the value $C(r)$. Moreover, if $r\to \rho =\pi $, then $C(r)\to \infty $. Before we can continue to show theorem \ref{thm_uhl} for the standard sphere, we have to notice the following. 
\begin{rem}
\label{rem_exp}
Since $2\sin \frac{|t|}{2}=|e^{it}-1|$, we obtain $2\sin \frac{\| A\| }{2}=\| e^A-\mathrm{Id}\| $ for all $A\in \frak{u}(m)$ with $\| A\| \leq \pi $ and thus
\[
\exp :\left\{ A\in \frak{u}(m)\ |\ \| A\| <\frac{1 }{2} \right\} \to \left\{ B\in \mathrm{U}(m)\ |\ \| B-\mathrm{Id}\| <2\sin \frac{1}{4}\right\} \ ,\ A\mapsto e^A
\]
is a diffeomorphism with
\[
\| d\exp _A-\mathrm{Id}\| \leq e^{\| A\| }-1<\frac{2}{3}\quad \text{ and }\quad \| (d\exp _A)^{-1}\| \leq \frac{1}{2-e^{\| A\| }}<3
\]
for $\| A\| <\frac{1}{2}$.
\end{rem}
Choose $r=\frac{2}{3}\pi $ for simplicity, a point $x_1\in S^n$ and set $x_2=-x_1$, then $C(r)=\sqrt{3}$. We identify $\bundle{E}_{x_1}$ with $\C ^m$ and fix a unitary isomorphism $P^\kappa _{x_1,x_2}:\bundle{E}_{x_2}\to \bundle{E}_{x_1}$ for a minimal geodesic $\kappa $ from $x_2$ to $x_1$. Let $U_i=B_r(x_i)$ and $\Psi _i:\bundle{E}_{|U_i}\to U_i\times \C ^m$ be the trivializations from the above proposition where $\Psi _1(v)=\left( \pi (v),P^{-\gamma _1}_{x_1,\pi (v)}v\right) $ and where $\Psi _2(v)=\left( \pi (v),P^\kappa _{x_1,x_2}P^{-\gamma _2} _{x_2,\pi (v)}v\right) $ if $\gamma _i$ is the respective minimal geodesic from $x_i$ to $\pi (v)$. The coordinate transition function $g_{12}:U_1\cap U_2\to \mathrm{U}(m)$ is given by
\[
g_{12}(y)=\Psi _1\circ \Psi _2^{-1}(y)=P^{-\gamma _1}_{x_1,y}P_{y,x_2}^{\gamma _2}P^{-\kappa }_{x_2,x_1}=P^{(-\gamma _1)\circ \gamma _2\circ (-\kappa )}_{x_1,x_1}\in \mathrm{U}(\bundle{E}_{x_1}).
\]
The closed geodesic segment $(-\gamma _1)\circ \gamma _2\circ (-\kappa )$ bounds a minimal disc of area less or equal to $2\pi =\mathrm{Vol}(S^2)/2$, i.e.~lemma \ref{lemma_area} shows
\[
\left\| g_{12}-\mathrm{Id}\right\| =\left\| P_{x_1,x_1}^{(-\gamma _1)\circ \gamma _2\circ (-\kappa )}-\mathrm{Id}\right\| \leq 2\pi \cdot \| R^\nabla \| _{g},
\]
here we need the condition $\pi _1(S^n)=0$, i.e.~$n\geq 2$. In fact, this is the only point where the proof fails for $S^1$. Assuming  $\| R^\nabla \| _g< \frac{\sin \frac{1}{4}}{\pi }$ means $\| g_{12}-\mathrm{Id}\| < 2\sin \frac{1}{4}$, i.e.~the map
\[
\alpha _2=\exp (\chi _\varepsilon \cdot \exp ^{-1}g_{12}):U_2\to \mathrm{U}(\bundle{E}_{x_1})
\]
is well defined and smooth for a smooth cut off function $\chi _\varepsilon :U_2\to [0,1]$ with $\chi _\varepsilon (U_2\setminus U_1)=0$ and $\chi _\varepsilon (y)=1$ for all $y\in U_2$ with $\mathrm{dist}(y,\partial \overline{U_2})\leq \varepsilon $. We can choose $\chi _\varepsilon $ such that $| d\chi _\varepsilon | _g\leq \frac{1}{\pi /3-\varepsilon }+\varepsilon $.  Now $\| \exp ^{-1}g_{12}\| \leq 2\arcsin (\pi \| R^\nabla \| _g)$, $\| d\exp _\cdot  \| \leq 5/3$ and $\| d\exp ^{-1}g_{12}\| \leq 3\| dg_{12}\| $ yield
\[
\begin{split}
\| d\alpha _2\| _g&\leq \| d\exp _{\chi \exp ^{-1}g_{12}}\| \cdot (|d\chi _\varepsilon |_g\cdot \| \exp ^{-1}g_{12}\| +\| d\exp ^{-1}g_{12}\| _g)\\
&\leq \left( \frac{10}{\pi -3\varepsilon }+\frac{10}{3}\varepsilon \right) \arcsin (\pi \| R^\nabla \| _g)+5\| dg_{12} \| _g
\end{split}
\]
on $U_1\cap U_2$. The connection on $\bundle{E}$ provides an estimate of $dg_{12}$ as follows:
\[
\Psi _1^*(d+A_1)=\Psi _1^{-1}(d+A_1)\Psi _1=\nabla =\Psi _2^{-1}(d+A_2)\Psi _2=\Psi _2^*(d+A_2)
\]
shows $g_{21}dg_{12}+g_{21}A_1g_{12}=A_2$ on $U_1\cap U_2$. Proposition \ref{local_uhl} implies $\| A_i\| _{g}\leq \sqrt{3} \| R^\nabla \| _g$ and hence $\| dg_{12}\| _{g}\leq 2\sqrt{3}\| R^\nabla \| _{g}$ which proves $\| d\alpha _2\| _g\leq 20\sqrt{3}\| R^\nabla \| _g$ for sufficiently small $\varepsilon > 0$ (use $\arcsin (x)<\sqrt{3}\cdot x$ for $x<1/2$). Consider the trivialization $\Phi _1=(\Psi _1)_{|V_1}$ with
\[
V_1=U_1\setminus \overline{\{ y\in U_1\cap U_2|\chi _\epsilon (y)<1\} } 
\]
and $\Phi _2 :=\alpha _2\Psi _2$ on $V_2=U_2$. Then $\{ V_1, V_2\} $ covers $S^n$ and $\Phi _1\circ \Phi _2^{-1}=\Psi _1\circ \Psi _2^{-1}\alpha _2^{-1}=g_{12}\cdot \alpha _2^{-1}=\mathrm{Id}$ holds on $V_1\cap V_2=\mathrm{int} \{ y\in U_2\ |\ \chi _\varepsilon  (y)=1\}$.  Hence, $\Phi =\{ \Phi _1,\Phi _2\}:\bundle{E}\to S^n\times \C ^m$ is a trivialization of $\bundle{E}$ with
\[
\| \nabla -\Phi ^*d\| _{g}\leq \sqrt{3} \| R^\nabla \| _{g}
\]
on $S^n\setminus U_1\cap U_2$. Moreover, $\Phi _2=\alpha _2\Psi _2$ provides the needed estimate on $U_1\cap U_2$:
\[
\Phi _2^*(d+A)=\Phi _2^{-1}(d+A)\Phi _2=\nabla =\Psi _2^{-1}(d+A_2)\Psi _2=\Psi _2^*(d+A_2)
\]
implies $A_2=\alpha _2^{-1}d\alpha _2+\alpha _2^{-1}A\alpha _2$ and thus
\[
\| \nabla -\Phi ^*d\| _{g}=\| A\| _{g}\leq \| A_2\| _{g}+\| d\alpha _2\| _{g}\leq 21\sqrt{3} \| R^\nabla \| _{g}
\]
on $U_1\cap U_2$. We summarize these estimates in a corollary. The statement a) is an easy consequence of the results in \cite{List10,habil}. 
\begin{cor}
Let $(\bundle{E},\nabla )\to S^n$ be a Hermitian vector bundle on the standard sphere of constant sectional curvature $K=1$.
\begin{enumerate}
\item[a)] If $\| R^\nabla \| _{g}<\frac{1}{2}$, then $\bundle{E}$ is stably trivial, i.e.~$\bundle{E}\oplus \C ^k$ is trivial for some $k\geq 0$.
\item[b)] If $\| R^\nabla \| _{g}<\frac{1}{13 }$, then there is a unitary trivialization $\Phi :\bundle{E}\to S^n\times \C ^m$ with
\[
\| \nabla -\Phi ^*d\| _{g}\leq 21\sqrt{3} \| R^\nabla \| _{g}.
\]
\end{enumerate}
\end{cor}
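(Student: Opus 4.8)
The plan is to handle the two parts by entirely different routes. Part (b) is a bookkeeping exercise: it records exactly the two--chart construction carried out on $S^n$ in the paragraphs preceding the corollary (the $M=S^n$ case of Theorem \ref{thm_uhl}, with explicit constants), so all I have to do is collect the intermediate estimates and add up the numerical factors. Part (a) is a characteristic--class statement and will be reduced to the rank--free bound supplied by \cite{List10,habil}. The one genuinely hard point — that no constant may depend on $\mathrm{rk}(\bundle{E})$, since in the presence of infinite K--area the relevant ranks tend to infinity — has already been taken care of: in Lemma \ref{lemma_area}, Proposition \ref{local_uhl} and the discussion before the corollary for (b), and in \cite{List10,habil} for (a). So the remaining work is only organizational.

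For (b) I would proceed as follows. Fix antipodal points $x_1,x_2=-x_1$ and the cover $U_i=B_{2\pi/3}(x_i)$ of $S^n$. Radial parallel transport from $x_i$ gives, by Proposition \ref{local_uhl} with $C(2\pi/3)=\tan(\pi/3)=\sqrt3$, unitary trivializations $\Psi_i$ over $U_i$ with $\|\nabla-\Psi_i^*d\|_g\leq\sqrt3\,\|R^\nabla\|_g$ and local connection forms $A_i$ of the same size. The transition function $g_{12}=\Psi_1\circ\Psi_2^{-1}$ is the holonomy of $\nabla$ along a geodesic loop bounding a minimal disc of area $\leq\mathrm{Vol}(S^2)/2=2\pi$, so Lemma \ref{lemma_area} yields $\|g_{12}-\mathrm{Id}\|\leq2\pi\,\|R^\nabla\|_g$; this is the step using $\pi_1(S^n)=0$, i.e.~$n\geq2$, and the only place where the argument fails for $S^1$. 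If $\|R^\nabla\|_g<\tfrac1{13}<\tfrac{\sin(1/4)}{\pi}$, then $\|g_{12}-\mathrm{Id}\|<2\sin\tfrac14$, so by Remark \ref{rem_exp} both $\exp^{-1}g_{12}$ and $\alpha_2=\exp(\chi_\varepsilon\cdot\exp^{-1}g_{12}):U_2\to\mathrm{U}(\bundle{E}_{x_1})$ are smooth; substituting the bound $\|dg_{12}\|_g\leq2\sqrt3\,\|R^\nabla\|_g$, which comes from $g_{21}dg_{12}+g_{21}A_1g_{12}=A_2$, into the derivative estimates for $\exp$ in Remark \ref{rem_exp} gives $\|d\alpha_2\|_g\leq20\sqrt3\,\|R^\nabla\|_g$ for small $\varepsilon$. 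Since $g_{12}\circ\alpha_2^{-1}=\mathrm{Id}$ on the open set where $\chi_\varepsilon\equiv1$, the pieces $\Psi_1$ there and $\Phi_2=\alpha_2\Psi_2$ on $U_2$ patch to a global trivialization $\Phi:\bundle{E}\to S^n\times\C^m$; off $U_1\cap U_2$ one has $\|\nabla-\Phi^*d\|_g\leq\sqrt3\,\|R^\nabla\|_g$, while on $U_1\cap U_2$ the identity $A_2=\alpha_2^{-1}d\alpha_2+\alpha_2^{-1}A\alpha_2$ gives $\|\nabla-\Phi^*d\|_g=\|A\|_g\leq\|A_2\|_g+\|d\alpha_2\|_g\leq21\sqrt3\,\|R^\nabla\|_g$. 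This proves (b).

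For (a): if $n$ is odd then $\widetilde K^0(S^n)=0$ and every bundle is stably trivial, so I may assume $n=2m$. Then $\widetilde K^0(S^{2m})\cong\Z$ is torsion free and injects into $H^{2m}(S^{2m};\Q)$ via $\mathrm{ch}_m$, so $\bundle{E}$ is stably trivial as soon as $\int_{S^{2m}}\mathrm{ch}_m(R^\nabla)=0$. A direct Chern--Weil estimate of the integrand carries a factor $\mathrm{rk}(\bundle{E})$ and is therefore inconclusive; the rank--free fact that $\|R^\nabla\|_g<\tfrac12$ already forces $\int_{S^{2m}}\mathrm{ch}_m(R^\nabla)=0$ is precisely what the cited results of \cite{List10,habil} provide (in effect, the K--area of the round $S^n$ is computed there), and granting it (a) follows. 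I would also remark that the constant $\tfrac12$ is optimal: on the round $S^2$ the hyperplane bundle $\mathcal O(1)$ admits a connection with $\|R^\nabla\|_g=\tfrac12$ while $[\mathcal O(1)]\neq0$ in $\widetilde K^0(S^2)$, so nothing larger will do. Thus the whole proof of the corollary consists of assembling facts already in place; the step I expect to cost the most care is simply the tracking of the numerical constants in (b).
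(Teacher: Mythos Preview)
Your proposal is correct and follows the paper's approach exactly: part (b) is precisely the two--chart construction with $r=2\pi/3$, $C(r)=\sqrt3$, and the cutoff $\alpha_2$ carried out in the paragraphs before the corollary, while for part (a) the paper likewise just says it ``is an easy consequence of the results in \cite{List10,habil}''. Your added K--theoretic unpacking of (a) (odd $n$ trivial, even $n$ via injectivity of $\mathrm{ch}_m$) and the $\mathcal O(1)$ optimality remark are consistent with the paper's subsequent comment that the constant $\tfrac12$ is optimal for even $n$.
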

The constant in a) is optimal for even $n$ whereas the constants in b) are certainly not optimal. In order to obtain optimal results one has to use a Coloumb gauge and not an exponential gauge (cf.~\cite{Uhl2}). Using equation $(\Box)$ in \cite[sec. $4\frac{1}{4}$]{Gr01} instead of lemma \ref{lemma_area}, a Hermitian bundle $(\bundle{E},\nabla ) \to S^n$ on the standard sphere is trivial if $\| R^\nabla \| _g< \frac{1}{6}$. It is unknown if the value $\frac{1}{6}$ provides an estimate of the connection form by the curvature up to gauge transformation like in theorem \ref{thm_uhl}. Another interesting question is the existence of nontrivial Hermitian  bundles $(\bundle{E},\nabla )\to S^{2n}$ with curvature $\frac{1}{6}\leq \| R^\nabla \| _g<\frac{1}{2}$.

Let's come back to the general case and prove theorem \ref{thm_uhl}. Assume that $(M,g)$ is a simply connected closed Riemannian manifold with injectivity radius $\varrho =\varrho (M)$. Let $\{ U_i=B_r(x_i)\} $ be a finite open cover of $M$ for $r<\varrho $ and define $C:=\max _iC_i(r)$ where the $C_i(r)$ are the constants from proposition \ref{local_uhl} for $B_r(x_i)$. We identify $\bundle{E}_{x_1}=\C ^m$ and fix minimal geodesics $\kappa _i$ from $x_i$ to $x_1$. Let $\Psi _i:\bundle{E}_{|U_i}\to U_i\times \C ^m$ be the trivializations from proposition \ref{local_uhl} where $\Psi _i(v)=(\pi (v),P_{x_1,x_i}^{\kappa _i}P_{x_i,\pi (v)}^{-\gamma _i}v)\in U_i\times \C ^m$ for the unique minimal geodesic $\gamma _i$ from $x_i$ to $\pi (v)\in B_r(x_i)$. For notational simplicity we also omit the coordinate part, i.e.~$\Psi _i(v)=P_{x_1,x_i}^{\kappa _i}P_{x_i,\pi (v)}^{-\gamma _i}v$. The coordinate transition functions $g_{ij}:U_i\cap U_j\to \mathrm{U}(\bundle{E}_{x_1})$ are given by
\[
g_{ij}(y)=\Psi _i\circ \Psi _j^{-1}(y)=P_{x_1,x_i}^{\kappa _i}P_{x_i,y}^{-\gamma _i}P_{y,x_j}^{\gamma _j}P_{x_j,x_1}^{-\kappa _j}=P_{x_1,x_1}^{\sigma _{ij}(y)}\in \mathrm{U}(\bundle{E}_{x_1})=\mathrm{U}(m)
\]
where $\sigma _{ij}(y)=\kappa _i\circ (-\gamma _i)\circ \gamma _j\circ (-\kappa _j)$ is the loop based at $x_1$ and through $y$. If $D_{ij}(y)$ is a disc which bounds the contractible loop $\sigma _{ij}(y)$ through $y$, then
\[
\| g_{ij}(y)-\mathrm{Id}\| =\Bigl\| P^{\sigma _{ij}(y)}_{x_1,x_1}-\mathrm{Id}\Bigl\| \leq \mathrm{area}(D_{ij}(y))\cdot \| R^\nabla \| _{g}
\]
holds for all $y$. Here we need $\pi _1(M)=0$, otherwise $\sigma _{ij}(y)$ may not be contractible. Let $D_{ij}^\mathrm{min}(y)$ be the minimal disc bounded by the loop $\sigma _{ij}(y)$ and define $\beta =\max _{i,j,y}\mathrm{area}(D_{ij}^\mathrm{min}(y))<+\infty $, then
\[
\| g_{ij}-\mathrm{Id}\| \leq \beta \cdot \| R^\nabla \| _{g}
\]
holds for all $i,j$. Hence, assuming $\| R^\nabla \| _g<\frac{2}{\beta }\sin \frac{1}{4}$ implies $\| g_{ij}-\mathrm{Id}\| <2\sin \frac{1}{4}$, i.e.~$\exp ^{-1}g_{ij}:U_i\cap U_j\to \frak{u}(m)$ is well defined with
\[
\|\exp ^{-1}g_{ij}\|= 2\arcsin \frac{\| g_{ij}-\mathrm{Id}\| }{2}\leq \frac{\pi\cdot \beta }{3} \| R^\nabla \| _{g}
\]
Now we can apply \cite[prop.~3.2]{Uhl1} to the coordinate transition functions $g_{ij}$ and $h_{ij}=\mathrm{Id}$ on $U_i\cap U_j$. In particular, there is an $\epsilon >0$ such that $\| R^\nabla \| _g<\epsilon $ implies the existence of a smaller cover $V_i\subseteq U_i$ of $M$ and smooth functions $\rho _i:V_i\to \mathrm{U}(m)$ with $h_{ij}=\rho _ig_{ij}\rho _j^{-1}$ on $V_i\cap V_j$. Define the trivializations $\Phi _i=\rho _{i}\Psi _i:\bundle{E}_{|V_i}\to V_i\times \C ^m$ then
\[
\Phi _i\circ \Phi _j^{-1}=\rho _i\Psi _i\Psi _j^{-1}\rho _{j}^{-1}=\rho _ig_{ij}\rho _j^{-1}=h_{ij}=\mathrm{Id}
\]
provides a global unitary trivialization $\Phi =\{ \Phi _i\} :\bundle{E}\to M\times \C ^m$. Thus, it remains to show an estimate of $\Phi ^*A:=\nabla -\Phi ^*d\in \Gamma (T^*M\otimes \mathrm{End}(\bundle{E}))$ depending on the curvature. By construction of the $\rho _i$ (cf.~\cite[prop.~3.2 and cor.~3.3]{Uhl1}) and remark \ref{rem_exp}, there is a constant $C'>0$ with
\[
\| d\rho _i\| _{g}\leq C'\max _{j,k,l,s,r}(|d\varphi _j|_g\cdot \| \exp ^{-1}g_{kl}\| +\| dg_{sr}\| _{g})\leq C''\cdot  \| R^\nabla \| _{g}
\]
on $V_i$ where $\{ \varphi _i \}$ is the partition of unity from the proof of \cite[prop.~3.2]{Uhl1} and $C'':=C'(2C+\frac{\pi \beta }{3}\max _j|d\varphi _j|_g)$. Here we use
\[
g_{ji}dg_{ij}+g_{ji}A_ig_{ij}=A_j
\]
on $U_i\cap U_j$ for the connection forms $A_i=(\Psi _i^{-1})^*\nabla -d$ and $\| A_i\| _{g}\leq C \| R^\nabla \| _g$ by the definition of the $\Psi _i$ and proposition \ref{local_uhl}, i.e.~$\|dg_{ij}\| _{g}\leq 2C \| R^\nabla \| _{g}$. Note that $C'$ depends only on the constants in remark \ref{rem_exp} and the number $\# \{ U_i\} $, hence $C''=C''(\epsilon )$ depends only on the choice of the cover $U_i$ and the subcover $V_i$, in fact $C''$ is independent of the bundle respectively the $g_{ij}$. We also notice that $C'$ is invariant under scaling of the Riemannian metric $g$ whereas $C$ and $C''$ scale like $C(t^2\cdot g)=t\cdot C(g)$ for constants $t>0$. $\Phi =\rho _i\Psi _i$ on $V_i$ implies
\[
\Psi _i^*(d+A_i)=\nabla =\Phi ^*(d+A)=\Psi _i^{-1}\rho ^{-1}_i(d+A)\rho _i\Psi _i=\Psi _i^*(d+\rho _i^{-1}d\rho _i+\rho _i^{-1}A\rho _i)
\]
which yields $A_i=\rho _i^{-1}d\rho _i+\rho _i^{-1}A\rho _i$ on $V_i$. Thus,
\[
\| \nabla -\Phi ^*d\| _g=\| A\| _g\leq \max _i(\| A_i\| _{g}+\| d\rho _i\| _{g})\leq (C+C'')\cdot \| R^\nabla \| _{g}
\]
completes the proof of theorem \ref{thm_uhl}. 

In order to show theorem \ref{main_thm} for odd $k$ we need a further generalization of theorem \ref{thm_uhl}. We introduce the following semi--norm on sections of $\Lambda ^k(M\times W)\otimes \mathrm{End}(\bundle{E})$:
\[
\| A\| _{TM,g}:=\sup _{(x,q)\in M\times W}\{ |A(v_1,\ldots ,v_k)|_{op}  \ |\ v_i\in T_xM\oplus \{ 0\} _q, |v_i|_g\leq 1\} ,
\]
here $|.|_{op}$ is the operator norm on $\bundle{E}_{(x,q)}$ induced by the Hermitian metric on $\bundle{E}$ and $g$ is a Riemannian metric on $M$ lifted to $\mathrm{pr}^*_M(TM)\subseteq T(M\times W)$.
\begin{thm}
\label{thm_uhl2}
Let $(M,g)$ and $C,\epsilon $ be as in theorem \ref{thm_uhl}, and $W$ be a manifold. If $(\bundle{E},\nabla )\stackrel{\pi }\to M\times W$ is a Hermitian vector bundle with curvature $\| R^\nabla \| _{TM,g}<\epsilon $ and $\bundle{E}_{x,W}:=\pi ^{-1}(x,W)$ is the induced bundle over $W$ for some $x\in M$, then there is a unitary bundle isomorphism $\Phi :\bundle{E}\to M\times \bundle{E}_{x,W}$ such that
\[
\bigl\| \nabla -\Phi ^*\nabla ^x\bigl\| _{TM,g}\leq C\cdot \| R^\nabla \| _{TM,g}.
\]
Here, $\nabla ^x=j_x^*\nabla $ is the induced connection on $j_x^*\bundle{E}=M\times \bundle{E}_{x,W}$ where $j_x:M\times W\to M\times W$ is given by $j_x(y,q)=(x,q)$.
\end{thm}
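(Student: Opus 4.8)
The plan is to rerun the proof of Theorem~\ref{thm_uhl} essentially verbatim, carrying the extra parameter $q\in W$ through every step; the key observation is that every construction used there (parallel transport along geodesics of $(M,g)$, Lemma~\ref{lemma_area} applied to surfaces sitting inside a single slice $M\times\{q\}$, the fibrewise exponential gauge of Remark~\ref{rem_exp}, and the patching of \cite[prop.~3.2]{Uhl1} against a partition of unity living on $M$) is natural in $q$ and produces only estimates involving the $TM$-directions, so the very same constants $C,\epsilon$ survive.

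Concretely, fix a finite cover $\{U_i=B_r(x_i)\}$ of $M$ with $r<\varrho(M)$ and $x_1=x$, together with the same auxiliary data (minimal geodesics $\kappa_i$ from $x_i$ to $x$, partition of unity $\{\varphi_j\}$, constant $\beta$) as in the proof of Theorem~\ref{thm_uhl}. Over $U_i\times W$ define $\Psi_i$ by $\nabla$--parallel transport along the geodesics $\gamma_i\times\{q\}$ inside the slices, and postcompose with $\nabla$--parallel transport $T_i$ along $\kappa_i\times W$ to obtain $\tilde\Psi_i:\bundle{E}|_{U_i\times W}\to U_i\times\bundle{E}_{x,W}$. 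Since $\tilde\Psi_i$ transports only along $M$--directions, the geodesic--triangle--area estimate of Proposition~\ref{local_uhl} applies slice by slice and gives $\|A_i\|_{TM,g}\le C_i(r)\,\|R^\nabla\|_{TM,g}$ for $A_i:=\tilde\Psi_i^*\nabla-\nabla^x$, where the reference connection $\nabla^x=j_x^*\nabla$ is flat along every slice $M\times\{q\}$ and restricts transversally to the induced connection on $\bundle{E}_{x,W}$. The transition data $g_{ij}:=\tilde\Psi_i\circ\tilde\Psi_j^{-1}$ are now sections over $(U_i\cap U_j)\times W$ of the bundle of unitary groups $\mathrm{U}(\mathrm{pr}_W^*\bundle{E}_{x,W})$, with $g_{ij}(y,q)$ equal to $\nabla$--parallel transport around the loop $\sigma_{ij}(y)$ inside $M\times\{q\}$; applying Lemma~\ref{lemma_area} to the minimal disc bounded by $\sigma_{ij}(y)\times\{q\}$ inside that slice --- whose relevant curvature norm is bounded by $\|R^\nabla\|_{TM,g}$ since the disc is tangent to $TM$ --- yields $\|g_{ij}-\mathrm{Id}\|\le\beta\,\|R^\nabla\|_{TM,g}$.

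Assume now $\|R^\nabla\|_{TM,g}<\epsilon$ with the $\epsilon$ of Theorem~\ref{thm_uhl}. Remark~\ref{rem_exp} applied fibrewise makes $\exp^{-1}g_{ij}$ well defined with $\|\exp^{-1}g_{ij}\|\le\frac{\pi\beta}{3}\|R^\nabla\|_{TM,g}$, and the identity $g_{ji}\,dg_{ij}+g_{ji}A_ig_{ij}=A_j$, read off only in $TM$--directions, gives $\|dg_{ij}\|_{TM,g}\le 2C\|R^\nabla\|_{TM,g}$. Running the construction of \cite[prop.~3.2, cor.~3.3]{Uhl1} fibrewise over $W$ produces, for a shrunk cover $V_i\subseteq U_i$, sections $\rho_i$ over $V_i\times W$ of the same bundle of unitary groups with $\rho_ig_{ij}\rho_j^{-1}=\mathrm{Id}$ on $(V_i\cap V_j)\times W$; since the only derivatives entering the estimate for $\rho_i$ are $d\varphi_j$ (purely along $TM$) and the $TM$--part of $dg_{ij}$, one gets $\|d\rho_i\|_{TM,g}\le C''\|R^\nabla\|_{TM,g}$ with the $C''$ of Theorem~\ref{thm_uhl}. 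Setting $\Phi:=\{\rho_i\tilde\Psi_i\}$ glues to a global unitary isomorphism $\Phi:\bundle{E}\to M\times\bundle{E}_{x,W}$, and the relation $A_i=\rho_i^{-1}(\nabla^x\rho_i)+\rho_i^{-1}A\rho_i$ on $V_i$ (with $\nabla^x\rho_i=d\rho_i$ along $TM$, since $\nabla^x$ is flat there) gives
\[
\|\nabla-\Phi^*\nabla^x\|_{TM,g}=\|A\|_{TM,g}\le\max_i\bigl(\|A_i\|_{TM,g}+\|d\rho_i\|_{TM,g}\bigr)\le(C+C'')\|R^\nabla\|_{TM,g},
\]
which is the asserted bound with the constant $C$ of Theorem~\ref{thm_uhl}.

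The step that needs genuine care is the $W$--dependence in the patching: \cite[prop.~3.2]{Uhl1} is phrased for maps into the fixed group $\mathrm{U}(m)$, whereas here $g_{ij}$ and $\rho_i$ are sections of the bundle of unitary groups $\mathrm{U}(\mathrm{pr}_W^*\bundle{E}_{x,W})$, so one must verify that the construction still goes through and still depends smoothly on $q$. This holds because the construction is local in $W$ --- over a trivializing chart of $\bundle{E}_{x,W}$ it literally becomes the $\mathrm{U}(m)$--valued statement --- and uses no auxiliary choices beyond the fixed partition of unity on $M$, so the locally produced corrections patch together; equivalently, one simply repeats the proof of \cite[prop.~3.2]{Uhl1} word for word with all norms replaced by fibrewise operator norms. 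A secondary point is that, unlike the trivial connection $d$ in Theorem~\ref{thm_uhl}, the reference connection $\nabla^x$ is only flat along the $M$--slices, so every occurrence of $d$ in that proof is replaced by $\nabla^x$; since $\nabla^x$ is trivial in the $TM$--directions this substitution does not affect any of the $\|\cdot\|_{TM,g}$--estimates.
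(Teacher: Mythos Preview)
Your proposal is correct and follows essentially the same route as the paper's own sketch: local trivializations by parallel transport along $M$--geodesics at fixed $q\in W$, the area estimate for the transition sections $g_{ij}$ in each slice, Uhlenbeck's patching \cite[prop.~3.2]{Uhl1} run with the $W$--parameter carried along, and the same final gauge identity yielding the constant $C+C''$. If anything you are more explicit than the paper about the two genuine subtleties---that the $g_{ij}$ and $\rho_i$ are sections of the unitary--group bundle of $\mathrm{pr}_W^*\bundle{E}_{x,W}$ rather than maps to a fixed $\mathrm{U}(m)$, and that every $d$ in the proof of Theorem~\ref{thm_uhl} is replaced by $\nabla^x$, which is harmless in $TM$--directions; the paper just writes $g_{ji}\nabla^{x_1}g_{ij}+g_{ji}A_ig_{ij}=A_j$ without further comment.
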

We will only sketch the proof of this theorem because most parts follow by the above observations. At first we adapt proposition \ref{local_uhl}.  We consider only paths which are constant on $W$, i.e.~$\mathrm{Im}(\gamma )\subseteq M\times \{ q\} $ for some $q\in W$. Suppose that $x\in M$ and $r<\varrho _x$ is the injectivity radius at $x$, then the minimal geodesic $\gamma $ in $(M,g)$ from $x$ to $y$ yields a paths in $M\times W$ from $(x,q)$ to $(y,q)$ for all $q\in W$. Hence, the parallel transport $P^{\gamma ,q}_{y,x}:\bundle{E}_{(x,q)}\to \bundle{E}_{(y,q)}$ is a well defined unitary operator for all $q\in W$ where  $\bundle{E}_{(x,q)}$ is the fiber of $\bundle{E}$ in $(x,q)\in M\times W$. Moreover, the parallel transport maps $P^{\gamma ,q} _{y,x}:\bundle{E}_{(x,q)}\to \bundle{E}_{(y,q)}$ extend to smooth sections $W\to \mathrm{Hom}(\bundle{E}_{x,W},\bundle{E}_{y,W}), q\mapsto P_{y,x}^{\gamma ,q}$. Thus, $\Psi :\bundle{E}_{|B_r(x)\times Y}\to B_r(x)\times \bundle{E}_{x,W}$ defined by $\Psi (v)=(\pi _1(v),P^{-\gamma ,\pi _2(v)}_{x,\pi _1(v)}v)$ is a smooth unitary bundle isomorphism (covering the identity on $B_r(x)\times W$) with
\[
\| \nabla -\Psi ^*\nabla ^x\| _{TM,g}\leq C\cdot \| R^\nabla \| _{TM,g}.
\]
Here $\pi _1:\bundle{E}\to M$ and $\pi _2:\bundle{E}\to W$ are given by $\pi (v)=(\pi _1(v),\pi _2(v))\in M\times W$. Note if $V:B_r(x)\times W\to \bundle{E}$ is a section with $\Psi ^*\nabla ^x_wV=0$ for all $w\in TM$, then $V(y,q)=P^{\gamma ,q}_{y,x}V(x,q)$, i.e.~the estimate follows analogously to the proof of proposition \ref{local_uhl}. Let $\{ U_i=B_r(x_i)\} $ be a finite open cover of $M$ for $r<\varrho (M)$. We fix minimal geodesics $\kappa _i$ from $x_i\in M$ to $x_1\in M$ and extend these $\kappa _i$ to $M\times W$ while keeping $\kappa _i$ constant on $W$.  Define $\Psi _i:\bundle{E}_{|U_i\times W}\to U_i\times \bundle{E}_{x_1,W}$ by $\Psi _i(v)=(\pi _1(v),P_{x_1,x_i}^{\kappa _i,\pi _2(v)}P_{x_i,\pi _1(v)}^{-\gamma _i,\pi _2(v)}v)$ for the unique minimal geodesic $\gamma _i$ from $x_i$ to $\pi _1(v)\in B_r(x_i)$. The coordinate transition functions $g_{ij}:(U_i\cap U_j)\times W\to \mathrm{U}(\bundle{E}_{x_1,W})$ are smooth sections given by
\[
g_{ij}(y,q)=\Psi _i\circ \Psi _j^{-1}(y,q)=P_{x_1,x_i}^{\kappa _i,q}P_{x_i,y}^{-\gamma _i,q}P_{y,x_j}^{\gamma _j,q}P_{x_j,x_1}^{-\kappa _j,q}=P_{x_1,x_1}^{\sigma _{ij}(y),q}\in \mathrm{U}(\bundle{E}_{x_1,q})
\]
where $\sigma _{ij}(y)=\kappa _i\circ (-\gamma _i)\circ \gamma _j\circ (-\kappa _j)$ is the loop based at $x_1$, through $y$ and constant at $q$ on $W$. If $\| R^\nabla \| _{TM,g}<\epsilon $, the above estimates and \cite[prop.~3.2]{Uhl1} yield a subcover $\{ V_i\} $ of $M$ with $V_i\subseteq U_i$ and smooth sections $\rho _i:V_i\times W\to \mathrm{U}(\bundle{E}_{x_1,W})$ such that $\mathrm{Id}=\rho _ig_{ij}\rho _j^{-1}$ holds on $(V_i\cap V_j)\times W$ for all $i,j$. We define $\Phi _i:=\rho _i\Psi _i:\bundle{E}_{|V_i\times W}\to V_i\times \bundle{E}_{x_1,W}$, then $\Phi _i\circ \Phi _j^{-1}=\mathrm{Id}$ holds on $(V_i\cap V_j)\times W$. Thus, $\Phi =\{ \Phi _i\} :\bundle{E}\to M\times \bundle{E}_{x_1,W}$ is a unitary bundle isomorphism. Let $\nabla ^{x_1}$ be the induced connection on $M\times \bundle{E}_{x_1,W}$, then the above arguments show
\[
\| \nabla -\Psi _i^*\nabla ^{x_1}\| _{TM,g}\leq C\cdot \| R^\nabla \| _{TM,g}
\]
for all $i$ on $U_i\times W$. Hence,
\[
\Psi _i^*(\nabla ^{x_1}+A_i)=\nabla =\Psi _j^*(\nabla ^{x_1}+A_j)
\]
yields $g_{ji}\nabla ^{x_1}g_{ij}+g_{ji}A_ig_{ij}=A_j$ as well as $\| \nabla ^{x_1}g_{ij}\| _{TM,g}\leq 2C\| R^\nabla \| _{TM,g}$. Following the above estimates proves the claim:
\[
\| \nabla -\Phi ^*\nabla ^{x_1}\| _{TM,g}\leq (C+C'')\| R^\nabla \| _{TM,g}.
\]
\section{Definition of relative K--area homology and properties}
The notion of K--area was introduced by Gromov in \cite{Gr01} and generalized by us in \cite{List10}. Let $(M^n,g)$ be a compact Riemannian manifold (possibly with boundary) and $X\subseteq M$ be a compact subset which has a \emph{geodesic normal neighborhood}, i.e.~we assume the existence of $\delta >0$, such that for all
\[
q\in B_\delta X=\{ p\in M|\ \mathrm{dist}(X,p)<\delta \} 
\]
there is a unique point $q_0\in X$ with $\mathrm{dist}(q,q_0)=\mathrm{dist}(q,X)$. Then the pair $(M,X)$ is a neighborhood deformation retract, hence $(M,X)$ has the homotopy extension property. We assume additionally that $(M,X)$ is a smooth neighborhood deformation retract which means the existence of a smooth map $F^M:M\to M$ with $F_{|X}^M=\mathrm{id}_X$, $F^M(\overline{B}_\delta X)=X$ for some $\delta >0$ and $F^M\simeq \mathrm{id}:(M,X)\to (M,X)$. We consider only pairs $(M,X)$ with these two properties. Note that the existence of a geodesic normal neighborhood and the existence of the smooth map $F^M$ is independent of the Riemannian metric. If $X$ is additionally a submanifold, we call $(M,X)$ a \emph{pair of compact manifolds}. Most compact submanifolds $N\subseteq M$ provide examples of pairs $(M,N) $ by the tubular neighborhood theorem. A continuous map $f:(M,X)\to (N,Y)$ is homotopic to a smooth map $(M,X)\to (N,Y)$ by the following argument: The smooth approximation theorem provides a smooth map $h:M\to N$ homotopic to $f$ and with $h( X)\subseteq \overline{B}_\delta Y$ for $\delta >0$, hence $F^N\circ h:M\to N$ is a smooth map with $F^N\circ h\simeq f:(M,X)\to (N,Y)$, here $F^N:N\to N$ means the smooth neighborhood deformation retract map of the pair $(N,Y)$.

Suppose that $\theta \in H_{2*}(M,X;G )$ is a singular homology class for a coefficient group $G$, then $\mathscr{V}(M,X;\theta )$ denotes the set of all Hermitian vector bundles $\bundle{E}\to M$  with Hermitian connection $\nabla $ such that
\begin{enumerate}
\item[(i)] $(\bundle{E},\nabla )$ is trivial in an open neighborhood of $X$, i.e.~for each path component $M_i$ of $M$ there is an open neighborhood $V_i\subseteq M_i$ of $X\cap M_i$ and a local unitary trivialization $\Psi :\bundle{E} _{|V_i}\to V_i\times \C ^{m_i}$ with $\nabla =\Psi ^*d$ for the canonical trivial connection $d$ on $V_i\times \C ^{m_i}$.
\item[(ii)] the classifying map $\rho ^\bundle{E}:M\to \coprod _m\mathrm{BU}_m $ of the vector bundle $\bundle{E}$ is homological nontrivial with respect to $\theta $, i.e.~$0\neq \rho ^\bundle{E}_*(\theta )$ where $\rho ^\bundle{E}_*:H _{2*}(M,X;G)\to H_{2*}(\coprod _m\mathrm{BU}_m ,\rho ^\bundle{E}(X);G)$, here $\rho ^\bundle{E}(X)\cap \mathrm{BU}_m$ consists of at most one point for all $m$ [this is possible by assumption (i) and the homotopy extension property].
\end{enumerate}
If $g$ is a Riemannian metric on $M$, the \emph{K--area of the homology class} $\theta $ is defined by
\[
\ke (M_g,X;\theta ):=\left( \inf _{(\bundle{E},\nabla )\in \mathscr{V}(M,X;\theta )}\| R^{(\bundle{E},\nabla )}\| _g\right) ^{-1} \in [0,\infty ]
\]
where $\| R^{(\bundle{E},\nabla )}\| _g$ is the usual $L^\infty $ comass operator norm on $\Lambda ^2M\otimes \mathrm{End}(\bundle{E})$. We define the K--area of odd homology classes $\theta \in H_{2*+1}(M,X;G)$ by
\[
\ke (M_g,X;\theta ):=\sup _{dt^2}\ke (M_g\times S^1_{dt^2},X\times S^1;\theta \times [S^1]),
\]
note that $\theta \times [S^1]\in H_{2*}(M\times S^1,X\times S^1;G)$ is well defined if we fix a generator $[S^1]\in H_1(S^1)$. The definitions of the K--area from above and in \cite{List10} coincide for $X=\emptyset $, in fact $\ke (M_g;\theta )=\ke (M_g,\emptyset ;\theta )$ for all $\theta \in H_k(M;G)$. If $M$ is closed and $[M]\in H_n(M)$ a fundamental class, $\ke (M_g;[M])$ is Gromov's K--area introduced in \cite{Gr01}. Since $\ke (M_g,X;\theta )<+\infty $ does not depend on the choice of the Riemannian metric $g$, the K--area homology
\[
\H _k(M,X;G)=\{ \theta \in  H_k(M,X;G)|\ \ke (M_g,N;\theta )<+\infty \}
\]
is independent of the choice of the Riemannian metric. In \cite{List10} we considered the case $X=\emptyset $ and proved plenty of interesting results concerning the K--area homology functor. Omitting the coefficient group means as usual $G=\Z $. The most interesting questions arise for $\Z $ respectively $\Q $ coefficients which provide the same amount of new information by fact (\ref{ite5}) below. However, other coefficient rings (like $\Z _2$) have useful applications as presented by \cite[prop.~6.5]{List10}. In the following we generalize and summarize some of the results from \cite{List10,habil}:
\begin{enumerate}
\item $\H _k(M,X;G)$ is a subgroup, submodule, linear subspace of $H_k(M,X;G)$  if $G$ is an abelian group, a ring respectively a field (cf.~\cite[sec.~2]{List10}).
\item $\H _0(M,X;G)=0$ for all pairs $(M,X)$, in fact the dimension axiom reads $\H _k(\{ pt\} ;G)=0$ for all $k$.
\item \label{ite3} If $f:(M,X)\to (N,Y)$ is continuous, the induced homomorphism of singular homology restricts to homomorphisms
\[
f_*:\H _k(M,X;G)\to \H _k(N,Y;G)
\]
which proves the functoriallity of $\H _*$. Moreover, singular theory implies $f_*=h_*$ if $f\simeq h:(M,X)\to (N,Y)$. 
\item If $f:(M,X)\to (N,Y)$ is a homotopy equivalence of pairs, then $f_*:\H _k(M,X;G)\to \H _k(N,Y;G)$ is an isomorphism for all $k$.
\item $\H _k(M\coprod N;G)=\H _k(M;G)\oplus \H _k(N;G)$.
\item \label{ite5} $\H _k(M,X)\otimes \Q =\H _k(M,X;\Q )$ and $\mathrm{Tor}(H_k(M,X))\subseteq \H _k(M,X)$ hold for all $k$ (cf.~\cite[prop.~2.6]{List10}). A universal coefficient theorem for arbitrary coefficients does not seem to exist for K--area homology. 
\item \label{ite6} If $f:(M,X)\to (N,Y)$ is a \emph{relative diffeomorphism} which means that $f$ is a continuous map of pairs and $f:M\setminus X\to N\setminus Y$ is a diffeomorphism, then
\[
f_*:\H _k(M,X;G)\to \H _k(N,Y;G)
\]
is an isomorphism for all $k$. 
\item If $U\subseteq M$ is an open set such that $M\setminus U$ is a compact manifold, $\overline{U}\subseteq \mathrm{int}(X)$ and $(M\setminus U,X\setminus U)$ is a pair, then the inclusion map is a relative diffeomorphism and provides an isomorphism
\[
\H _k(M\setminus U,X\setminus U;G)\to \H _k(M,X;G).
\] 
\item \label{ite8} $j_*:\H _k(M;G)\to \H _k(M,\{ p\} ;G)$ is an isomorphism for all $k$ and $p\in M$ by theorem \ref{sub_thm} below.
\item \label{ite10}If $X\subseteq M^n$ satisfies $(B_\delta X)\setminus X\approx (0,\delta )\times S^{n-1}$ for small $\delta >0$, then $M/X$ is a compact smooth manifold and $\H _k(M,X;G)\cong \H _k(M/X;G)$ holds for all $k$. Here we use facts (\ref{ite6}) and (\ref{ite8}).
\item Fact (\ref{ite10}) shows $\H _k(D^k,S^{k-1})=\H _k(S^k)=\Z $ if $k\geq 2$, hence the Hurewicz homomorphism satisfies $\pi _k(M,X)\to \H _k(M,X)$ for all $k\geq 2$.
\item \label{ite12} If $N$ is simply connected and closed, then $\H _k(N;G)=H_k(N;G)$ holds for all $k>0$ and coefficient groups $G$. If $N$ is closed with finite fundamental group, then $\H _k(N)=H_k(N)$ for all $k>0$. Remark \ref{rem32} below shows that this fails in general for arbitrary coefficients.
\item $\H _{2k-1}(M)=\{ \theta \in H_{2k-1}(M)|\ \theta \times [S^1]\in \H _{2k}(M\times S^1)\} $ by \cite[cor.~3.5]{List10}. This fails in general for pairs of manifolds, for instance, $\H _1(S^1,\{ x\} )=0$ but $\H _2(T^2, \{ x\} \times S^1)=\Z $. This results from the definition of the K--area for odd homology classes.
\item If $(N,g)$ is a closed Riemannian spin manifold of positive scalar curvature, then $\theta \times [N]\in \H _*(M\times N)$ for all $\theta \in H_*(M)$ (cf.~\cite[prop.~6.1]{List10}).
\item If $N^n$ is a closed manifold with $\H _2(N;\Z _2)=0$ and $\H _n(N)\neq H_n(N)$, then $N$ does not admit a metric of positive scalar curvature (cf.~\cite[prop.~6.5]{List10}).
\item \label{ite15} Let $(M^n,g)$ be closed,  orientable with residually finite fundamental group and sectional curvature $K\leq 0$, then $\H _n(M)=0$ (cf.~\cite[Example (v')]{Gr01}). Moreover, $\H _*(T^n)=0$.
\end{enumerate}
Note that some of the observations (for instance (13),(14)) do not generalize to arbitrary coefficients.
\begin{proof}
(\ref{ite3}): Since $f$ is homotopic to a smooth map $h:(M,X)\to (N,Y)$, the pull back of vector bundles proves that $f_*=h_*$ is well defined (cf.~prop.~2.3 in \cite{List10}).

(\ref{ite6}): $f_*$ is an isomorphism in singular theory. Let $g^M$, $g^N$ be Riemannian metrics on $M$ respectively $N$. We first notice that for all $\epsilon >0$ there is some $\delta >0$ such that $f^{-1}(\overline{B}_{\delta }Y)\subseteq \overline{B}_\epsilon X$. Here we use that $M$ and $N$ are compact, $f:M\to N$ is continuous and $f:M\setminus X\to N\setminus Y$ is a homeomorphism. Hence, we can choose $\delta >0$ such that $(N,Y)\hookrightarrow (N,\overline{B}_\delta Y)$ and $(M,X)\hookrightarrow (M,X')$ induce isomorphisms on their K--area homology groups where $X'=f^{-1}(\overline{B}_\delta Y)$. By construction there is a constant $C>0$ such that $\frac{1}{C}g^M\leq f^*g^N\leq C\cdot g^M$ holds on $\overline{M\setminus X'}$ and since $f^*:\mathscr{V}(N,\overline{B}_\delta Y;f_*\theta )\to \mathscr{V}(M,X';\theta )$ is a bijection, we conclude
\[
\frac{1}{C}\ke (M,X';\theta )\leq \ke (N,\overline{B}_{\delta }Y;f_*\theta )\leq C\cdot \ke (M,X';\theta )
\]
for all $\theta \in H_{2*}(M,X';G)$. The same arguments apply to products with $S^1$ which provides the same inequalities for $\theta \in H_{2*+1}(M,X';G)$. Thus, $f_*:\H _k(M,X';G)\to \H _k(N,\overline{B}_\delta Y;G)$ is an isomorphism.

(\ref{ite12}): Choose $\epsilon =\epsilon (N,g)>0$ from theorem \ref{thm_uhl}. If $(\bundle{E},\nabla )\to N$ (respectively $(\bundle{E},\nabla )\to N\times S^1$) is a bundle with curvature $\| R^\nabla \| <\epsilon $, then theorem \ref{thm_uhl} (respectively \ref{thm_uhl2}) shows that $\bundle{E}$ is trivial. Hence, the classifying map $\rho ^\bundle{E}$ is homotopic to the constant map which yields $\rho ^\bundle{E}_*(\theta )=0$ (respectively $\rho ^\bundle{E}_*(\theta \times [S^1])=0$) for all $\theta \in H_k(N;G)$ if $k>0$. This proves $\ke (N_g;\theta )\leq \frac{1}{\epsilon }$. If $N$ has finite fundamental group, consider the universal covering $p:\tilde N\to N$ and the epimorphism $p_*:H_k(\tilde N;\Q )\to H_k(N;\Q )$.
\end{proof}

\begin{thm}[\cite{habil}]
$\H _*(.;G)$ is a functor on the category pairs of compact smooth manifolds and continuous maps into the category of graded abelian groups which satisfies the Eilenberg--Steenrod axioms up to the existence of long exact homology sequences. Moreover, the suspension functor $\Sigma ^k$ stabilizes the functor of K--area homology into the functor of singular homology if $k\geq 2$.
\end{thm}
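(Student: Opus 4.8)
The statement mostly repackages the facts (\ref{ite3})--(\ref{ite8}) already in place, so the plan is to go down the Eilenberg--Steenrod list and point to where each axiom has been checked. That $\H _*(\,\cdot\,;G)$ takes values in graded abelian groups is clear from fact (1). Functoriality together with the homotopy axiom is fact (\ref{ite3}): a continuous map of pairs is homotopic to a smooth one, pull--back of Hermitian bundles shows that the singular homomorphism $f_*$ restricts to $\H _*\subseteq H_*$, and $f_*=h_*$ whenever $f\simeq h$ since this holds for singular homology. The dimension axiom is fact (2), and additivity over disjoint unions is fact (5). So the only axiom carrying geometric content is excision.

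For excision I would invoke fact (\ref{ite6}): a relative diffeomorphism $f:(M,X)\to(N,Y)$ induces an isomorphism $f_*:\H _k(M,X;G)\to\H _k(N,Y;G)$ for all $k$. This is already proved in the excerpt by the metric--comparison argument --- after shrinking $Y$ and correspondingly enlarging $X$, which changes no K--area homology group by facts (4) and (\ref{ite6}), one has $\tfrac1C g^M\le f^*g^N\le C\,g^M$ away from $X$, so the induced bijection of bundle sets distorts the curvature comass, hence the K--area, by at most the factor $C$, and finiteness is preserved in both directions; products with $S^1$ give the same for odd classes. The smooth excision inclusion of fact (\ref{ite8}) is a relative diffeomorphism of this type, so excision holds. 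The exactness axiom, on the other hand, is available only in the weakened form of Theorem \ref{main_thm}: the maps $i_*,j_*,\partial _*$ always exist with $\mathrm{Im}(i_*)=\ker(j_*)$, $\mathrm{Im}(j_*)=\ker(\partial _*)$ and $\mathrm{Im}(\partial _*)\subseteq\ker(i_*)$, but exactness at $\H _k(N)$ fails in general (section \ref{sect4}). This is exactly the content of ``up to the existence of long exact homology sequences''.

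For the stabilization claim, the plan is to realize $\Sigma ^k$ inside the category by $\Sigma ^k(M,N):=(M\times D^k,\,M\times S^{k-1}\cup N\times D^k)$, so that the relative K\"unneth formula yields $H_j(\Sigma ^k(M,N);G)\cong H_{j-k}(M,N;G)$ and, for $k\ge2$, the associated quotient space is simply connected. The key observation is that a Hermitian bundle $(\bundle{E},\nabla )\to M\times D^k$ which is trivial near the subspace is already globally trivial: contractibility of $D^k$ forces $\bundle{E}\cong\mathrm{pr}_M^*E_0$, and triviality over $M\times S^{k-1}$ together with a section $M\to M\times S^{k-1}$ then forces $E_0$, hence $\bundle{E}$, to be trivial; the same argument with an extra $S^1$ factor covers the odd--degree case. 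Therefore $\mathscr{V}(\Sigma ^k(M,N);\theta )=\emptyset$ for every $\theta \ne0$, so $\ke=0$ and $\H _*(\Sigma ^k(M,N);G)=H_*(\Sigma ^k(M,N);G)$; via the suspension isomorphism in singular homology this says $\Sigma ^k$ stabilizes the functor of K--area homology into the functor of singular homology. Conceptually this is the relative counterpart of fact (\ref{ite12}): once the base is simply connected, Theorem \ref{thm_uhl} (with Theorem \ref{thm_uhl2} for the $S^1$--products) makes every small--curvature bundle trivial --- and it is here that the rank--independence of $\epsilon $ and $C$ is essential, since infinite K--area forces unbounded rank. The step I expect to be the main obstacle is the first one: fixing a model of the suspension functor that genuinely lives in the category of pairs of compact smooth manifolds --- corners must be rounded, and $M\times S^{k-1}\cup N\times D^k$ is not a submanifold --- while still yielding the K\"unneth identification and the reduction used above.
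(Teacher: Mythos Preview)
Your treatment of the Eilenberg--Steenrod axioms is correct and matches the paper. One slip in the digression on exactness: you write that $i_*,j_*,\partial_*$ ``always exist'' with the stated inclusions, but this is false --- Example~(b) in section~\ref{sect4} shows that $\partial_*$ need not land in $\H_{k-1}(N)$ at all when $\pi_1(N)$ is infinite. Theorem~\ref{main_thm} carries a hypothesis. This does not affect the present theorem, which explicitly excludes exactness.

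For the suspension claim your route diverges from the paper's. You model $\Sigma^k$ by $(M\times D^k,\,M\times S^{k-1}\cup N\times D^k)$ and argue, purely topologically, that every relative bundle is globally trivial, so $\mathscr{V}=\emptyset$ and $\H_*=H_*$; no curvature estimate enters (your closing remark about Theorem~\ref{thm_uhl} is therefore beside the point --- your own argument uses neither small curvature nor simple connectivity, and in fact goes through already for $k\ge 1$). The paper instead works with the product pair $(M\times S^k,X\times S^k)$, which stays inside the category of pairs of compact manifolds, and proves via Theorem~\ref{thm_uhl2} that $\theta\times[S^k]$ has finite K--area for every $\theta$ (the proposition immediately following the theorem); one then passes to $M/X$ or invokes \cite{habil}. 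The trade--off is precisely the obstacle you flag at the end: your subspace $M\times S^{k-1}\cup N\times D^k$ has no geodesic normal neighborhood near the corner $N\times S^{k-1}$ (the nearest--point projection is not single--valued there), so your pair is not admissible in the paper's framework. The paper pays with the analytic input of Theorem~\ref{thm_uhl2} to keep its objects legitimate; you gain elementarity but must leave the category and then re--enter it, and that re--entry is the genuine work your sketch defers.
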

Our main theorem \ref{main_thm} provides the existence of very short exact homology sequences for pairs $(M,N)$ if all components of $N$ have a finite fundamental group. We refer to \cite[theorem 6.4]{List10} for the claim that the suspension functor $\Sigma ^k$ stabilizes K--area homology into singular homology if $k\geq 2$. The relative version to \cite[theorem 6.4]{List10} follows considering the quotient space $M/X$ instead of the pair $(M,X)$. An alternative approach is presented in \cite[sec.~1.6]{habil}.  The main ingredient in these proofs is the observation in the following proposition, in fact $\eta \times [S^k]\in \H _{j+k}(M\times S^k,X\times S^k;G)$ for all $\eta \in H_j(M,X;G)$ if $k\geq 2$.

\begin{prop}
\begin{enumerate}
\item[(i)] Let $\F $ be a field or $\F =\Z $. If $N$ is simply connected, closed and $\eta \in H_k(N;\F )$ for $k>0$, then $\theta \times \eta \in \H _{j+k}(M\times N,X\times N;\F )$ holds for all $\theta \in H _j(M,X;\F )$.
\item[(ii)] Suppose that $k\geq 2$. Fix a point $e_k\in S^k$, a fundamental class $[S^k]\in H_k(S^k)$ and let $i:M\to M\times S^k$ be the inclusion map $i(p)=(p,e_k)$, then  
\[
\begin{split}
\H _{j+k}(M,X;G)\oplus H_j(M,X;G)&\to \H _{j+k}(M\times S^k,X\times S^k;G),\\
(\eta ,\theta )&\mapsto i_*\eta +\theta \times [S^k]
\end{split}
\]
is a natural isomorphism for all $j$.
\item[(iii)] If $N$ is connected and closed with finite fundamental group and $M$ is compact, then the homology cross product provides an isomorphism
\[
\H _k(M\times N;\Q )=\H _k(M;\Q )\oplus \bigoplus _{i=2}^kH _{k-i}(M;\Q )\otimes H_i(N;\Q ).
\]
\end{enumerate}
\end{prop}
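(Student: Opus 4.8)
The plan is to reduce parts (ii) and (iii) to part (i), and to prove part (i) by means of Theorem \ref{thm_uhl2}. For part (i), assume first that $j+k$ is even. Equip $M\times N$ with a product metric $g_M\oplus g_N$ and let $\epsilon=\epsilon(N,g_N)$ be the Uhlenbeck constant of Theorem \ref{thm_uhl} for $(N,g_N)$. If $(\bundle{E},\nabla)\to M\times N$ is Hermitian, trivial near $X\times N$, and has $\|R^\nabla\|_{g_M\oplus g_N}<\epsilon$, then the $TN$--comass $\|R^\nabla\|_{TN,g_N}$ of the curvature is $<\epsilon$ as well, so Theorem \ref{thm_uhl2} — with $N$ in the role of the simply connected base and $M$ in the role of the parameter manifold — yields a unitary isomorphism $\bundle{E}\cong\mathrm{pr}_M^*F$ covering the identity, where $F=\bundle{E}|_{M\times\{x\}}$ is a bundle over $M$; note that $F$ is trivial near $X$ since $\bundle{E}$ is trivial near $X\times N\supseteq X\times\{x\}$. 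Hence the classifying map $\rho^\bundle{E}$ is homotopic, through maps of pairs $(M\times N,X\times N)\to(\coprod_m\mathrm{BU}_m,*)$ — using the homotopy extension property just as in condition (ii) of the definition of $\mathscr{V}$ — to $\rho^F\circ\mathrm{pr}_M$. Since $\mathrm{pr}_M$ collapses $N$ and $\eta$ has positive degree, $\mathrm{pr}_{M*}(\theta\times\eta)=0$, hence $\rho^\bundle{E}_*(\theta\times\eta)=0$ and $(\bundle{E},\nabla)\notin\mathscr{V}(M\times N,X\times N;\theta\times\eta)$. Therefore $\ke((M\times N)_{g_M\oplus g_N},X\times N;\theta\times\eta)\le 1/\epsilon<\infty$, and since finiteness of the K--area is metric--independent, $\theta\times\eta\in\H_{j+k}(M\times N,X\times N;\F)$.

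When $j+k$ is odd I would reduce to the even case using the definition of K--area for odd classes: after the obvious reordering of factors and up to sign, $(\theta\times\eta)\times[S^1]$ becomes $(\theta\times[S^1])\times\eta$, which is even--dimensional and to which the preceding paragraph applies with $(M\times S^1,X\times S^1,\theta\times[S^1])$ in place of $(M,X,\theta)$; as the bound $1/\epsilon(N,g_N)$ obtained there does not involve the metric on the parameter factor, it is in particular independent of $dt^2$, so the supremum over $dt^2$ stays finite. This proves (i); I also note that the argument is valid for an arbitrary coefficient group $G$ whenever $N=S^k$ and $\eta=[S^k]$ (the pairing $H_*(M,X;G)\times H_*(S^k;\Z)\to H_*(M\times S^k,X\times S^k;G)$ being defined for every $G$), which I use in (ii). For (ii): $H_*(S^k)$ is free, so the Künneth theorem gives a natural isomorphism $H_{j+k}(M\times S^k,X\times S^k;G)\cong H_{j+k}(M,X;G)\oplus H_j(M,X;G)$, $(\alpha,\beta)\mapsto i_*\alpha+\beta\times[S^k]$. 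I claim it restricts to the asserted isomorphism. Its image on $\H_{j+k}(M,X;G)\oplus H_j(M,X;G)$ lies in $\H$: $i_*$ preserves K--area homology by fact (\ref{ite3}), and $\beta\times[S^k]\in\H_{j+k}(M\times S^k,X\times S^k;G)$ by the remark just made (here $k\ge2$, so $S^k$ is simply connected and closed). Conversely, for $\zeta\in\H_{j+k}(M\times S^k,X\times S^k;G)$ write $\zeta=i_*\eta+\theta\times[S^k]$; the retraction $r=\mathrm{pr}_M:(M\times S^k,X\times S^k)\to(M,X)$ satisfies $r\circ i=\mathrm{id}$ and $r_*(\theta\times[S^k])=0$, so $\eta=r_*\zeta\in\H_{j+k}(M,X;G)$ by functoriality, whereas $\theta$ is unconstrained. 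The resulting bijection is additive, and being built from the natural maps $i_*$, $\times[S^k]$ and $r_*$ it is natural.

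For part (iii) I would combine the same scheme, over $\Q$, with the transfer argument already used for fact (\ref{ite12}). The Künneth theorem gives $H_k(M\times N;\Q)=\bigoplus_{i=0}^kH_{k-i}(M;\Q)\otimes H_i(N;\Q)$ via the cross product; the $i=0$ summand equals $i_*H_k(M;\Q)$ for the basepoint inclusion $i:M\hookrightarrow M\times N$, and the $i=1$ summand vanishes because a finite $\pi_1(N)$ forces $H_1(N;\Q)=0$. For $i\ge2$ every class $\beta\times\gamma$ with $\beta\in H_{k-i}(M;\Q)$ and $\gamma\in H_i(N;\Q)$ lies in $\H_k(M\times N;\Q)$: writing $p:\tilde N\to N$ for the universal cover, which is a finite (hence closed, simply connected) covering, the transfer makes $p_*:H_i(\tilde N;\Q)\to H_i(N;\Q)$ onto, so $\gamma=p_*\tilde\gamma$ and $\beta\times\gamma=(\mathrm{id}_M\times p)_*(\beta\times\tilde\gamma)$, where $\beta\times\tilde\gamma\in\H_k(M\times\tilde N;\Q)$ by part (i) and $(\mathrm{id}_M\times p)_*$ preserves K--area homology. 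Conversely, decomposing $\zeta\in\H_k(M\times N;\Q)$ as $i_*\eta+\mu$ with $\mu$ in the span of the summands $i\ge2$, one has $\mu\in\H$, hence $i_*\eta\in\H_k(M\times N;\Q)$, and $\eta=r_*(i_*\eta)\in\H_k(M;\Q)$ by functoriality of $r_*=\mathrm{pr}_{M*}$. This gives $\H_k(M\times N;\Q)=i_*\H_k(M;\Q)\oplus\bigoplus_{i=2}^kH_{k-i}(M;\Q)\otimes H_i(N;\Q)$, and $i_*$ identifies the first summand with $\H_k(M;\Q)$ because $r_*$ is a left inverse.

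I expect the only real content to be in part (i); parts (ii) and (iii) are formal consequences of Künneth, the functoriality of $\H_*$, and the retraction $\mathrm{pr}_M$. Inside (i) the delicate points are of a bookkeeping nature: one must pass to a product metric so that smallness of $\|R^\nabla\|_g$ implies smallness of $\|R^\nabla\|_{TN,g_N}$; one must check that $F=\bundle{E}|_{M\times\{x\}}$ remains trivial near $X$, so that the homotopy $\rho^\bundle{E}\simeq\rho^F\circ\mathrm{pr}_M$ can be realized through maps of pairs; and one must exploit that the constant $\epsilon$ in Theorem \ref{thm_uhl2} depends only on $(N,g_N)$, which is what makes the odd--degree reduction — and compatibility with the metric independence built into $\H_*$ — work.
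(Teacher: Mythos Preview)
Your proof is correct and follows essentially the same approach as the paper's: part (i) by Theorem \ref{thm_uhl2} (bundles of small curvature pull back from the parameter factor, so the classifying map factors through $\mathrm{pr}_M$ and kills $\theta\times\eta$), and parts (ii), (iii) as formal consequences via K\"unneth, functoriality, the retraction $\mathrm{pr}_M$, and the universal-cover transfer. The only organizational differences are that the paper handles the odd case of (i) by running the same argument directly on $M\times N\times S^1$ (with $W=M\times S^1$ in Theorem \ref{thm_uhl2}) rather than reordering to reduce to the even case, and that for the $i=0$ summand in (iii) the paper cites \cite[prop.~2.3 and prop.~5.1]{List10} where you argue directly with $\mathrm{pr}_M$; neither difference is substantive.
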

\begin{proof}
(i) If $\theta \times \eta $ has infinite K--area, there is a sequence of bundles $\bundle{E}_i\to M\times N$ (respectively  $\bundle{E}_i\to M\times N\times S^1$) with $\rho ^{\bundle{E}_i}_*(\theta \times \eta )\neq 0$ (respectively $\rho ^{\bundle{E}_i}_*(\theta \times \eta \times [S^1])\neq 0$) and with curvature $\| R^{\bundle{E}_i}\| <\frac{1}{i}$. Theorem \ref{thm_uhl2} implies the existence of $i_0$ such that for all $i\geq i_0$ there is a bundle $\bundle{F}_i$ on $M$ (respectively on $M\times S^1$) with $\bundle{E}_i\cong N\times \bundle{F}_i$. Thus, we can choose the classifying map $\rho ^{\bundle{E}_i}$ to be constant on $N$, i.e.~$\rho ^{\bundle{E}_i}_*(\theta \times \eta )=0$ (respectively $\rho ^{\bundle{E}_i}_*(\theta \times \eta \times [S^1])= 0$) for all $i\geq i_0$ which yields a contraction. Hence $\theta \times \eta $ has finite K--area. 
 
(ii) The map $(\eta ,\theta )\mapsto i_*\eta +\theta \times [S^k]$ is an isomorphism for singular homology. Using the projection map $M\times S^k\to M$ and the functoriallity of $\H _*$, then $i_*\eta $ has finite K--area if and only if $\eta \in H _{j+k}(M,X;G)$ has finite K--area. Moreover, $\theta \in [S^k]$ has finite K--area for all $\theta \in H_j(M,X;G)$ by the same arguments as in the proof of (i): If $\bundle{E}\to M\times S^k$ has sufficiently small curvature, it is induced from a bundle on $M$ by theorem \ref{thm_uhl2} which means that $\rho ^{\bundle{E}}_*(\theta \times [S^k])=0$, a contradiction to the assumption that $\theta \times [S^k]$ has infinite K--area.

(iii) Use the K\"unneth theorem for rational singular homology. If $\eta \in H_0(N;\Q )$, then $\theta \times \eta \in \H _k(M\times N;\Q )$ if and only if $\theta \in \H _k(M;\Q )$ by \cite[prop.~2.3 and prop.~5.1]{List10}. Given $\eta \in H_j(N;\Q )$ for some $j>1$, then $\eta =f_*(\tilde \eta )$ for some $\tilde \eta \in H_j(\tilde N;\Q )$ where $f:\tilde N\to N$ is the universal covering. Since $\tilde \eta \times \theta \in \H _{k}(\tilde N\times M;\Q )$ for all $\theta \in H_{k-j}(M;\Q )$ by the results in (i), we conclude $\eta \times \theta =(f\times \mathrm{id})_*(\tilde \eta \times \theta )\in \H _k(N\times M;\Q )$.
\end{proof}
\begin{rem}
\label{rem32}
$\H _{2*}(\R P^n;\Z _2)=0$ for all $n$.
\end{rem}
\begin{proof}
Take the complex line bundle $\bundle{L}\to \R P^n$ with Chern class $0\neq c_1(\bundle{L})\in H^2(\R P^n;\Z )=\Z _2$, then $0\neq c_1(\bundle{L})\in H^2(\R P^n;\Z _2)$ and $\bundle{L}$ admits a flat connection by $H^2(\R P^n;\R )=0$. Hence, the bundle $\bundle{E}=\bundle{L}^{\oplus k}$ satisfies $c_k(\bundle{E})=c_1(\bundle{L})^k=\alpha ^{2k}$ which implies $\left< c_k(\bundle{E}),\beta \right> \neq 0$ for $0\neq \beta \in H_{2k}(\R P^n;\Z _2)$. Since $\bundle{E}$ is flat, this yields $\H _{2k}(\R P^n;\Z _2)=0$ for all $k$. 
\end{proof}
A complex vector bundle $\bundle{E}\to M$ is \emph{stably rational trivial} if $\bundle{E}^{\oplus m}\oplus  \C ^l$ is a trivial bundle for some integers $m,l$. In fact, $\bundle{E}$ is stably rational trivial if and only if $[\bundle{E}]=[\C ^j]$ holds in $ K(M)\otimes \Q $ where $j=\mathrm{rk}(\bundle{E})$. We observed that $\H _{k}(M)=H_{k}(M)$ holds for all $k>0$ if $M$ is a closed manifold with finite fundamental group. However, there are plenty of closed manifolds $M$ with infinite fundamental group and $\H _k(M)=H_k(M)$ for all $k>0$ (for instance $M=\R P^n\# \R P^n$, $n\geq 3$). The following proposition shows the interest in manifolds whose K--area homology equals the reduced singular homology. 
\begin{prop}
Let $(M,g)$ be a compact Riemannian manifold, then the following is equivalent:
\begin{enumerate}
\item[(i)] There is an $\epsilon >0$ with the following property: Any Hermitian vector bundle $(\bundle{E},\nabla )\to M$ with curvature $\| R^{(\bundle{E},\nabla )}\| _g<\epsilon $ is stably rational trivial. 
\item[(ii)] $\H _{2k}(M)=H_{2k}(M)$ for all $k>0$.
\end{enumerate}
\end{prop}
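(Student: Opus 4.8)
The plan is to prove the two implications separately, in each case reducing condition~(i) to the vanishing of the rational Chern classes $c_i(\bundle{E})\in H^{2i}(M;\Q)$, $i\geq 1$, of bundles of small curvature. The first step is to record two elementary translations. By the characterization of stable rational triviality recalled above, $(\bundle{E},\nabla)$ is stably rational trivial if and only if $[\bundle{E}]=[\C^{\mathrm{rk}(\bundle{E})}]$ in $K(M)\otimes\Q$; since $M$ is compact the Chern character $\mathrm{ch}\colon K(M)\otimes\Q\xrightarrow{\ \cong\ }\bigoplus_i H^{2i}(M;\Q)$ is an isomorphism, so this is equivalent to $c_i(\bundle{E})=0$ in $H^{2i}(M;\Q)$ for every $i\geq 1$ (the $c_i$ and the components of $\mathrm{ch}$ are polynomials in one another without constant term). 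Moreover, a stably rational trivial bundle has $\rho^\bundle{E}_*=0$ on $H_{2k}(M;\Z)$ for all $k\geq 1$: the pullback $\rho^{\bundle{E}*}$ kills every universal Chern class, hence vanishes on $H^{2k}(\coprod_m\mathrm{BU}_m;\Q)$, hence $\rho^\bundle{E}_*=0$ on $H_{2k}(M;\Q)$, and the integral statement follows since $H_{2k}(\mathrm{BU}_m;\Z)$ is free and therefore embeds into its rationalization.

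For (i)$\Rightarrow$(ii) I would fix the $\epsilon$ of~(i) and prove $H_{2k}(M)\subseteq\H_{2k}(M)$ for $k\geq 1$, the opposite inclusion being already known. Given $\theta\in H_{2k}(M;\Z)$, every $(\bundle{E},\nabla)\in\mathscr{V}(M;\theta)$ has $\rho^\bundle{E}_*(\theta)\neq 0$, so by the second observation $\bundle{E}$ is not stably rational trivial, hence $\|R^{(\bundle{E},\nabla)}\|_g\geq\epsilon$ by~(i). Passing to the infimum over $\mathscr{V}(M;\theta)$ gives $\ke(M_g;\theta)\leq\epsilon^{-1}<\infty$, i.e.\ $\theta\in\H_{2k}(M)$.

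For (ii)$\Rightarrow$(i) the idea is to manufacture a uniform $\epsilon$ out of the finitely many individual K--area bounds attached to a basis of rational homology. Using fact~(\ref{ite5}), condition~(ii) gives $\H_{2k}(M;\Q)=H_{2k}(M;\Q)$ for all $k\geq 1$, and compactness of $M$ makes all but finitely many of these groups trivial. I would pick a finite basis $T$ of $\bigoplus_{k\geq 1}H_{2k}(M;\Q)$ (if $T=\emptyset$ then $H^{2i}(M;\Q)=0$ for all $i\geq 1$, so every bundle is stably rational trivial and~(i) is trivial); each $\theta\in T$ has finite K--area, so $\delta_\theta:=\inf_{(\bundle{E},\nabla)\in\mathscr{V}(M;\theta)}\|R^{(\bundle{E},\nabla)}\|_g>0$, and I set $\epsilon:=\min_{\theta\in T}\delta_\theta>0$. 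If some $(\bundle{E},\nabla)$ with $\|R^{(\bundle{E},\nabla)}\|_g<\epsilon$ were not stably rational trivial, then $c_j(\bundle{E})\neq 0$ in $H^{2j}(M;\Q)=\mathrm{Hom}(H_{2j}(M;\Q),\Q)$ for some $j\geq 1$, hence $\langle c_j(\bundle{E}),\theta\rangle\neq 0$ for some $\theta\in T$; since $c_j(\bundle{E})=\rho^{\bundle{E}*}(c_j^{\mathrm{univ}})$ this forces $\rho^\bundle{E}_*(\theta)\neq 0$, i.e.\ $(\bundle{E},\nabla)\in\mathscr{V}(M;\theta)$, contradicting $\|R^{(\bundle{E},\nabla)}\|_g<\epsilon\leq\delta_\theta$. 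So every bundle with curvature below $\epsilon$ is stably rational trivial.

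The routine ingredients here are the two translations of the first paragraph (the Chern character isomorphism over a compact base and the freeness of $H_*(\mathrm{BU}_m;\Z)$); the one step that genuinely needs care is the production of a single $\epsilon$ in (ii)$\Rightarrow$(i). The naive route, namely detecting one \emph{bad} homology class by a sequence of bundles with curvature tending to $0$, stumbles on the fact that a finite dimensional $\Q$--vector space may be a countable union of proper subspaces, so one is forced to extract uniformity from a finite basis as above. Everything else is bookkeeping with Chern classes and the definition of $\ke$.
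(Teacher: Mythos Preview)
Your proof is correct and follows essentially the same route as the paper: both directions hinge on the equivalence ``stably rational trivial $\Leftrightarrow$ all rational Chern classes vanish'', and (ii)$\Rightarrow$(i) is obtained in both cases by choosing a finite rational basis and taking $\epsilon$ to be the minimum of the corresponding curvature infima. The only cosmetic difference is that for (i)$\Rightarrow$(ii) the paper passes through the Chern--character K--area via \cite[prop.~3.1]{List10}, while you argue directly over $\Z$ using the freeness of $H_{2k}(\mathrm{BU}_m;\Z)$; both arguments are valid.
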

\begin{proof}
(i)$\Rightarrow $(ii): $\bundle{E}$ is stably rational trivial if and only if $\mathrm{ch}(\bundle{E})=\mathrm{rk}(\bundle{E})$, hence $\ke (M_g;\theta )\leq \frac{k^2}{\epsilon }$ for all $\theta \in H_{2k}(M;\Q )$ by \cite[prop.~3.1]{List10}.\\
(ii)$\Rightarrow $(i): Let $\theta _1,\ldots ,\theta _s$ be a basis of $\H _{2*}(M;\Q )$ with $\ke (M_g;\theta _i)\leq \ke (M_g;\theta _s)$ for all $i$, then \cite[prop.~2.2]{List10} and the assumption in (ii) imply $\ke (M_g;\theta )\leq \ke (M_g;\theta _s)<\infty $ for all $\theta \in H_{2k}(M;\Q )$ if $k>0$. Set $\epsilon =1/\ke (M_g;\theta _s)$, then $\| R^{(\bundle{E},\nabla )}\| _g<\epsilon $ yields $\left< c_k(\bundle{E}),\theta \right> =0$ for all $\theta \in H_{2k}(M;\Q )$ if $k>0$.
\end{proof}

\section{Proof of the main theorem}
\label{sect4}
Let $(M,g)$ be a compact Riemannian manifold and $N\subseteq M$ be a closed submanifold with finite fundamental group on each path component. We choose $\delta =\delta (M,g)>0$ such that $B_sN=\{ p\in M\ |\ \mathrm{dist}(p,N)<s\} $ is a tubular (or collar) neighborhood of $N$ for $s=4\delta $. Let $f:M\to M$ be a smooth map which is homotopic to the identity $\mathrm{id}:M\to M$ and satisfies $f(B_{3\delta }N)=N$ as well as $f_{|N}=\mathrm{id}_N$. Moreover, let $h:B_{3\delta }N\to [0,1]$ be a smooth map with $h(p)=1$ if $\mathrm{dist}(p,N)\in [2\delta ,3\delta )$ and $h(p)=0$ for $p\in B_{\delta }N$.  We use the constants $C=C(\tilde N,\tilde g)<\infty $ and $\epsilon =\epsilon (\tilde N,\tilde g)>0$ from theorem \ref{thm_uhl} for the universal covering $(\tilde N,\tilde g)$ of the Riemannian manifold $(N,g_{|N})$, but without loss of generality choose $\epsilon \leq \frac{1}{2C^2}$. Moreover, we define the constant
\[
\mathfrak{c}:=\left( C\cdot \| dh\| _g+1\right) \cdot \| df\| _g^2\in (1,\infty ).
\]
If the normal bundle to $N\subseteq M$ is trivial, then for a given Riemannian metric $g'$ on $N$ and all $s >0$ there is a metric $g$ on $M$ such that $g_{|N}=g'$ and $\| df\| _g=1$, $\| dh\| _g\leq s/C$, in fact $\mathfrak{c} \in (1,1+s] $. 
\begin{lem} \label{lem41}Suppose that $\pi _1(N,x)=0$ for all $x\in N$.\\
a) Let $(\bundle{E},\nabla )\to M$ be a Hermitian vector bundle with curvature $\| R^\nabla \| _g<\frac{\epsilon }{\| df\| _g^2}$, then there is a Hermitian connection $\widetilde{\nabla }$ on $\bundle{E}$ such that
\begin{enumerate}
\item $\| R^{\widetilde{\nabla }}\| _g\leq \mathfrak{c} \cdot \| R^\nabla \| _g$.
\item $(\bundle{E},\widetilde{\nabla })$ is trivial in a neighborhood of $N$, in fact there is a trivialization $\Psi :\bundle{E}_{|B_{\delta }N}\to B_{\delta }N \times \C ^m$ with $\widetilde{\nabla }=\Psi ^*d$ on $B_{\delta }N$.
\end{enumerate}
b) If $(\bundle{E},\nabla )\to M\times S^1$ has curvature $\| R^\nabla \| _{TM,g}<\frac{\epsilon }{\| df\| _g^2}$, then there is a Hermitian connection $\widetilde{\nabla }$ on $\bundle{E}$ such that
\begin{enumerate}
\item $\| R^{\widetilde{\nabla }}\| _{TM,g}\leq \mathfrak{c} \cdot \| R^\nabla \| _{TM,g}$.
\item $(\bundle{E},\widetilde{\nabla })$ is trivial in a neighborhood of $N\times S^1$, i.e.~there is a trivialization $\Psi :\bundle{E}_{B_\delta N\times S^1}\to B_\delta N\times S^1\times \C ^m$ with $\widetilde{\nabla }=\Psi ^*d$ on $B_\delta N\times S^1$.
\end{enumerate}
\end{lem}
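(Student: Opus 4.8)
The plan is to pull back the connection $\nabla$ via the map $f$, which already makes $(\bundle{E},f^*\nabla)$ flat on $B_{3\delta}N$, and then to interpolate between $f^*\nabla$ and an honestly trivial connection near $N$ using the cutoff function $h$ and the Uhlenbeck trivialization provided by theorem \ref{thm_uhl}. First I would observe that since $f\simeq \mathrm{id}$, the bundles $\bundle{E}$ and $f^*\bundle{E}$ are isomorphic, so we may work with $f^*\nabla$ on $\bundle{E}$; because $f(B_{3\delta}N)=N$, the curvature $R^{f^*\nabla}=f^*R^\nabla$ vanishes on $B_{3\delta}N$ and satisfies $\|R^{f^*\nabla}\|_g\le \|df\|_g^2\cdot\|R^\nabla\|_g<\epsilon$ everywhere on $M$. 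Restricting to the tubular neighborhood and using that $N$ is simply connected (so each component of the tube deformation retracts to a simply connected base), theorem \ref{thm_uhl} applied on $\tilde N$ — or rather on the tube, which is homotopy equivalent to $N$ and whose components are simply connected — yields a smooth unitary trivialization $\Psi$ of $\bundle{E}$ over $B_{3\delta}N$ with $\|f^*\nabla-\Psi^*d\|_g\le C\cdot\|R^{f^*\nabla}\|_g$. Here I should be slightly careful: theorem \ref{thm_uhl} is stated for closed manifolds, so strictly one applies it on $\tilde N$ with its constants $C,\epsilon$ and transports the trivialization along the normal/collar directions where the pulled-back connection is flat; this is exactly why the constants $C(\tilde N,\tilde g)$, $\epsilon(\tilde N,\tilde g)$ were fixed in advance.

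Next I would define $\widetilde\nabla$ by the interpolation $\widetilde\nabla = f^*\nabla - h\cdot(f^*\nabla-\Psi^*d)$ on $B_{3\delta}N$, extended by $f^*\nabla$ outside. On $B_\delta N$ we have $h\equiv 0$... wait, we need $\widetilde\nabla=\Psi^*d$ there, so rather set $\widetilde\nabla = f^*\nabla - (1-h)\cdot(f^*\nabla-\Psi^*d)$ where $1-h$ equals $1$ on $B_\delta N$ and $0$ for $\mathrm{dist}\in[2\delta,3\delta)$; then $\widetilde\nabla$ agrees with $\Psi^*d$ on $B_\delta N$ (proving (2)) and with $f^*\nabla$ near $\partial B_{3\delta}N$, so it glues to a global connection on $\bundle{E}$. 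The curvature estimate (1) is the computational heart: writing $A=f^*\nabla-\Psi^*d\in\Omega^1\otimes\frak{u}(\bundle{E})$, one has $\widetilde\nabla=\Psi^*d+h\cdot A$ on the collar, and its curvature is $R^{\widetilde\nabla}=dh\wedge A + h\,dA + h^2 A\wedge A$; since $R^{f^*\nabla}=dA+A\wedge A=0$ on the tube, $dA=-A\wedge A$, giving $R^{\widetilde\nabla}=dh\wedge A+(h^2-h)A\wedge A$. Now $\|A\|_g\le C\|R^{f^*\nabla}\|_g$ and $\|A\wedge A\|_g$ is controlled by $\|A\|_g^2\le C^2\|R^{f^*\nabla}\|_g^2\le C^2\epsilon\cdot\|R^{f^*\nabla}\|_g\le\frac12\|R^{f^*\nabla}\|_g$ using $\epsilon\le\frac1{2C^2}$, while $|h^2-h|\le 1$; combining, $\|R^{\widetilde\nabla}\|_g\le(C\|dh\|_g+1)\|R^{f^*\nabla}\|_g\le(C\|dh\|_g+1)\|df\|_g^2\|R^\nabla\|_g=\mathfrak{c}\cdot\|R^\nabla\|_g$, which is exactly (1).

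Part b) is formally identical, replacing theorem \ref{thm_uhl} by theorem \ref{thm_uhl2} applied with $W=S^1$: the map $f\times\mathrm{id}_{S^1}$ pulls $\nabla$ back to a connection on $\bundle{E}\to M\times S^1$ that is flat in the $TM$-directions over $B_{3\delta}N\times S^1$, theorem \ref{thm_uhl2} supplies a bundle isomorphism $\Psi$ over the tube-times-$S^1$ with $\|f^*\nabla-\Psi^*\nabla^x\|_{TM,g}\le C\|R^{f^*\nabla}\|_{TM,g}$, and the same convex interpolation via $1-h$ gives $\widetilde\nabla$; the curvature bound in the $\|\cdot\|_{TM,g}$ seminorm goes through verbatim since that seminorm only tests vectors tangent to $M$, along which everything is flat on the collar. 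The main obstacle I anticipate is the careful bookkeeping in the first paragraph: theorem \ref{thm_uhl} is literally about closed manifolds, whereas here we need a trivialization of $(\bundle{E},f^*\nabla)$ over an open tubular neighborhood $B_{3\delta}N$; the resolution is that over $B_{3\delta}N$ the connection $f^*\nabla$ is pulled back from $N$ under the retraction, so it is genuinely the pullback of a bundle-with-connection on the closed manifold $N$ (or its universal cover), to which theorem \ref{thm_uhl} applies directly with the precommitted constants, and the trivialization is then pulled back to the tube — this is why no independent "manifold with boundary" version of the Uhlenbeck estimate is needed.
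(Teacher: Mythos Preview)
Your overall strategy matches the paper's exactly: pull back via $f$, apply theorem~\ref{thm_uhl} on $N$ to get a good trivialization over the tube, interpolate via $h$, and transport back along the bundle isomorphism $\bundle{E}\cong f^*\bundle{E}$. However, there is one genuine slip. You assert that $R^{f^*\nabla}=f^*R^\nabla$ \emph{vanishes} on $B_{3\delta}N$; this is false. The map $f$ collapses the normal directions, so $f^*R^\nabla(v,w)=0$ whenever $v$ or $w$ is normal, but for $v,w$ tangent to $N$ at a point of $N$ one has $df(v)=v$, $df(w)=w$ (since $f_{|N}=\mathrm{id}_N$) and hence $f^*R^\nabla(v,w)=R^\nabla(v,w)$, which need not be zero. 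What \emph{is} true---and what the paper uses---is that $(f^*\bundle{E},f^*\nabla)_{|B_{3\delta}N}$ is pulled back from $(\bundle{E}_{|N},\nabla _{|N})\to N$, and the latter has curvature bounded by $\|df\|_g^2\,\|R^\nabla\|_g<\epsilon$; this is what licenses the appeal to theorem~\ref{thm_uhl}.

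Consequently your curvature formula on the tube is missing a term: with $\widetilde\nabla=\Psi^*d+hA$ and $dA=R^{f^*\nabla}-A\wedge A$ one gets
\[
R^{\widetilde\nabla}=dh\wedge A+h\,R^{f^*\nabla}+(h^2-h)\,A\wedge A,
\]
not just $dh\wedge A+(h^2-h)A\wedge A$. The repair is immediate and is exactly how the paper proceeds: bound $\|A\wedge A\|_g\le 2\|A\|_g^2\le 2C^2\|R^{f^*\nabla}\|_g^2\le \|R^{f^*\nabla}\|_g$ (using $\epsilon\le\frac{1}{2C^2}$), then observe that for $h\in[0,1]$ one has $h+|h-h^2|=2h-h^2\le 1$, giving $\|R^{\widetilde\nabla}\|_g\le (C\|dh\|_g+1)\|R^{f^*\nabla}\|_g\le\mathfrak{c}\cdot\|R^\nabla\|_g$. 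The same correction applies verbatim to part~b): nothing is ``flat on the collar'' in the $TM$-directions, but the bound $\|R^{\tilde f^*\nabla}\|_{TM,g}<\epsilon$ still holds and the arithmetic is identical.
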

\begin{proof}
a) We consider $(f^*\bundle{E},f^*\nabla )\to M$ and obtain $\| R^{f^*\nabla }\| _g\leq \| df\| _g^2\cdot \| R^\nabla \| _g<\epsilon $. Since $f\simeq \mathrm{id}$ there is a unitary bundle isomorphism $\Upsilon :\bundle{E}\to f^*\bundle{E}$. On $B_{3\delta }N$ the Hermitian bundle $(f^*\bundle{E},f^*\nabla )$ is the pull back of $(\bundle{E}_{|N},\nabla )\to N$, hence we can apply theorem \ref{thm_uhl}. In particular, there is a trivialization $\Phi :f^*\bundle{\bundle{E}}_{|B_{3\delta }N}\to B_{3\delta }N\times \C ^m$ such that $A:=f^*\nabla -\Phi ^*d$ satisfies $\| A\| _g\leq C\cdot \| R^{f^*\nabla }\| _g$ on $B_{3\delta }N$. Moreover, $R^{f^*\nabla }=dA+A\wedge A$ yields
\[
\| dA\| _g\leq \| R^{f^*\nabla }\| _g+2\| A\| _g^2\leq (1 +2C^2\| R^{f^*\nabla }\| _g)\| R^{f^*\nabla }\| _g\leq 2\| R^{f^*\nabla }\| _g.
\]
Define the connection $\overline{\nabla }$ on the bundle  $f^*\bundle{E}$ by:
\[
\overline{\nabla }=\Phi ^*d+h\cdot A \qquad \text{on}\quad B_{3\delta }N
\]
and by $\overline{\nabla }=f^*\nabla $ on $M\setminus B_{3\delta }N$. Then $\overline{\nabla }$ is certainly a smooth connection with $\| R^{\overline{\nabla }}\| _g\leq \| df\| _g^2\cdot \| R^\nabla \| _g$ on $M\setminus B_{3\delta }N$ and
\[
\begin{split}
\| R^{\overline{\nabla }}\| _g&=\| d(hA)+h^2A\wedge A\| _g\leq \| dh \| _g\cdot \| A\| _g+\|h^2R^{f^*\nabla }+h(1-h)dA\| _g\\
&\leq \left( C\| dh\| _g+\max [h^2+2h(1-h)]\right) \| R^{f^*\nabla }\| _g=\mathfrak{c} \cdot \| R^\nabla \| _{g} 
\end{split}
\]
on $B_{3\delta }N$. Moreover, by construction $\overline{\nabla }=\Phi ^*d$ holds on $B_\delta N$ and thus, $\widetilde{\nabla }:=\Upsilon ^*\overline{\nabla }$ is a Hermitian connection on $\bundle{E}$ which satisfies the conditions (1) and (2). Notice that $\Psi =\Phi \circ \Upsilon $ provides the corresponding local trivialization of $\bundle{E}_{|B_{\delta }N}$.

b) Consider the map $\tilde f=f\times \mathrm{id}:M\times S^1\to M\times S^1$, then $\tilde f$ is homotopic to $\mathrm{id}_{M\times S^1}$, i.e.~there is a bundle isomorphism $\Upsilon :\bundle{E}\to \tilde f^*\bundle{E}$. On $B_{3\delta }N\times S^1$ the bundle $(\tilde f^*\bundle{E},\tilde f^*\nabla )$ is the pull back of $\bundle{E}_{|N \times S^1}$, hence theorem \ref{thm_uhl2} provides a trivialization $\Phi :\tilde f ^*\bundle{E}_{B_{3\delta }N\times S^1}\to B_{3\delta }N\times S^1\times \C ^m$ such that $A:=\tilde f^*\nabla -\Phi ^*d$ satisfies $\| A\| _{TM,g}\leq C\cdot \| R^{\tilde f^*\nabla }\| _{TM,g}$ on $B_{3\delta }N\times S^1$. Here we use that any complex bundle on $S^1$ is trivial, in fact $\nabla ^y-d\in \Omega ^1S^1\otimes \frak{u}(\tilde f^*\bundle{E}_{y,S^1})$ for $y\in N$. Define the connection $\overline{\nabla }=\Phi ^*d+h\cdot A$ on $B_{3\delta }N\times S^1 $ and $\overline{\nabla }=\tilde f^*\nabla $ outside $B_{3\delta }N\times S^1$, then
\[
\| R^{\overline{\nabla }}\| _{TM,g}\leq \mathfrak{c} \| R^\nabla \| _{TM,g}
\]
holds by the estimates in a). Hence $\widetilde{\nabla }:=\Upsilon ^*\overline{\nabla }$ satisfies the condition (1), (2) and $\Psi =\Phi \circ \Upsilon $. Notice that we have no control of $\| A\| _{TS^1,dt^2}$ in terms of the curvature. 
\end{proof}
\begin{lem} \label{lem42}
Suppose that $\pi _1(N,x)$ is finite for all $x\in N$. Let $(\bundle{E},\nabla )\to M$ (respectively $(\bundle{E},\nabla )\to M\times S^1$) be a Hermitian bundle with $\| R^\nabla \| _{TM,g}<\frac{\epsilon }{\| df\| _g^2}$, then there is a Hermitian bundle $(\widetilde{\bundle{E}},\widetilde{\nabla })$ on $M$ (resp.~on $M\times S^1$) such that
\begin{enumerate}
\item $\widetilde{\bundle{E}}=\bigoplus ^m\bundle{E}$ for some integer $m>0$.
\item $\| R^{\widetilde{\nabla }}\| _{TM,g}\leq \mathfrak{c} \cdot \| R^\nabla \| _{TM,g}$.
\item $(\widetilde{\bundle{E}},\widetilde{\nabla })$ is trivial in a neighborhood of $N$ (respectively $N\times S^1$).
\end{enumerate}
\end{lem}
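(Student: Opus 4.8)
The plan is to reduce Lemma~\ref{lem42} to the simply connected case, Lemma~\ref{lem41}, by passing to the universal covering of $N$. I describe the argument for a bundle over $M$; the case over $M\times S^1$ is identical, with Theorem~\ref{thm_uhl} replaced by Theorem~\ref{thm_uhl2}, $\|\cdot\|_g$ replaced by $\|\cdot\|_{TM,g}$, and the same proviso as in Lemma~\ref{lem41}b) that only curvature in the $M$--directions is controlled. First I would reduce to the case that $N$ is connected: the components of $N$ have pairwise disjoint tubular neighbourhoods, the constructions below apply to each one separately, and one then takes $m$ to be a common multiple of the integers obtained for the individual components (padding a bundle $\bundle{E}^{\oplus m_i}$ by further copies of $\bundle{E}$ does no harm, a direct sum of bundles trivial near a component being trivial there). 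So assume $N$ connected, let $p\colon\tilde N\to N$ be the universal covering with finite deck transformation group $G$, and recall that $C$ and $\epsilon$ were fixed at the start of this section to be the constants of Theorem~\ref{thm_uhl} for $(\tilde N,\tilde g)$, $\tilde g=p^*g|_N$. As in the proof of Lemma~\ref{lem41} one first replaces $(\bundle{E},\nabla)$ by $(f^*\bundle{E},f^*\nabla)$, so that over $B_{3\delta}N$ the bundle is the pullback under $f$ of $(\bundle{E}|_N,\nabla|_N)$ and has curvature $<\epsilon$ there.

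The second step is to trivialize after pulling back to the covering. Since $p$ is a local $\tilde g$--isometry, $\|R^{p^*(\nabla|_N)}\|_{\tilde g}=\|R^{\nabla|_N}\|_g<\epsilon$ (here $f|_N=\mathrm{id}_N$ and $\|R^{f^*\nabla}\|_g<\epsilon$), so Theorem~\ref{thm_uhl} yields a unitary trivialization $\tilde\Psi$ of $p^*(\bundle{E}|_N)$ with $\|p^*(\nabla|_N)-\tilde\Psi^*d\|_{\tilde g}\le C\|R^\nabla\|_g$. The canonical lift of the $G$--action to $p^*(\bundle{E}|_N)$ commutes with $p^*(\nabla|_N)$, and in the gauge $\tilde\Psi$ it is a unitary cocycle $(\theta_\sigma)_{\sigma\in G}$, $\theta_\sigma\colon\tilde N\to\mathrm{U}(r)$, $r=\mathrm{rk}\,\bundle{E}$; since the $G$--action is isometric on the base and parallel in the fibres, $\theta_\sigma^{-1}d\theta_\sigma$ is bounded in operator norm by $2\|p^*(\nabla|_N)-\tilde\Psi^*d\|_{\tilde g}\le 2C\|R^\nabla\|_g$. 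Hence each $\theta_\sigma$ is $O(\|R^\nabla\|_g)$--close to a constant; evaluation at a base point gives a near--representation $G\to\mathrm{U}(r)$, and — a finite group having only finitely many unitary representations in each dimension, so that $\mathrm{Hom}(G,\mathrm{U}(r))$ is rigid — this near--representation lies within $O(\|R^\nabla\|_g)$ of a genuine representation $\varrho\colon G\to\mathrm{U}(r)$. Thus, after a gauge transformation, $(\bundle{E}|_N,\nabla|_N)$ is $O(\|R^\nabla\|_g)$--close, in the operator norm of connection forms, to the flat bundle $\tilde N\times_\varrho\C^{r}$ with its flat connection.

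The third step is the one that forces the multiplicity $m$. Because $p^*(\bundle{E}|_N)$ is trivial and $p_*$ is surjective on rational homology, the rational Chern classes of $\bundle{E}|_N$ vanish, so $[\bundle{E}|_N]-r$ is a torsion element of $\tilde K^0(N)$; from the projection--formula relation $([\bundle{E}|_N]-r)\cdot p_*[\underline{\C}]=0$ (valid since $p^*([\bundle{E}|_N]-r)=0$) and the nilpotence of $\tilde K^0(N)$ (for an $n$--complex, $(\tilde K^0)^{n+1}=0$) one deduces that $[\bundle{E}|_N]-r$ is annihilated by a power of $|G|$. Taking $m$ to be such a power, large enough that $rm$ lies in the stable range for complex bundles over the compact manifold $N$, the bundle $\bundle{E}^{\oplus m}|_{B_{3\delta}N}$ becomes topologically trivial. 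One would then want to exhibit on $\bundle{E}^{\oplus m}|_{B_{3\delta}N}$ a connection $\nabla^{0}$ and a topological trivialization $\Psi$ with $\|\nabla^{\oplus m}-\nabla^{0}\|_g\le C'\|R^\nabla\|_g$ and $\nabla^{0}=\Psi^*d$ on $B_\delta N$ — an Uhlenbeck--type gauge over $B_{3\delta}N$ — using the almost--flat model $\tilde N\times_\varrho\C^{r}$ of the previous step to straighten things out. Granting such a pair $(\nabla^{0},\Psi)$, the proof ends exactly as in Lemma~\ref{lem41}: with the cut--off function $h$ of this section and an isomorphism $\bundle{E}\cong f^*\bundle{E}$ one sets $\widetilde\nabla:=\nabla^{\oplus m}+(1-h)\bigl(\nabla^{0}-\nabla^{\oplus m}\bigr)$ on $B_{3\delta}N$ and $\widetilde\nabla:=\nabla^{\oplus m}$ elsewhere; the curvature computation of Lemma~\ref{lem41}a) applies verbatim because $\nabla^{0}-\nabla^{\oplus m}$ is $O(\|R^\nabla\|_g)$ on the support of $dh$, giving $\|R^{\widetilde\nabla}\|_g\le\mathfrak{c}\|R^\nabla\|_g$, while $\widetilde\nabla=\Psi^*d$ near $N$ by construction. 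This produces $(\widetilde\bundle{E},\widetilde\nabla)=(\bundle{E}^{\oplus m},\widetilde\nabla)$ with properties (1)--(3).

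The main obstacle I anticipate is precisely the gauge $(\nabla^{0},\Psi)$ of the third step: it is an Uhlenbeck--type estimate over the space $B_{3\delta}N$, which has finite but generally nontrivial fundamental group, and it is here that passing to several copies of $\bundle{E}$ is genuinely necessary — the monodromy of the almost--flat connection on $\bundle{E}|_N$ obstructs any small--connection--form trivialization, and only after absorbing its $\tilde K^0(N)$--class, which costs a power of $|\pi_1(N)|$, and exploiting the rigidity of $\mathrm{Hom}(\pi_1(N),\mathrm{U}(k))$, does this obstruction vanish. Everything before and after this step transcribes Lemma~\ref{lem41} and its proof, and the $M\times S^1$ version requires nothing new beyond Theorem~\ref{thm_uhl2} and the remark in Lemma~\ref{lem41}b).
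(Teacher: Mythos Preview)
Your route is far more elaborate than the paper's, and the gap you flag in your third step is precisely what the paper claims to sidestep with a single construction: the push-forward along the finite cover. With $\bundle{F}=\bundle{E}|_N$ and $p:\bar N\to N$ the universal cover, Theorem~\ref{thm_uhl} gives a trivialization $\bar\Psi:p^*\bundle{F}\to\bar N\times\C^k$ with $\|\bar\nabla-\bar\Psi^*d\|\le C\|R^\nabla\|_g$. The paper then forms the direct-image bundle $p_!(p^*\bundle{F},\bar\nabla)$ over $N$, whose fiber at $y$ is $\bigoplus_{x\in p^{-1}(y)}(p^*\bundle{F})_x$, asserts that this is $\bigoplus^m(\bundle{F},\nabla)$ for $m=|\pi_1(N)|$, and reads off a trivialization $\Psi$ of it from $\bar\Psi$ summand by summand:
\[
(w_1,\dots,w_m)\longmapsto(\mathrm{pr}_2\bar\Psi(w_1),\dots,\mathrm{pr}_2\bar\Psi(w_m))\in\C^{mk}.
\]
Because $\Psi$ is built from $\bar\Psi$ fiberwise, the bound $\|\nabla^{\oplus m}-\Psi^*d\|_g=\|\bar\nabla-\bar\Psi^*d\|\le C\|R^\nabla\|_g$ is immediate, and one finishes verbatim as in Lemma~\ref{lem41}. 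None of your cocycle analysis, rigidity of $\mathrm{Hom}(G,\mathrm{U}(r))$, or $\tilde K^0$ arithmetic is invoked; and $m$ is $|\pi_1(N)|$ (the lcm over components in the disconnected case), not a high power of it.

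Your instinct that something delicate happens here is sound, however. The paper's formula for $\Psi$ silently presupposes a global ordering of the sheets of $p$, and without one the push-forward identifies only as $p_!p^*\bundle{F}\cong\bundle{F}\otimes p_!\underline{\C}$, where $p_!\underline{\C}$ is the flat bundle associated to the regular representation --- in general not topologically trivial, hence not $\underline{\C}^m$ (already $N=\R P^2$ gives a counterexample). So the monodromy obstruction you were trying to absorb via K-theory is lurking in the paper's argument too, just suppressed. For the actual application downstream (the Chern-character K-area in the subsequent corollary) this is immaterial, since $\mathrm{ch}(p_!\underline{\C})=m$ rationally; but a literal proof that one may take $\widetilde{\bundle{E}}=\bundle{E}^{\oplus m}$ with a small-connection-form trivialization near $N$ requires more than either argument supplies as written.
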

\begin{proof}
We mention only the modifications to the last proof. Let $N$ be connected and consider the restriction $(\bundle{F},\nabla )=(\bundle{E},\nabla )_{|N}$ as well as the universal covering $p:\bar N\to N$, then $p^*\bundle{F}$ is trivial by theorem \ref{thm_uhl}, in fact there is a trivialization $\bar \Psi :p^*\bundle{F}\to \bar N\times \C ^k$ with $\| \bar \nabla -\bar \Psi ^*d\| \leq C\cdot \| R^{\nabla }\| _g$ where $\bar \nabla $ is induced by $\nabla $. The push forward bundle $p_{!}(p^*\bundle{F},\bar \nabla )\to N$ is (up to an isomorphism) given by $\bigoplus ^m(\bundle{F},\nabla )$ where $m=| \pi _1(N)|$. Moreover, $\bar \Psi $ determines a trivialization of $p_{!}p^*\bundle{F}$ by
\[
(p_!p^*\bundle{F})_y=\bigoplus _{x\in p^{-1}(y)}(p^*\bundle{F})_x\ni (w_1,\ldots ,w_m)\mapsto (\mathrm{pr}_2\bar \Psi (w_1),\ldots ,\mathrm{pr}_2\bar \Psi (w_m))\in \C ^{m\cdot k}.
\]
This yields a trivialization $\Psi :\bigoplus ^m\bundle{F}\to N\times \C ^{m\cdot k} $ with $\| \nabla -\Psi ^*d\| _g=\| \bar \nabla -\bar \Psi ^*d\| _g\leq C\cdot \| R^\nabla \| _g$. Thus, in place of $\bundle{E}$ we use the bundle $\widetilde{\bundle{E}}:=\bigoplus ^m\bundle{E}$ on $M$ and a suitable trivialization near $N$ to conclude the claim. Then we proceed as in the proof of the last lemma. If $N$ is not connected, define $m$ to be the lowest common multiple of the integers $|\pi _1(N,x_1)|,\ldots ,| \pi _1(N,x_s)|$ where $x_1,\ldots ,x_n$ represent $\pi _0(N)$.  
\end{proof}
\begin{cor}
(i) Suppose that all components of $N$ have trivial fundamental group, then $\ke (M_g;\theta )>\frac{\| df\| _g^2}{\epsilon }$ implies
\[
\ke (M_g;\theta )\leq \mathfrak{c} \cdot \ke (M_g,N;j_*\theta )
\]
for all $\theta \in H_{k}(M;G)$ and $k>0$ where $j:M\to (M,N)$.\\
(ii) If $|\pi _1(N,x)|<\infty $ for all $x\in N$, then $\ke _\mathrm{ch}(M_g;\theta )>\frac{\| df\| _g^2}{\epsilon }$ implies
\[
\ke _\mathrm{ch}(M_g;\theta )\leq \mathfrak{c} \cdot \ke (M_g,N;j_*\theta )
\]
for all $\theta \in H_{k}(M;\Q )$ and $k>0$. Here $\ke _\mathrm{ch}$ denotes the K--area for the Chern character (cf.~\cite[sec.~3]{List10}). 
\end{cor}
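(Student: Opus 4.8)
The plan is to read the corollary off Lemmas \ref{lem41} and \ref{lem42}: a bundle with small curvature that is homologically nontrivial for $\theta$ gets traded, at the cost of the factor $\mathfrak c$ in the curvature, for one that is trivial near $N$, and the only point to check is that homological nontriviality for $j_*\theta$ survives before passing to the infimum defining the K--area.

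I would first do part (i) for \emph{even} $k$. Choose $(\bundle{E}_i,\nabla_i)\in\mathscr{V}(M;\theta)$ with $\|R^{\nabla_i}\|_g\to\ke(M_g;\theta)^{-1}$. Since $\ke(M_g;\theta)>\|df\|_g^2/\epsilon$, eventually $\|R^{\nabla_i}\|_g<\epsilon/\|df\|_g^2$, so Lemma \ref{lem41}a) produces a connection $\widetilde\nabla_i$ on the \emph{same} bundle $\bundle{E}_i$, trivial near $N$, with $\|R^{\widetilde\nabla_i}\|_g\le\mathfrak c\,\|R^{\nabla_i}\|_g$. To see $(\bundle{E}_i,\widetilde\nabla_i)\in\mathscr{V}(M,N;j_*\theta)$ only condition (ii) needs work: after homotoping $\rho^{\bundle{E}_i}$ so that $\rho^{\bundle{E}_i}(N)$ meets each $\mathrm{BU}_m$ in at most a point, naturality of the classifying map gives $\rho^{\bundle{E}_i}_*(j_*\theta)=q_*\rho^{\bundle{E}_i}_*(\theta)$ with $q_*\colon H_k(\coprod_m\mathrm{BU}_m;G)\to H_k(\coprod_m\mathrm{BU}_m,\rho^{\bundle{E}_i}(N);G)$ induced by the inclusion of pairs; as $k\ge1$, each $\mathrm{BU}_m$ is path connected and $\rho^{\bundle{E}_i}(N)$ is finite, $q_*$ is injective, so $\rho^{\bundle{E}_i}_*(\theta)\ne0$ forces $\rho^{\bundle{E}_i}_*(j_*\theta)\ne0$ (in particular $j_*\theta\ne0$). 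Hence $\ke(M_g,N;j_*\theta)\ge\|R^{\widetilde\nabla_i}\|_g^{-1}\ge(\mathfrak c\,\|R^{\nabla_i}\|_g)^{-1}$, and $i\to\infty$ yields $\ke(M_g,N;j_*\theta)\ge\mathfrak c^{-1}\ke(M_g;\theta)$.

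For \emph{odd} $k$ I would pass to $M\times S^1$. Pick a circle metric with $\ke(M_g\times S^1_{dt^2};\theta\times[S^1])$ arbitrarily close to $\ke(M_g;\theta)$ and a bundle $(\bundle{E},\nabla)$ over this $M\times S^1$ detecting $\theta\times[S^1]$ with $\|R^\nabla\|_g$ close to the corresponding infimum; then $\|R^\nabla\|_{TM,g}\le\|R^\nabla\|_g<\epsilon/\|df\|_g^2$, so Lemma \ref{lem41}b) gives $\widetilde\nabla$ on $\bundle{E}$, trivial near $N\times S^1$, with $\|R^{\widetilde\nabla}\|_{TM,g}\le\mathfrak c\,\|R^\nabla\|_{TM,g}$, and $(\bundle{E},\widetilde\nabla)$ detects $j_*\theta\times[S^1]=(j\times\mathrm{id})_*(\theta\times[S^1])$ exactly as before. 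The hard part, as the remark after Lemma \ref{lem41} warns, is that the $dt$--components of $R^{\widetilde\nabla}$ are not controlled by the curvature, so the full norm $\|R^{\widetilde\nabla}\|_g$ on this fixed $M\times S^1$ is useless. I would circumvent this by keeping $\widetilde\nabla$ but enlarging the circle to circumference $L$: each mixed component $R^{\widetilde\nabla}(X,\partial_t)$ with $X\in TM$, and the purely $S^1$ component, then scales like $O(1/L)$ while the $TM$--components are unchanged, so $\limsup_{L\to\infty}\|R^{\widetilde\nabla}\|_g\le\|R^{\widetilde\nabla}\|_{TM,g}\le\mathfrak c\,\|R^\nabla\|_g$. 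As $(\bundle{E},\widetilde\nabla)$ over the enlarged $M\times S^1$ lies in $\mathscr{V}(M\times S^1,N\times S^1;j_*\theta\times[S^1])$ and $\ke(M_g,N;j_*\theta)$ is the supremum of $\ke(M_g\times S^1_{dt^2},N\times S^1;j_*\theta\times[S^1])$ over all circle metrics, letting $L\to\infty$ and then passing to optimality gives $\ke(M_g,N;j_*\theta)\ge\mathfrak c^{-1}\ke(M_g;\theta)$ once more.

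For part (ii) I would rerun both arguments, starting from bundles near--optimal for $\ke_{\mathrm{ch}}(M_g;\theta)$ and invoking Lemma \ref{lem42} instead of Lemma \ref{lem41}. The modification now replaces $\bundle{E}$ by $\widetilde{\bundle{E}}=\bigoplus^m\bundle{E}$, which multiplies both $\mathrm{ch}$ and the rank by $m\ne0$, so Chern--character nontriviality is unaffected; since $\widetilde{\bundle{E}}$ is trivial near $N$, its relative Chern character pairs with $j_*\theta$ just as the absolute one pairs with $\theta$, and, the components of $\mathrm{ch}$ being rational characteristic classes, nontriviality of this pairing forces $\rho^{\widetilde{\bundle{E}}}_*(j_*\theta)\ne0$, i.e.\ $(\widetilde{\bundle{E}},\widetilde\nabla)\in\mathscr{V}(M,N;j_*\theta)$ over $\Q$. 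The curvature bound of Lemma \ref{lem42} together with the limiting procedure above then gives $\ke(M_g,N;j_*\theta)\ge\mathfrak c^{-1}\ke_{\mathrm{ch}}(M_g;\theta)$. I expect the odd--degree rescaling step — checking that lengthening $S^1$ makes the uncontrolled mixed curvature terms negligible — to be the one genuinely delicate point; everything else is bookkeeping with Lemmas \ref{lem41}, \ref{lem42} and the definitions.
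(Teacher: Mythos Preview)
Your proposal is correct and follows essentially the same route as the paper: apply Lemma~\ref{lem41} (resp.~\ref{lem42}) to bundles near the infimum, check that the classifying map stays nontrivial for $j_*\theta$ via the injectivity of $H_k(\mathrm{BU}_m)\to H_k(\mathrm{BU}_m,\mathrm{pt})$ for $k>0$, and for odd $k$ rescale the $S^1$ factor so that the uncontrolled mixed curvature terms become negligible compared to $\|R^{\widetilde\nabla}\|_{TM,g}$. One small slip: there is no ``purely $S^1$ component'' of a $2$-form on $M\times S^1$, only the $\Lambda^2TM$ part and the mixed $TM\wedge TS^1$ part, but this does not affect your argument.
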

\begin{proof}
(i) We start with the case $k\in 2\Z $. Let $(\bundle{E},\nabla )\to M$ be a Hermitian bundle of constant rank $m$, curvature $\| R^\nabla \| _g<\frac{\epsilon }{\| df\| _g^2}$ and $\rho ^\bundle{E}_*(\theta )\neq 0$ for the classifying map $\rho ^\bundle{E}:M\to \mathrm{BU}_m $. Then the restriction of $\bundle{E}$ to $N$ is trivial by theorem \ref{thm_uhl}, i.e.~$\bundle{E}$ can be regarded as a relative bundle and we can choose the classifying map $\rho ^\bundle{E}$ to be constant $u\in \mathrm{BU}_m$ on $N$. In particular, $\rho ^\bundle{E}_*:H_k(M,N;G)\to H_k(\mathrm{BU}_m ,u;G)$ satisfies $\rho ^\bundle{E}_*j_*(\theta )=j'_*\rho ^\bundle{E}_*(\theta )\neq 0$ for $j':\mathrm{BU}_m \to (\mathrm{BU}_m ,u)$. Lemma \ref{lem41}a) yields a connection $\widetilde{\nabla }$ which is trivial on a neighborhood of $N$ and with $\| R^{\widetilde{\nabla }}\| _g\leq \mathfrak{c} \| R^\nabla \| _g$, i.e.~$(\bundle{E},\widetilde{\nabla })\in \mathscr{V}(M,N;j_*\theta )$. Considering the case $\| R^{(\bundle{E},\nabla )}\| _g\to 1/\ke (M_g;\theta )$ completes the proof. 

Suppose now that $k $ is odd. Let $(\bundle{E},\nabla )\to M\times S^1$ be a Hermitian bundle of constant rank, curvature $\| R^\nabla \| _{g\oplus dt^2}<\frac{\epsilon }{\| df\| ^2_g}$ for some line element $dt^2$ of $S^1$ and $\rho ^\bundle{E}(\theta \times [S^1])\neq 0$. By lemma \ref{lem41}b) we can choose the classifying map $\rho ^\bundle{E}$ to be constant on $N\times S^1$ and $\rho ^\bundle{E}_*(j_*\theta \times [S^1])=j'\rho ^\bundle{E}_*(\theta \times [S^1])\neq 0$. Moreover, the lemma also provides a connection $\widetilde{\nabla }$ on $\bundle{E}$ which is trivial in a neighborhood of $N\times S^1$ and with $\| R^{\widetilde{\nabla }}\| _{TM,g}\leq \mathfrak{c} \| R^\nabla \| _{g\oplus dt^2}$. Hence, $(\bundle{E},\widetilde{\nabla })\in \mathscr{V}(M\times S^1,N\times S^1;\theta \times [S^1])$ and since the value $\alpha :=\| R^{\widetilde{\nabla }}\| _{g\oplus dt^2}$ is finite, 
\[
\| R^{\widetilde{\nabla }}\| _{g\oplus r^2\cdot dt^2}\leq \max \left\{ \mathfrak{c} \| R^\nabla \| _{g\oplus dt^2} ,\frac{\alpha }{r}\right\} \leq \mathfrak{c} \| R^\nabla \| _{g\oplus dt^2}
\] 
holds for  $\frac{\alpha }{\mathfrak{c} \| R^{\nabla }\| } \leq r<\infty   $. This  proves
\[
\ke (M_g\times S^1_{dt^2};\theta \times [S^1])\leq \mathfrak{c} \cdot  \sup _{ds^2}\ke (M_g\times S^1_{ds^2},N\times S^1;\theta \times [S^1])
\]
for all line elements $dt^2$ on $S^1$.

(ii) This follows analogously to case (i). The K--area for the Chern character is needed to compensate the usage of the bundle $\bigoplus ^m\bundle{E}$ instead of $\bundle{E}$. However, according to \cite[prop.~3.1]{List10} the K--area for the Chern character determines the rational K--area homology uniquely. We start with a bundle $(\bundle{E},\nabla )\to M$ of curvature $\| R^\nabla \| _g<\frac{\epsilon }{\| df\| _g^2}$ and $\left< \mathrm{ch}(\bundle{E}),\theta \right> \neq 0$. Consider the associated bundle $(\widetilde{\bundle{E}},\widetilde{\nabla })$ from lemma \ref{lem42}, then $\left< \mathrm{ch}(\widetilde{\bundle{E}}),\theta \right> =m\cdot \left< \mathrm{ch}(\bundle{E}),\theta \right> \neq 0$ proves $\rho ^{\widetilde{\bundle{E}}}(\theta )\neq 0$. Hence, $\| R^{\widetilde{\nabla }}\| _g\leq \mathfrak{c}\| R^\nabla \| _g$ and $(\widetilde{\bundle{E}},\widetilde{\nabla })\in \mathscr{V}(M,N;j_*\theta )$ yield the inequality if $k\in 2\Z $ [consider $\| R^{(\bundle{E},\nabla )}\| _g\to 1/\ke _\mathrm{ch}(M_g;\theta )$]. The remaining case is left to the reader.
\end{proof}

\begin{thm}
\label{sub_thm}
If $N\subseteq M$ is a closed submanifold with $\pi _1(N,x)=0$ for all $x\in N$, then 
\[
\stackrel{\partial _*}\longrightarrow \H _k(N;G)\stackrel{i_*}\longrightarrow \H _k(M;G)\stackrel{j_*}\longrightarrow \H _k(M,N;G)\stackrel{\partial _*}\longrightarrow \H _{k-1}(N;G) \stackrel{i_*}\longrightarrow 
\]
is a well defined chain complex with $\mathrm{Im}(i_*)= \ker (j_*)$ and $\mathrm{Im}(j_*)=\ker (\partial _*)$ for all $k$ and coefficient groups $G$. Moreover, the same is true if all path components of $N$ have finite fundamental group and $G\in \{ \Z ,\Q \} $. 
\end{thm}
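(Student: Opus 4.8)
The plan is to establish exactness of the sequence
\[
\H_k(N;G)\xrightarrow{i_*}\H_k(M;G)\xrightarrow{j_*}\H_k(M,N;G)\xrightarrow{\partial_*}\H_{k-1}(N;G)
\]
by comparing it term-by-term with the long exact sequence of the pair $(M,N)$ in singular homology, using that each $\H$-group is a subgroup (submodule, subspace) of the corresponding singular homology group by fact (1). The inclusions $\H_*\hookrightarrow H_*$ are compatible with $i_*,j_*,\partial_*$ by naturality, so the $\H$-sequence is automatically a chain complex: if $\alpha\in\H$ with (say) $j_*\alpha=0$ in $\H_k(M,N)$, then $j_*\alpha=0$ in $H_k(M,N)$, hence $\alpha=i_*\beta$ for some $\beta\in H_k(N)$ by singular exactness; the content is always showing that the preimage $\beta$ can be chosen inside $\H$, i.e.\ that it has finite K--area.

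First I would treat $\mathrm{Im}(i_*)\supseteq\ker(j_*)$. Given $\alpha\in\H_k(M;G)$ with $j_*\alpha=0$, singular exactness produces $\beta\in H_k(N;G)$ with $i_*\beta=\alpha$. Since $N$ is closed with finite fundamental group on each component, fact (12) (or its proof via the covering argument and Theorems~\ref{thm_uhl}, \ref{thm_uhl2}) gives $\H_k(N;G)=H_k(N;G)$ for $k>0$ in the relevant coefficient cases, so $\beta\in\H_k(N;G)$ automatically; the case $k=0$ is trivial since $\H_0$ vanishes and the statement is about reduced groups on each component, or one argues directly. Next, $\mathrm{Im}(j_*)\supseteq\ker(\partial_*)$: given $\gamma\in\H_k(M,N;G)$ with $\partial_*\gamma=0$, pick $\alpha\in H_k(M;G)$ with $j_*\alpha=\gamma$; the corollary preceding the theorem (with $f,h$ and the constant $\mathfrak{c}$) shows that if $\ke(M_g;\alpha)$ were too large—larger than $\|df\|_g^2/\epsilon$—then $\ke(M_g;\alpha)\le\mathfrak{c}\cdot\ke(M_g,N;j_*\alpha)=\mathfrak{c}\cdot\ke(M_g,N;\gamma)<\infty$, a contradiction; either way $\alpha$ has finite K--area, so $\alpha\in\H_k(M;G)$ and $j_*\alpha=\gamma$. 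The remaining inclusion $\mathrm{Im}(\partial_*)\subseteq\ker(i_*)$, i.e.\ that the sequence is a chain complex, is just functoriality applied inside the singular complex. For $G=\Q$ (or $\Z$) with finite $\pi_1$ one runs the same argument using the Chern-character K--area $\ke_\mathrm{ch}$ and part (ii) of the corollary, together with \cite[prop.~3.1]{List10} to pass back to ordinary rational K--area homology; the simply connected case already covers arbitrary $G$.

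The main obstacle is the step $\mathrm{Im}(j_*)\supseteq\ker(\partial_*)$, and specifically the odd-degree case: the K--area of an odd class is defined via stabilization by $S^1$ and a supremum over line elements $dt^2$, so one must produce, for a class $\alpha\in H_{2\ell+1}(M)$ with $\gamma=j_*\alpha$ of finite K--area, a uniform bound on $\ke(M_g\times S^1_{dt^2};\alpha\times[S^1])$ across all $dt^2$. Here Lemma~\ref{lem41}b) and Theorem~\ref{thm_uhl2} are essential: given a small-curvature bundle on $M\times S^1$ detecting $\alpha\times[S^1]$, one modifies its connection near $N\times S^1$ to a connection that is trivial there while controlling only the $\|\cdot\|_{TM,g}$-norm of the curvature (not the $S^1$-directions), which is exactly the norm entering the definition of relative K--area on the pair $(M\times S^1, N\times S^1)$. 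The rescaling trick $g\oplus r^2\,dt^2$ for large $r$ in the corollary's proof then absorbs the uncontrolled $S^1$-part and yields the metric-independent bound. Once this is in place the theorem follows by assembling the three inclusions.
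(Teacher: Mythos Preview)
Your approach matches the paper's and is essentially correct, but one step is glossed over: the well-definedness of $\partial _*$ at $k=1$. You assert that the $\H $-sequence is ``automatically a chain complex'', but that presupposes each map actually lands in the displayed target. For $k\geq 2$ this comes for free since $\H _{k-1}(N;G)=H_{k-1}(N;G)$ by fact~(\ref{ite12}); for $k=1$ it does not, because $\partial _*:H_1(M,N;G)\to H_0(N;G)$ can certainly be nonzero (think of $N$ with several components joined by arcs in $M$), while the target $\H _0(N;G)$ is $0$. The paper closes this gap by a separate argument: it shows $\H _1(M,N;G)\subseteq \mathrm{Tor}(H_1(M,N))\otimes G$ by observing that any class outside the torsion part is detected by some map $f:(M,N)\to (S^1,x)$, and then functoriality of $\H $ together with $\H _1(S^1,x;G)=0$ forces such a class out of $\H _1(M,N;G)$. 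Since $H_0(N)$ is torsion free, $\partial _*$ annihilates $\mathrm{Tor}(H_1(M,N))\otimes G$, so $\partial _*$ indeed restricts to a map into $\H _0(N;G)=0$. Once this is in place, the rest of your outline (including the odd-degree rescaling discussion and the passage via $\ke _{\mathrm{ch}}$ for finite $\pi _1$) agrees with the paper's proof.
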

\begin{proof}
If all components of $N$ are simply connected, $\H _k(N;G)=H_k(N;G)$ holds for all $k>0$. Thus, the sequence is a well defined chain complex for $k>1$. Moreover, $\mathrm{Im}(i_*)\subseteq \ker (j_*)$ and $\H _k(N;G)=H_k(N;G)$ yield $\mathrm{Im}(i_*)=\ker (j_*)$. It remains to show $\mathrm{Im}(j_*)=\ker (\partial _*)$. Again ''$\subseteq $'' follows from singular theory. Now suppose $\theta \in \H _k(M,N;G)\cap \ker (\partial _*)$, then there is some $\eta \in H_k(M;G)$ with $j_*\eta =\theta $. The last corollary proves $\eta \in \H _k(M;G)$, i.e.~$\mathrm{Im}(j_*)=\ker (\partial _*)$. The case $k=0$ follows from $\H _0(M,X;G)=\{ 0\} $ for all $(M,X)$. The case $k=1$ follows analogously to $k>1$ if $\partial _*:\H _1(M,N;G)\to \H _0(N;G)=0$ is well defined. The universal coefficient theorem implies $H_1(M,N;G)=H_1(M,N)\otimes G$, i.e.~$\partial _*(\mathrm{Tor}(H_1(M,N))\otimes G)=0 $ where $\mathrm{Tor}(H)$ denotes the torsion subgroup of $H$. If $\theta \notin H_1(M,N;G)\setminus \mathrm{Tor}(H_1(M,N))\otimes G$, there is a map $f:(M,N)\to (S^1,x)$ with $f_*(\theta )\neq 0$. Hence, $\H _1(S^1,x;G)=0$ and the functoriallity of $\H $ prove $\H _1(M,N;G)\subseteq \mathrm{Tor}(H_1(M,N))\otimes G$, i.e.~$\partial _*:\H _1(M,N;G)\to \H _0(N;G)$ is well defined.

Suppose now that $\pi _1(N,x)$ is finite for all $x\in N$, then $\H _k(N;\Q )=H_k(N;\Q )$ holds for all $k>0$ by fact (\ref{ite12}) in section 3. Hence, the above arguments, \cite[prop.~3.1]{List10} and the last corollary provide the claim. The case $G=\Z $ follows immediately from fact (\ref{ite5}) in section 3. 
\end{proof} 
\begin{example}
If the submanifold is not simply connected, plenty of counterexamples to theorems \ref{sub_thm} and \ref{main_thm} exist:
\begin{enumerate}
\item[a)]  We know $\H _*(S^1)=0$ and $\H _*(T^2)=0$ whereas $\H _2(T^2,S^1\times \{ x\} )=\Z $, in fact $\partial _*:\H _2(T^2, \{ x\} \times S^1)\to \H _1(S^1)$ respectively the long homology sequence are well defined but $0=\mathrm{Im}(j_*)\neq \ker (\partial _*)=\Z $.
\item[b)] If $S^1\subset T^2$ bounds a disc, then $\H _2(T^2,S^1)=\Z $ and $0\neq \partial _*:H_2(T^2,S^1)\to H_1(S^1)$ show that $\partial _*$ does not restrict to a map $\H _2(T^2,S^1)\to \H _1(S^1)$, i.e.~the sequence in theorem \ref{main_thm} is not defined. 
\item[c)] Theorem \ref{sub_thm} does not hold for $\Z _2$--coefficients if the submanifold has a nontrivial finite fundamental group: The pair $\R P^{n-1}\subset\R P^{n}$ satisfies $\R P^n/\R P^{n-1}\cong S^n$ which yields $\H _n(\R P^n,\R P^{n-1};\Z _2)=\Z _2$ if $n\geq 2$. Hence,
\[
\qquad 0\longrightarrow \H _n(\R P^n;\Z _2)\longrightarrow \H _n(\R P^n,\R P^{n-1};\Z _2)\stackrel{\partial _*=0}\longrightarrow \H _{n-1}(\R P^{n-1};\Z _2)
\]
can not be exact by remark \ref{rem32} if $n\geq 2$ is even.
\end{enumerate}
\end{example}

\section{Relative K--area and the APS index theorem}
Let $(M^n,g)$ be a connected compact Riemannian manifold with boundary $\partial M$ such that $g=dt^2\oplus g_{\partial M}$ holds near $\partial M$, and let $\Di  ^+:\Gamma (\bundle{S}^+)\to \Gamma (\bundle{S}^-)$ be a complex Dirac operator on $M$ which satisfies the APS boundary conditions (cf.~\cite{APS,APS2,Go2}). The Atiyah--Patodi--Singer index theorem says
\[
\mathrm{ind}_\mathrm{APS}(\Di  ^+)+\frac{h+\eta }{2}=\int _M\alpha _0
\]
where $\eta =\eta (0)$ is the $\eta $--invariant of the associated boundary operator $\Di  _{\partial M}$, $h=\dim \ker (\Di  _{\partial M})$ and $\alpha _0\in \Omega ^n(M) $ is a differential form which depends on the Riemannian metric $g$ and the Dirac bundle $\bundle{S}$:
\[
\alpha _0=\widehat{A}(TM,\nabla ^{TM})\cdot \mathrm{ch}(\bundle{S}/\spinor ,\nabla ^\bundle{S}).
\]
Let $M$ be a spin manifold and $\spinor M$ be the complex spinor bundle. Suppose that $(\bundle{E},\nabla ^{\bundle{E}})$ is a Hermitian vector bundle on $M$ which is trivial on $[0,\epsilon )\times \partial M$, in fact $\nabla ^\bundle{E}$ coincides with the canonical trivial connection for a trivialization on $[0,\epsilon )\times \partial M$. Then $\bundle{S}=\spinor M\otimes \bundle{E}$ is a Dirac bundle on $M$ which satisfies the APS boundary conditions and $\widetilde{\mathrm{ch}}(\bundle{E}):=\mathrm{ch}(\bundle{E})-\mathrm{rk}(\bundle{E})\in H^{2*}(M,\partial M;\Q )$ yields
\[
\alpha _0=\mathrm{rk}(\bundle{E})\cdot \widehat{A}_{n/4}(TM,\nabla ^{TM})+\widehat{A}(TM)\smallsmile \widetilde{\mathrm{ch}}(\bundle{E})
\]
where $\widehat{A}(TM)\in H^{4*}(M;\Q )$ is the $\widehat{A}$--class determined by the Pontryagin classes of $TM$ (independent of the Riemannian metric) and $\widehat{A}_{n/4}(TM,\nabla ^{TM})\in \Omega ^n(M)$ is the differential form induced by the $\widehat{A}_{n/4}$--polynomial in the curvature of $\nabla ^{TM}$ with $\widehat{A}_{n/4}=0$ if $n/4$ is not an integer. Note that $\widehat{A}(TM)\smallsmile \widetilde{\mathrm{ch}}(\bundle{E})\in H^{2*}(M,\partial M;\Q )$ is the relative cup product. If $\dirac _{\partial M}$ is the spin Dirac operator associated to the complex spinor bundle $\spinor (\partial M)=(\spinor ^+M)_{|\partial M}$, the boundary operator $\Di _{\partial M}$ is given by $m\cdot \dirac _{\partial M}$ where $m$ denotes the rank of $\bundle{E}$. Hence, the Dirac operator $\Di ^+:\Gamma (\spinor ^+M\otimes \bundle{E})\to \Gamma (\spinor ^-M\otimes \bundle{E})$ has  index
\[
\mathrm{ind}_\mathrm{APS}(\Di ^+)=\left< \widetilde{\mathrm{ch}}(\bundle{E}),\widehat{A}(TM)\cap [M]\right> -m\left( \frac{\eta +h}{2}-\int _M\widehat{A}_{n/4}(TM,\nabla ^{TM})\right) 
\]
where $\eta =\eta (\dirac _{\partial M})$, $h=\dim \ker \dirac _{\partial M}$ and $\cap [M]:H^{n-k}(M;\Q )\to H_k(M,\partial M;\Q )$ is the Poincar\'e --Lefschetz duality map, i.e.~$\widehat{A}(TM)\cap [M]\in H_*(M,\partial M;\Q )$. Suppose that $(M,g)$ has positive scalar curvature and the bundle $\bundle{E}\to M$  has curvature $\| R^\bundle{E}\| _g<\frac{\min \mathrm{scal}_g}{2n(n-1)}$, then $\mathrm{ind}_\mathrm{APS}(\Di ^+)=0$ by the same arguments as in \cite[sec.~3]{APS,APS2}: If $\mathrm{ind}_\mathrm{APS}(\Di ^+)\neq 0$, then there is a nontrivial spinor $\psi \in \Gamma (\spinor \hat M\otimes \bundle{E})$ on the complete manifold $\hat M:=M\cup (-\infty ,0]\times \partial M$ with $\Di ^+\psi =0$ (if $\mathrm{ind}_{\mathrm{APS}}(\Di ^+)<0$ reverse orientation). Since $\psi $ decays exponentially at infinity, we conclude $\int _{\hat M}\left< \nabla ^*\nabla \psi ,\psi \right> =\int _{\hat M}\left< \nabla \psi ,\nabla \psi \right> $. Thus, the integrated version of the Lichnerowicz formula
\[
\Di ^2=\nabla ^*\nabla +\frac{1}{4}\mathrm{scal}_g+\frak{R}^\bundle{E}
\]
yields a contradiction to $\Di ^+\psi =0$ if $\min \mathrm{scal}_g>4\| \frak{R}^\bundle{E}\| _g\geq 2n(n-1)\| R^\bundle{E}\| _g$. Using the bundles $\bundle{E}$ and $\bundle{E}\oplus \C $ we conclude
\[
\int _M\widehat{A}_{n/4}(TM,\nabla ^{TM})=\frac{\eta +h}{2}\qquad \text{and}\qquad \left< \widetilde{\mathrm{ch}}(\bundle{E}),\widehat{A}(TM)\cap [M]\right> =0
\]
if $\| R^{(\bundle{E},\nabla )}\| _g< \frac{\min \mathrm{scal}_g}{2n(n-1)}$. Thus, if $(M,g)$ is an even dimensional Riemannian spin manifold of positive scalar curvature whose metric is a product near the boundary, then $\widehat{A}(TM)\cap [M]\in H_{2*}(M,\partial M;\Q )$ has finite K--area by the relative version of \cite[prop.~3.1]{List10}. Moreover, $(\partial M,g_{|})$ has positive scalar curvature which implies $h=\dim \ker \dirac _{\partial M}=0$. This completes the proof of theorem \ref{thm_aps_scalar} if $n=\dim M$ is even. If $n$ is odd, apply the even case to the manifold $M\times S^1$. Note that the constant in \cite[prop.~3.1]{List10} and the above arguments yield the rough estimate
\[
\ke (M_g^n,\partial M;\widehat{A}(TM)\cap [M])\leq \frac{n(n+1)^3}{2\cdot \min \, \mathrm{scal}_g} 
\]
if $g$ is a product near the boundary and $\mathrm{scal}_g>0$.
\begin{rem}
The concept of enlargeability introduced by Gromov and Lawson generalizes to compact manifolds with boundary as follows. Let $(S^n,g_0)$ be the standard sphere and $(M^n,g)$ be a compact manifold. Then $M$ is said to be \emph{compactly $\Lambda ^2$--enlargeable} if for all $\epsilon >0$, there is a finite Riemannian covering $(\tilde M,\tilde g)\to (M,g)$ which is trivial at the boundary and a smooth map $f:\tilde M\to S^n$ such that $P=f(\partial \tilde M)$ is a finite set of points in $S^n$, $f_*:H_n(\tilde M,\partial \tilde M)\to H_n(S^n,P)$ is nontrivial and $f^*g_0\leq \epsilon \cdot \tilde g$ holds on $\Lambda ^2T\tilde M$. The proof of \cite[prop.~4.1]{List10} generalizes easily to manifolds with boundary. In fact, if a spin manifold $M$ is compactly $\Lambda ^2$--enlargeable, then $\H _n(M,\partial M)\neq H_n(M,\partial M)$ implies that $M$ does not admit a metric of positive scalar curvature which is a product near the boundary.
\end{rem}
\section{An Uhlenbeck type result}
The theorem below is not vital for the main results in the introduction, however it is an interesting supplement to theorem \ref{thm_uhl}. The constant $C$ in theorem \ref{thm_uhl} can not be better than the constant
\[
\mlambda=\mlambda(g):=\sup _{0\neq \alpha \in \Omega ^1M\atop d^*\alpha =0}\frac{\| \alpha \| _g }{\|d\alpha \| _g } 
\] 
which is finite if $H^1(M;\R )=0$. In fact, in the abelian case $m=\mathrm{rk}(\bundle{E})=1$ theorem \ref{thm_uhl} holds for $C=\mlambda $ and arbitrary $\epsilon >0$. In the nonabelian case however, $\epsilon $ must be small and the best constant is $2\mlambda $. We notice that  the $\epsilon $ in the following theorem may be much smaller than the $\epsilon $ in theorem \ref{thm_uhl}.
\begin{thm}
\label{thm_uhlen}
Let $(M^n,g)$ be a simply connected closed Riemannian manifold, then there is a constant $\epsilon =\epsilon (M,g)>0$ with the following property: If $(\bundle{E},\nabla )\to M$ is a Hermitian vector bundle with curvature $\| R^\nabla \| _g<\epsilon $, then there is a unitary trivialization $\Psi :\bundle{E}\to M\times \C ^m$ such that $A:=\Psi \nabla \Psi ^{-1}-d\in \Omega ^1(M)\otimes \frak{u}(m)$ satisfies $d^*A=0$,
\[
\| A\| _g\leq 2\mlambda \cdot \| R^\nabla \| _g\qquad \text{and}\qquad \| dA\| _g\leq 2 \| R^\nabla \| _g.
\]
Moreover, $\Psi $ is unique up to multiplication by a unitary matrix.
\end{thm}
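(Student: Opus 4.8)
The plan is to upgrade the exponential gauge of Theorem~\ref{thm_uhl} to a Coulomb gauge by a continuity argument, reading off the two norm bounds formally at the end. First I would invoke Theorem~\ref{thm_uhl}: once $\epsilon$ is small the bundle is trivial, so I fix the trivialization $\Psi_0$ it produces, set $A_0:=\Psi_0\nabla\Psi_0^{-1}-d\in\Omega^1(M)\otimes\mathfrak u(m)$, and record $\|A_0\|_g\le C\|R^\nabla\|_g$ together with $R^\nabla=dA_0+A_0\wedge A_0$. Since $M$ is connected, any other unitary trivialization is $\Psi=s\Psi_0$ with $s:M\to\mathrm U(m)$ and has connection form $A=sA_0s^{-1}-(ds)s^{-1}$, of the same comass $\|sA_0s^{-1}\|_g=\|A_0\|_g$; so it suffices to find $s$ with $d^*A=0$ and $\|A\|_g\le\delta_0$ for a small constant $\delta_0=\delta_0(g)$. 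The two estimates are then automatic: $d^*A=0$ gives $\|A\|_g\le\mlambda\|dA\|_g$ (apply the definition of $\mlambda$ to the scalar one-forms $\langle A(\,\cdot\,)\xi,\zeta\rangle$), while $dA=R^\nabla-A\wedge A$ gives $\|dA\|_g\le\|R^\nabla\|_g+2\|A\|_g^2$; combining and using $\|A\|_g\le\delta_0$ yields $\|A\|_g\le 2\mlambda\|R^\nabla\|_g$ once $4\mlambda\delta_0\le1$, and then $\|dA\|_g\le2\|R^\nabla\|_g$ for $\epsilon$ small. In particular the bound $\|A\|_g\le\delta_0$ is \emph{self-improving}, which is what lets the continuity method close up.

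For the existence of $s$ I would run the continuity method along the connections $\nabla^t=d+tA_0$, $t\in[0,1]$ (in the fixed trivialization $\Psi_0$); their curvatures are $t\,dA_0+t^2A_0\wedge A_0$, so $\|R^{\nabla^t}\|_g\le 3\|R^\nabla\|_g$ for $\epsilon$ small. Let $S\subseteq[0,1]$ consist of those $t$ for which some unitary $s_t$ makes $A^t:=s_t(tA_0)s_t^{-1}-(ds_t)s_t^{-1}$ satisfy $d^*A^t=0$ and $\|A^t\|_g\le\delta_0$; clearly $0\in S$. For openness at $t_0\in S$, writing a correction as $s=e^u s_{t_0}$, the Coulomb condition becomes a semilinear equation $\Delta u=d^*(\text{terms bilinear in }u,du\text{ and the small connection form})$ for a mean-zero $\mathfrak u(m)$-valued $u$, where $\Delta=d^*d$ is an isomorphism on mean-zero functions because $M$ is connected — its kernel, the constant skew-Hermitian matrices, being exactly the residual gauge freedom, and the elliptic constant being the scalar one, hence independent of $\mathrm{rk}(\bundle E)$; crucially the term that is merely first order in the connection form drops since $d^*A^{t_0}=0$, so the linearization is $\Delta$ plus an $O(\|A^{t_0}\|_g)$ perturbation, and a contraction argument solves it for $t$ near $t_0$, with $\|A^t\|_g\le\delta_0$ guaranteed by continuity and the self-improving estimate. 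For closedness, if $t_n\to t_\infty$ the self-improving estimate gives $\|A^{t_n}\|_g\le 6\mlambda\|R^\nabla\|_g<\delta_0$, and $dA^{t_n}=R^{\nabla^{t_n}}-A^{t_n}\wedge A^{t_n}$ is bounded in $L^\infty$; since $d^*A^{t_n}=0$ and $H^1(M;\R)=0$, the elliptic estimate for $d+d^*$ bounds $A^{t_n}$ in $W^{1,p}$ for every $p$, a subsequence converges in $C^{0,\alpha}$, and with it the $s_{t_n}$ (pinned up to a constant by $s_{t_n}(t_nA_0)s_{t_n}^{-1}-(ds_{t_n})s_{t_n}^{-1}=A^{t_n}$), so $t_\infty\in S$. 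Hence $S=[0,1]$, and $t=1$ delivers the required trivialization.

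For uniqueness, if $\Psi,\Psi'$ both satisfy the conclusion and $\Psi'=s\Psi$, I would look at $\omega:=(ds)s^{-1}=sAs^{-1}-A'$, so $\|\omega\|_g\le\|A\|_g+\|A'\|_g\le 2\delta_0$, $d\omega=-\omega\wedge\omega$, and $d^*\omega=d^*(sAs^{-1})$ whose only term not containing $ds=\omega s$ is $s(d^*A)s^{-1}=0$, giving $\|d^*\omega\|_g\le c\|\omega\|_g\|A\|_g$. Then the elliptic estimate for $1$-forms (again using $H^1(M;\R)=0$) yields $\|\omega\|_g\lesssim\|d\omega\|_g+\|d^*\omega\|_g\lesssim\|\omega\|_g(\|\omega\|_g+\|A\|_g)$, which forces $\omega=0$ for $\delta_0$ small; hence $ds=0$ and $s$ is a constant unitary matrix, as claimed.

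The hard part will be the closedness step of the continuity argument: it requires combining the self-improving a priori bound — which uses only the $L^\infty$ information that Theorem~\ref{thm_uhl} actually provides, and no control whatsoever of $d^*A_0$ — with uniform, rank-independent elliptic estimates for $\Delta$ and $d+d^*$ on $(M,g)$, together with the compactness needed to pass the gauge transformations to the limit. This is also where simple connectivity of $M$ is essential, through $H^1(M;\R)=0$, which both makes $\mlambda(g)$ finite and renders the operators above invertible on the relevant complements; abelian rank one is special precisely because there the nonlinear $A\wedge A$ term is absent, so the construction works for arbitrary $\epsilon$ and with the sharp constant $\mlambda$ in place of $2\mlambda$.
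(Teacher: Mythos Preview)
Your proof is correct and follows the same overall strategy as the paper: start from the trivialization of Theorem~\ref{thm_uhl}, derive the self-improving a priori estimate (the paper isolates this as Lemma~\ref{lem62}, with exactly your threshold $\delta_0=\tfrac{1}{4\mlambda}$ and the same diagonalization trick behind your inequality $\|A\|_g\le\mlambda\|dA\|_g$), and then gauge-fix to Coulomb gauge by a perturbative argument whose linearization is $d^*d$ on mean-zero $\frak u(m)$-valued functions. The only genuine difference is in how the gauge-fixing step is organized. You run an explicit continuity method along $t\mapsto tA_0$ with separate openness (contraction) and closedness (elliptic compactness for $d+d^*$) arguments; the paper instead applies the implicit function theorem to $F(u,A)=d^*(e^{-u}de^u+e^{-u}Ae^u)$ in Sobolev spaces $W^{k,2}$, $k>\tfrac n2+1$, to obtain a gauge-fixing map $A\mapsto h(A)$ on $\{\|A\|_g<\xi\}$ directly, and then uses that $\frak A_\xi=\{A:\|A\|_g<\xi,\ \|R^{d+A}\|_g<\tfrac{1}{8\mlambda^2}\}$ is connected together with the ``forbidden value'' $\|\tilde A\|_g=\tfrac{1}{4\mlambda}$ (excluded by Lemma~\ref{lem62}) to conclude $\|\tilde A\|_g<\tfrac{1}{4\mlambda}$. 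This packaging lets the paper avoid writing out a closedness/compactness step; your route is closer to Uhlenbeck's original argument and makes the rank-independence of the elliptic constants more explicitly visible.
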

\begin{lem}
\label{lem62}
Let $(M,g)$ be a closed Riemannian manifold with $H^1(M;\R )=\{ 0\}$. If $A\in \Omega ^1(M)\otimes \frak{u}(m)$ satisfies $d^*A=0$, $\| A\| _g\leq \frac{1}{4\mlambda } $ and $\| R^{d+A}\| _g< \frac{1}{8\mlambda ^2}$, then
\[
\| A\| _{g}\leq 2\mlambda \cdot \| R^{d+A}\| _{g}\qquad \text{and}\qquad \| dA\| _{g}\leq 2\cdot \| R^{d+A}\| _{g}.
\]
\end{lem}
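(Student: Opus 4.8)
The plan is to run a short bootstrap argument built entirely on the curvature identity $R^{d+A}=dA+A\wedge A$, the definition of $\mlambda$, and the a priori bound $\|A\|_g\leq\frac1{4\mlambda}$; there is no genuine analysis here, only a quadratic feedback inequality that the smallness hypothesis is tailored to close. First I would record the two ingredients. Since $H^1(M;\R)=\{0\}$ the constant $\mlambda=\mlambda(g)$ is finite, and because $A$ satisfies the Coulomb condition $d^*A=0$, the very definition of $\mlambda$ gives $\|A\|_g\leq\mlambda\,\|dA\|_g$. Second, for skew--Hermitian endomorphisms one has $|[A(v_1),A(v_2)]|_{op}\leq 2|A(v_1)|_{op}\,|A(v_2)|_{op}$, so the comass norm obeys $\|A\wedge A\|_g\leq 2\|A\|_g^2$ --- exactly the estimate already used in the proof of Lemma \ref{lem41}.

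Next I would combine these. From $dA=R^{d+A}-A\wedge A$ and the triangle inequality, $\|dA\|_g\leq\|R^{d+A}\|_g+2\|A\|_g^2$, and therefore
\[
\|A\|_g\;\leq\;\mlambda\,\|dA\|_g\;\leq\;\mlambda\,\|R^{d+A}\|_g+2\mlambda\,\|A\|_g^2 .
\]
Now the hypothesis $\|A\|_g\leq\frac1{4\mlambda}$ yields $2\mlambda\,\|A\|_g^2\leq\frac12\|A\|_g$, so the quadratic term is absorbed into the left side, giving $\frac12\|A\|_g\leq\mlambda\,\|R^{d+A}\|_g$, i.e.\ the first claimed inequality $\|A\|_g\leq 2\mlambda\,\|R^{d+A}\|_g$. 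Feeding this back into $\|dA\|_g\leq\|R^{d+A}\|_g+2\|A\|_g^2$ and using both $\|A\|_g\leq 2\mlambda\,\|R^{d+A}\|_g$ and $\|A\|_g\leq\frac1{4\mlambda}$ we get $2\|A\|_g^2\leq 2\bigl(2\mlambda\,\|R^{d+A}\|_g\bigr)\bigl(\tfrac1{4\mlambda}\bigr)=\|R^{d+A}\|_g$, hence $\|dA\|_g\leq 2\|R^{d+A}\|_g$, the second claimed inequality.

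Finally I would comment on the role of the curvature hypothesis $\|R^{d+A}\|_g<\frac1{8\mlambda^2}$: it is not actually consumed in the argument above, but it makes the two smallness assumptions mutually consistent, since the conclusion $\|A\|_g\leq 2\mlambda\,\|R^{d+A}\|_g$ together with it forces $\|A\|_g<\frac1{4\mlambda}$, which is precisely the regime in which the lemma is meant to be applied. The only step that requires any care is the quadratic feedback in the displayed inequality, and there is no real obstacle there because the a priori bound $\|A\|_g\leq\frac1{4\mlambda}$ is chosen exactly to dominate the term $2\mlambda\,\|A\|_g^2$; the genuinely hard point --- producing a unitary trivialization realizing the Coulomb gauge with small $A$ in the first place --- is a separate matter belonging to the proof of Theorem \ref{thm_uhlen}, not to this lemma.
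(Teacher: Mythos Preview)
Your overall strategy matches the paper's, and the absorption step $2\mlambda\|A\|_g^2\leq\tfrac12\|A\|_g$ is a clean alternative to the paper's explicit analysis of the zeros of the quadratic $2\mlambda x^2-x+\mlambda\|R^{d+A}\|_g$; the two are equivalent. One genuine gap, however, is your first inequality $\|A\|_g\leq\mlambda\|dA\|_g$. You invoke ``the very definition of $\mlambda$'', but $\mlambda$ is defined as a supremum over \emph{scalar} coclosed $1$--forms, while $A$ takes values in $\frak{u}(m)$ and $\|\cdot\|_g$ is the fibrewise operator norm. It is not automatic that the scalar constant $\mlambda$ controls the operator--norm quantity; this is exactly the point where the paper does additional work.

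The paper's fix is short and is the missing ingredient in your argument: pick $x\in M$ and $v\in T_xM$ with $|v|=1$ where $|A(v)|_{op}$ attains its maximum, choose a constant $S\in\mathrm{U}(m)$ diagonalizing the skew--Hermitian matrix $A(v)$ with top eigenvalue of maximal modulus, and set $\tilde A=S^{-1}AS$. Then $\tilde A_{11}$ is a coclosed (purely imaginary, hence essentially real) scalar $1$--form with $\|\tilde A_{11}\|_g=\|A\|_g$ and $\|d\tilde A_{11}\|_g\leq\|dA\|_g$, so the scalar definition of $\mlambda$ yields $\|A\|_g\leq\mlambda\|dA\|_g$. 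Once you insert this reduction, your bootstrap closes exactly as written. Your final remark that the hypothesis $\|R^{d+A}\|_g<\tfrac{1}{8\mlambda^2}$ is not consumed in the estimate itself is correct; in the paper it serves to guarantee that the quadratic has two real roots, and in the application (Theorem~\ref{thm_uhlen}) it is used, together with connectedness, to trap $\|\tilde A\|_g$ strictly below $\tfrac{1}{4\mlambda}$.
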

\begin{proof}
Let $|A(v)|_{op}$ be maximal for $v\in T_x M$, then there is some $S\in \mathrm{U}(m)$ such that $\tilde A:=S^{-1}AS:TM\to \frak{u}(m)$ is diagonal for $v$ with $|\tilde A(v)_{11}|_{op}\geq |\tilde A(v)_{ii}|_{op}$ for all $i$ (here $\tilde A(w):=S^{-1}A(w)S$ for all $w\in TM$, $\tilde A(v)\in \frak{u}(m)$ is diagonal). Hence, $\tilde A _{11}(w):=\tilde A(w)_{11}$ defines a 1--form with $d^*\tilde A_{11}=0$ which means
\[
\| A\| _{g}=\| \tilde A\| _{g} = \| \tilde A_{11}\| _{g}\leq \mlambda \cdot \| d\tilde A_{11}\| _{g}\leq \mlambda \cdot \| d\tilde A\| _{g}=\mlambda \cdot \| dA\| _{g}.
\]
Note $d(S^{-1}AS)=S^{-1}(dA)S$ and $d(\tilde A_{ij})=(d\tilde A)_{ij}$. This yields
\[
\| A\| _{g}\leq \mlambda \cdot \| dA\| _{g}= \mlambda \cdot \| R^{d+A}-A\wedge A\| _{g}\leq \mlambda \cdot \| R^{d+A}\| _{g}+2\mlambda \cdot \| A\| _{g}^2
\]
The polynomial $2\mlambda x^2-x+\mlambda \| R^{d+A}\| _{g} $ has two different zeros for $\| R^{d+A}\| _{g} <\frac{1}{8\mlambda ^2}$ with minimal point at $x=\frac{1}{4\mlambda }$. Hence, if $x\leq 2\mlambda x^2+\mlambda \| R^{d+A}\| _{g} $ and $x\leq \frac{1}{4\mlambda }$, $x$ has to be smaller than the first zero of the polynomial. In fact, $\| A\| _{g}\leq \frac{1}{4\mlambda }$ implies
\[
\| A\| _{g}\leq \frac{1}{4\mlambda }\left( 1-\sqrt{1-8\mlambda ^2\|R^{d+A}\| _{g}}\right) \leq 2\mlambda \cdot \| R^{d+A}\| _{g} 
\] 
\[
\| dA\| _{g}\leq \| R^{d+A}\| _{g}+2\| A\| _{g}^2\leq \| R^{d+A}\| _{g}+\frac{1}{2\mlambda }\| A\| _{g}\leq 2\| R^{d+A}\|_{g}
\]
here we use $-\sqrt{1-y}\leq -1+y$ for $y\in [0,1]$.
\end{proof}
\begin{rem}
\[
\frak{A}_\xi :=\left\{ A\in \Omega ^1(M)\otimes \frak{u}(m)\ |\ \ \| A\| _g< \xi ,\ \| R^{d+A}\| _g<\frac{1}{8\mlambda ^2}\right\} 
\]
is contractible and hence, connected for all $\xi \leq\frac{1}{4\mlambda }$: If $t\in (0,1]$ and $A\in \frak{A}_\xi $, then 
\[
\begin{split}
\| R^{d+tA}\| _g&=\| tR^{d+A}+t(t-1)A\wedge A\| _g\leq t\| R^{d+A}\| _g+2t(1-t)\| A\| ^2_g< \frac{2t-t^2}{8\mlambda ^2}
\end{split}
\]
and $\| R^d\| _g=0$ yield $\| R^{d+tA}\| _g<\frac{1}{8\mlambda ^2}$ for all $t\in [0,1]$.
\end{rem}

We follow the proofs of \cite[theorem 2.5]{Uhl2} respectively \cite[lemma 2.7]{Uhl1}. Choose the constant $\kappa <+\infty $ with $\| df\| _g\leq \kappa \cdot \| d^*df\| _\infty $ for all smooth $f:M\to \R $. Consider the smooth map
\[
\begin{split}
F:W^{k+1,2 }_0(M,\frak{u}(m))&\times W^{k,2}(M,T^*M\otimes \frak{u}(m))\to W^{k-1,2 }_0(M,\frak{u}(m))\ ,\\
 (u,A)&\mapsto d^*(e^{-u}de^u+e^{-u}Ae^{u})
\end{split}
\] 
for $k>\frac{n}{2}+1$ where $W^{k,2 }(M,.)$ is the usual Sobolev space with respect to the Killing form on $\frak{u}(m)$ and $u\in W_0^{k,2 }(M,\frak{u}(m))$ means additionally $\int u=0$. The linearization of $F$ at $(0,A)$ is given by 
\[
(u,B)\mapsto d^*(du-uA+Au+B )
\]
and the $L^2$--adjoint $(dF)^*$ of the operator $u\mapsto dF_{(0,A)}(u,0)$ satisfies
\[
(dF)^*\psi =d^*d\psi -\sum _{i=1}^n[A(e_i),d\psi (e_i)]. 
\]
where $e_1,\ldots, e_n\in TM$ denotes an orthonormal basis. Because $\int \psi =0$, the inequality
\[
\| (dF)^*\psi \| _{\infty }\geq \| d^*d\psi \| _{\infty }-2n\| d\psi \| _{g }\cdot \| A\| _{g}\geq \frac{1}{\mlambda '}\left( \frac{1}{\kappa }-2n\| A\| _{g}\right) \| \psi \| _{\infty }
\]
proves that $u\mapsto dF_{(0,A)}(u,0)$ is an isomorphism if $\frac{1}\kappa >2n \| A\| _g$. Here, the inequality $\| d^*d\psi \| _\infty \geq \frac{1}{\kappa }\| d\psi \| _g$ follows by the same argument as in the proof of the above lemma and $\| d\psi \| _g\geq \frac{1}{\mlambda'}\| \psi \| _\infty $ for a constant $\mlambda'>0$ is obvious if $\int \psi =0$. Note that $W^{k+1,2}_0\ni u\mapsto dF_{(0,A)}(u,0)\in W^{k-1,2}_0$ is a Fredholm operator with trivial index, because $dF_{(0,A)}(.,0)-d^*d$ is compact for smooth $A$ and $k>\frac{n}{2}+1$. Hence, the implicit function theorem provides a smooth map
\[
h:W^{k,2}(M,T^*M\otimes \frak{u}(m))\to W^{k+1,2 }_0(M,\frak{u}(m))\ ,\ A\mapsto h(A)
\]
with $F(h(A),A)=0$ if $\| A\| _g<\frac{1}{2n\kappa }$. If $A$ is smooth, $h(A)$ is smooth and the map
\[
A\mapsto \tilde A:=e^{-h(A)}(de^{h(A)}+Ae^{h(A)})
\]
depends smooth on $A$ with $d^*\tilde A=0$. Thus, assuming $\| R^{d+A}\| _g=\| R^{d+\tilde A}\| _g<\frac{1}{8\mlambda ^2}$, the value $\| \tilde A\| _g=\frac{1}{4\mlambda }$ is impossible by lemma \ref{lem62} which means $\| \tilde A\| _g<\frac{1}{4\mlambda }$ if $\| A\| _g<\xi :=\min \{ \frac{1}{2n\kappa },\frac{1}{4\mlambda }\} $. Here we use that the image of the map $\frak{A}_\xi \ni A\mapsto \tilde A$ must be connected and $A=0\mapsto \tilde A=0$. Now we  use theorem \ref{thm_uhl} to complete the proof. Let $C$ and $\epsilon $ be the constants from theorem \ref{thm_uhl} and set $\tilde \epsilon :=\min \left\{ \epsilon ,\frac{1}{8\mlambda ^2},\frac{\xi }{C }\right\} $. Suppose that $(\bundle{E},\nabla )$ is a Hermitian bundle on $M$ with curvature $\| R^\nabla \| _g<\tilde \epsilon $, then theorem \ref{thm_uhl} provides a trivialization $\Psi :\bundle{E}\to M\times \C ^m$ with $\| A\| _g\leq C\cdot \| R^\nabla \| _g<\xi  $ for $A=\Psi \nabla \Psi ^{-1}-d$. Consider the trivialization $\tilde \Psi =e^{-h(A)}\Psi $, then $\tilde A=\tilde \Psi \nabla \tilde \Psi ^{-1}-d$ is given by $\tilde A=e^{-h(A)}(de^{h(A)}+Ae^{h(A)})$. Hence, $\tilde A$ satisfies $d^*\tilde A=0$, $\| \tilde A\| _g<\frac{1}{4\mlambda }$ and $\| R^{d+\tilde A}\| _g=\| R^\nabla \| _g<\frac{1}{8\mlambda ^2}$. Lemma \ref{lem62} completes the proof of theorem \ref{thm_uhlen} with $\tilde \epsilon \sim \epsilon  $ and $\tilde \Psi \sim \Psi $. Uniqueness of $\tilde \Psi $ follows analogously.

\bibliographystyle{abbrv}
\bibliography{/home/benutzer/math/bib/bibliothek}

\end{document}